\documentclass[a4paper,11pt]{article}
\usepackage{latexsym}
\usepackage{amsfonts}
\usepackage{amsthm}
\usepackage{amsmath}
\usepackage{amssymb}
\usepackage{amscd}
\usepackage[all,cmtip]{xy}
\usepackage{rotating}
\usepackage{scalefnt}
\usepackage[english]{babel}
\usepackage{makeidx}
\usepackage[latin1]{inputenc}
\usepackage{fancyhdr}
\usepackage{diagrams}
\newtheorem{prop}{Proposition} 
\newtheorem{lemma}{Lemma} 
\newtheorem{teo}{Theorem}

\theoremstyle{remark}
\newtheorem{definition}{Definition}
\newtheorem{notation}{Notation}
\newtheorem{remark}{Remark}
\newtheorem{example}{Example}
\textwidth=500pt
\hoffset=-70pt
\voffset=-30pt
\textheight=700pt
\topmargin=-20pt
\begin{document}

\title{\textsc{Splitting in the $K$-theory localization sequence of number fields}}
\author{Luca Caputo\footnote{Partially supported by an IRCSET fellowship.}}
\date{\today}
\maketitle

\textsc{Abstract -} 
\begin{small}
Let $p$ be a rational prime and let $F$ be a number field. Then, for each $i\geq1$, there is an exact localization sequence 
$$0\longrightarrow K_{2i}(\mathcal{O}_F)_p\longrightarrow K_{2i}(F)_p \stackrel{\partial_{F,i}}{\longrightarrow} \bigoplus_{v\nmid\infty } K_{2i-1}(k_v)_p\longrightarrow 0$$
where $K_j(R)$ denotes the $j$-th Quillen's $K$-theory group of the ring $R$. If $p$ is odd or $F$ is nonexceptional, we find necessary and sufficient conditions for this exact sequence to split: these conditions involve coinvariants of twisted $p$-parts of the $p$-class groups of certain subfields of the fields $F(\mu_{p^n})$ for $n\in\mathbb{N}$. We also compare our conditions with the weaker condition $W\!K^{\acute{e}t}_{2i}(F)=0$ and give some example.
\end{small}   \\

\section{Introduction and notation}
Throughout the paper, $p$ will denote a rational prime. For an abelian group $A$, set 
$$\mathrm{Div}(A)=\textrm{maximal divisible subgroup of $A$}$$
$$\mathrm{div}(A)=\{a\in A\,|\,\forall\, n\in \mathbb{N}\,\, \exists\, a_n\in A : a=na_n\}$$ 
Then $\mathrm{div}(A)$ is a subgroup of $A$ which is commonly called the subgroup of (infinitely) divisible elements or the subgroup of elements of infinite height of $A$. We denote by $A_p$ the $p$-primary part of $A$ and, for $n\in \mathbb{N}$, by $A[p^n]$ the subgroup of elements of $A$ whose order divides $p^n$.\\  

If $R$ is a ring and $j$ is a natural number, $K_j(R)$ denotes the $j$-th Quillen's $K$-group of $R$. Let $F$ be a number field and $i$ be a positive integer. Thanks to Soulé's results (see \cite{We1}, Theorem 4.6), Quillen's long exact localization sequence splits into isomorphisms $K_{2i+1}(\mathcal{O}_F)\cong K_{2i+1}(F)$ and short exact sequences of the form  
\begin{equation}\label{es}
0\longrightarrow K_{2i}(\mathcal{O}_F)\longrightarrow K_{2i}(F) \stackrel{\partial_{F,i}}{\longrightarrow} \bigoplus_{v\nmid \infty} K_{2i-1}(k_v)\longrightarrow 0
\end{equation}  
where $\mathcal{O}_F$ is the ring of integers of $F$, $k_v$ is the residue field of $F$ at $v$ and the direct sum is taken over the finite places of $F$. We recall that, thanks to Quillen's and Borel's results, $K_{2i-1}(k_v)$ is cyclic of order $|k_v|^i-1$ and $K_{2i}(\mathcal{O}_F)$ is a finite group: in particular we always have $\mathrm{Div}(K_{2i}(F))=0$ (but in general $\mathrm{div}(K_{2i}(F))$ may be nontrivial).\\ 
One can asks for conditions for the exact sequence in (\ref{es}) to split. As a motivation for this question we quote the following three results:
\begin{itemize}
  \item if $F=\mathbb{Q}$, Tate showed that the exact sequence (\ref{es}) splits (see \cite{Mi2}, Theorem 11.6);
	\item if $E$ is a rational function field in one variable over an arbitrary base field, then the localization sequence for $K_2(E)$ (which is completely analogous to (\ref{es})) always splits, thanks to a result of Milnor and Tate (see \cite{Mi1}, Theorem 2.3);
	\item if $E$ is a local field (i.e. a field complete with respect to a discrete valuation whose residue field is finite), then the localization sequence for $K_{2i}(E)$ (which again is completely analogous to (\ref{es})) always splits, thanks to a result of Soulé (see \cite{So}, Proposition 4).
\end{itemize}
Coming back to the case of a number field $F$, consider now the exact sequence which (\ref{es}) induces on $p$-primary parts, which we shall refer to as the localization sequence for $K_{2i}(F)_p$. The problem of the splitting of the localization sequence for $K_{2i}(F)_p$ was first studied by Banaszak: in one of his papers (see \cite{Ba}, Corollary 1) he claims that, if $p$ is odd, the localization sequence for $K_{2i}(F)_p$ splits if and only if $\mathrm{div}(K_{2i}(F))_p=0$ (in fact Banaszak's result is stated in terms of $W\!K_{2i}^{\acute{e}t}(F)$, see Proposition \ref{TBKKSH}). This is obviously a necessary condition, since both the right and the left terms of the localization sequence for $K_{2i}(F)_p$ have trivial subgroup of divisible elements. However the proof of sufficiency seems to be incomplete. It turns out that, for any $i\geq1$, there is a counterexample, namely there is a number field $F$ and a prime $p$ such that $\mathrm{div}(K_{2i}(F))_p=0$ but the localization sequence for $K_{2i}(F)_p$ does not split (see Example \ref{ce}). The counterexample is constructed using Theorem \ref{gh}, which is our main result and is described in Section \ref{mr}: under the assumption that $p$ is odd or $F$ is nonexceptional (see Definition \ref{nonexc}), the obstruction to the existence of a splitting for the localization sequence for $K_{2i}(F)_p$ are the $p$-parts of the $p$-class groups $Cl_{F_{i,n}}^{S_{i,n}}$ of certain subfields $F_{i,n}$ of the fields $F(\mu_{p^n})$, where $\mu_{p^n}$ denotes the group of roots of unity in an algebraic closure of $F$. The main idea of the proof of Theorem \ref{gh} is essentially to use a generalization of an exact sequence of Keune which builds on an earlier result of Tate (Proposition \ref{ke}). For instance, if $\mu_{p^n}\subseteq F$, a consequence of this exact sequence (which has already been used in \cite{Hu}) is an isomorphism
$$(Cl_F^S)_p\otimes \mu_{p^n}\cong \left(K_2(\mathcal{O}_F)\cap p^nK_2(F)\right)\Big/p^nK_2(\mathcal{O}_F)$$
and the triviality of the left-hand term is closely related to the splitting of the localization sequence for $K_{2}(F)_p$. Using a codescent argument, the hypothesis $\mu_{p^n}\subseteq F$ can be removed and this can be generalized to $K_{2i}(F)$ but one has to face the absence of a description of $K_{2i}(F)$ analogous to that of Matsumoto for $K_2(F)$. The way we found to overcome this difficulty is to work with continuous Galois cohomology instead of $K$-theory (the connection between the two theories is classical). This translation implies no further assumptions as explained in Section \ref{cgc}, which is essentially taken from \cite{Ba}, with some slight change and generalization.\\ 
The difference between our splitting criterion and the condition $\mathrm{div}(K_{2i}(F))_p=0$ (which is equivalent to the vanishing of the $i$-th étale wild kernel $W\!K_{2i}^{\acute{e}t}(F)$) is also analyzed at the end of Section \ref{mr}. Anyway the condition $\mathrm{div}(K_{2i}(F))_p=0$ is often necessary and sufficient for the localization sequence for $K_{2i}(F)_p$ to split (for example in the case where $F=\mathbb{Q}$, see Example \ref{qe}).\\

\noindent
\textbf{Acknowledgements}\\ 
I wish to thank Kevin Hutchinson, Jean-François Jaulent and Manfred Kolster for their suggestions and comments.

\section{Localization sequence for continuous Galois cohomology}\label{cgc}

In this section we are going to translate the problem of the existence of a splitting for the localization sequence for $K_{2i}(F)_p$ in cohomological terms. First of all we recall the following notion.

\begin{definition}\label{nonexc} Let $E$ be a field of characteristic other than $2$. Then $E$ is said to be \emph{nonexceptional} if $\mathrm{Gal}(E(\mu_{2^\infty})/E)$ has no element of order $2$ (and \emph{exceptional} otherwise). Here $\mu_{2^{\infty}}$ is the group of roots of unity whose order is a power of $2$ in an algebraic closure of $E$. 
\end{definition}

\begin{remark}
Note that nonexceptional fields have no embeddings in $\mathbb{R}$ (since $\mathbb{R}$ is exceptional and subfields of exceptional fields are exceptional).
\end{remark}

As in the preceding section, $p$ denotes a rational prime, $i$ is a positive integer and $F$ is a number field. We are interested in the localization sequence for $K_{2i}(F)_p$, namely  
\begin{equation}\label{esp}
0\longrightarrow K_{2i}(\mathcal{O}_F)_p\longrightarrow K_{2i}(F)_p \stackrel{\partial_{F,i}}{\longrightarrow} \bigoplus_{v\nmid p\infty} K_{2i-1}(k_v)_p\longrightarrow 0
\end{equation}   
which easily follows from (\ref{es}). This exact sequence has (at least conjecturally, see Remark \ref{ql}) a cohomological counterpart, as shown in the next proposition (which can essentially be found \cite{Ba}, Section I, §2-3). In this paper, cohomology is always continuous cohomology of profinite groups, in the sense of Tate (\cite{Ta2}). 

\begin{notation}\label{sv} 
If $T$ is a finite set of places of $F$, $G_{F,\,T}$ denotes the Galois group of the maximal extension $F_T$ of $F$ unramified ouside $T$ and $\mathcal{O}_F^T$ is the ring of $T$-integers of $F$. For a field $E$, $G_E$ denotes the Galois group of a separable algebraic closure of $E$ (and we use the convention $H^j(E,\,-):=H^j(G_E,\,-)$ for cohomology). Finally, $S_F^{(p)}=S$ denotes the set of primes above $p$ and $\infty$ in $F$.
\end{notation}

For a noetherian $\mathbb{Z}\textstyle{[\frac{1}{p}]}$-algebra $A$ and a natural number $j$, let $K_{j}^{\acute{e}t}(A)$ denote the $j$-th étale $K$-theory group of Dwyer and Friedlander (see \cite{DF}). For any finite set $T$ containing $S$, there are functorial maps 
$$K_{2i}(\mathcal{O}_F^T)_p\stackrel{\nu}{\longrightarrow}K_{2i}^{\acute{e}t}(\mathcal{O}_F^T)\quad \textrm{and}\quad K_{2i}^{\acute{e}t}(\mathcal{O}_F^T)\stackrel{\alpha}{\longrightarrow} H^{2}(G_{F,\,T},\,\mathbb{Z}_p(i+1))$$ 
We know that $\alpha$ is an isomorphism (see \cite{DF}, Remark 8.8). We set $ch=\alpha\circ\nu$ ($ch$ stands for Chern character). 

\begin{prop}[Banaszak]\label{ildiagramma}
Suppose that $p$ is odd or $F$ is nonexceptional. There is a commutative diagram with exact rows as
follows
$$
\xymatrix{
0\ar[r]&K_{2i}(\mathcal{O}_{F}^S)_p  \ar[d]^{ch} \ar[r] & K_{2i}(F)_p \ar[d] \ar[r]^{\partial_{F,i}} &\oplus_{v\nmid p\infty}\,K_{2i-1}(k_v)_p \ar[d]\ar[r]&0\\
0\ar[r]&H^2(G_{F,\,S},\,\mathbb{Z}_p(i+1))\ar[r] & H^{2}(F,\,\mathbb{Z}_p(i+1))_p  \ar[r]^{\partial_{F,i}^{c}} & \oplus_{v\nmid p\infty}\,H^{1}(k_v,\,\mathbb{Z}_p(i))\ar[r]&0\\}
$$
Moreover vertical maps in the above diagram are surjective and the rightmost is an isomorphism.
\end{prop}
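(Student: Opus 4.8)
The plan is to realise the two rows as localisation sequences, identify the three vertical maps, check that the two squares commute by naturality of Chern classes, and conclude with a four-lemma argument.

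\emph{The top row.} This is essentially (\ref{esp}); the only thing to note is that one may take the left-hand term to be $K_{2i}(\mathcal{O}_F^S)_p$ and let the sum run over $v\notin S$ (which is the same as $v\nmid p\infty$). This follows from Quillen's localisation sequence for the open immersion $\mathrm{Spec}\,\mathcal{O}_F^S\hookrightarrow\mathrm{Spec}\,\mathcal{O}_F$ together with d\'evissage, using that $K_{2i}(k_v)=0$ and that $K_{2i-1}(k_v)$ has order $|k_v|^i-1$, prime to $p$, for $v\mid p$. The exactness of (\ref{esp}) itself is Soul\'e's theorem (\cite{We1}, Theorem 4.6).

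\emph{The bottom row.} I would use the Gysin localisation sequence in continuous \'etale cohomology attached to $\mathrm{Spec}\,F\hookrightarrow\mathrm{Spec}\,\mathcal{O}_F^S$ with closed complement $\coprod_{v\notin S}\mathrm{Spec}\,k_v$, invoking absolute purity (each $(\mathcal{O}_F^S,k_v)$ is a regular pair of codimension $1$ of residue characteristic $\ne p$) and the standard identification of the \'etale cohomology of $\mathrm{Spec}\,\mathcal{O}_F^S$ with coefficients in $\mathbb{Z}_p(i+1)$ with the continuous cohomology of $G_{F,S}$ (cf. \cite{Ba}, Section I). Since $H^0(k_v,\mathbb{Z}_p(i))=\mathbb{Z}_p(i)^{G_{k_v}}=0$ (Frobenius acts by $|k_v|^i\ne1$), this sequence yields an exact sequence
$$0\to H^2(G_{F,S},\mathbb{Z}_p(i+1))\to H^2(F,\mathbb{Z}_p(i+1))\to\bigoplus_{v\notin S}H^1(k_v,\mathbb{Z}_p(i))\to H^3(G_{F,S},\mathbb{Z}_p(i+1)).$$
So the bottom row of the proposition is obtained once one knows $H^3(G_{F,S},\mathbb{Z}_p(i+1))=0$, and this holds because $\mathrm{cd}_p(G_{F,S})\le2$ under the hypothesis that $p$ is odd or $F$ is nonexceptional (so $F$ has no real place), the passage from torsion to $\mathbb{Z}_p$-coefficients being harmless since the relevant $H^2$'s are finite. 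This is the only place where the hypothesis of the proposition enters. Incidentally $H^2(F,\mathbb{Z}_p(i+1))$ lies between the finite group $H^2(G_{F,S},\mathbb{Z}_p(i+1))$ and the torsion group $\bigoplus_{v\notin S}H^1(k_v,\mathbb{Z}_p(i))$, hence is torsion, so $H^2(F,\mathbb{Z}_p(i+1))_p=H^2(F,\mathbb{Z}_p(i+1))$, in agreement with the notation of the statement.

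\emph{The vertical maps and conclusion.} The middle vertical is the $p$-adic \'etale Chern class map $K_{2i}(F)_p\to H^2(F,\mathbb{Z}_p(i+1))$, obtained by passing to the limit in the mod-$p^n$ Chern classes (which needs some care with $\mathbb{Z}_p$-coefficient $K$-theory), and the rightmost vertical is its analogue for the finite field $k_v$. The left square commutes by functoriality of $ch=\alpha\circ\nu$ in the ring, the lower horizontal being the restriction $H^2(G_{F,S},-)\to H^2(G_F,-)$. The right square commutes because the Chern class maps are compatible with localisation boundary maps (Soul\'e, Dwyer--Friedlander, Gillet); this last compatibility is the least formal ingredient, and the step I would expect to be the main obstacle in a fully self-contained account. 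The rightmost vertical is an isomorphism: $K_{2i-1}(k_v)_p$ is cyclic of order the $p$-part of $|k_v|^i-1$, $H^1(k_v,\mathbb{Z}_p(i))=\mathbb{Z}_p(i)_{G_{k_v}}\cong\mathbb{Z}_p/(|k_v|^i-1)\mathbb{Z}_p$ has the same order, and for finite fields Quillen's computation makes the Chern class map between them an isomorphism. The left vertical $ch$ is surjective because $\nu$ is surjective (Soul\'e, Dwyer--Friedlander; see \cite{DF}) and $\alpha$ is an isomorphism. Since both rows are exact, the leftmost vertical surjective and the rightmost an isomorphism, the four lemma forces the middle vertical to be surjective too, which gives everything claimed (this argument produces only surjectivity, not bijectivity, of the first two verticals; see Remark \ref{ql}).
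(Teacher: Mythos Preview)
Your argument is correct and reaches the same conclusions, but the route differs from the paper's in two noteworthy ways. First, the paper does not invoke a Gillet-type compatibility of Chern classes with boundary maps: instead it works at a finite level $T\supset S$, uses the surjectivity of $ch$ on $K_{2i}(\mathcal{O}_F^T)_p$ (Dwyer--Friedlander for $p$ odd, Rognes--Weibel for $p=2$), and \emph{defines} the rightmost vertical so that the square commutes; it then checks bijectivity by comparing orders. The paper even remarks parenthetically that this induced map need not coincide with the direct sum of the residue-field Chern characters, so your identification of the rightmost vertical with the finite-field Chern class is a genuine extra input (the compatibility you cite does exist, but the paper deliberately sidesteps it). Second, the paper obtains the bottom row not via Gysin/purity but by passing through $H^1(-,\mathbb{Q}_p/\mathbb{Z}_p(i+1))/\mathrm{Div}$ via the connecting isomorphism $\delta$ of Tate, taking the direct limit over $T$, and using Schneider's lemma that $\mathrm{Div}$ is independent of $T$; surjectivity of the middle vertical then comes for free (it is a limit of $ch$'s), whereas you recover it by a four-lemma argument. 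Your route is cleaner if one is willing to quote the Chern-class/boundary compatibility; the paper's route trades that black box for the $\mathbb{Q}_p/\mathbb{Z}_p$ detour and the finite-level surjectivity of $ch$.
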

\begin{proof}
Let $T$ be any finite set of primes $T$ containing $S$. For the rest of this proof, for any $j\in \mathbb{N}$, set 
$$H^j_T(-):=H^j(G_{F,\,T},\,-)\quad \textrm{and}\quad H^j(-):=H^j(F,\,-)$$
Then, for any $n\in \mathbb{N}$, there is a commutative diagram with exact rows 

$$
\xymatrix{
0\ar[r]&K_{2i}(\mathcal{O}_{F}^S)_p  \ar[d]^{ch} \ar[r] & K_{2i}(\mathcal{O}_{F}^T)_p \ar[d]^{ch} \ar[r]^{\partial_{F,i}} &\oplus_{v\in T\smallsetminus S}\,K_{2i-1}(k_v)_p \ar[d]\ar[r]&0\\
0\ar[r]&H^2_S(\mathbb{Z}_p(i+1))\ar[r] & H^{2}_T(\mathbb{Z}_p(i+1))  \ar[r]^{\partial_{F,i}^{c}} & \oplus_{v\in T\smallsetminus S}\,H^{1}(k_v,\,\mathbb{Z}_p(i))\ar[r]&0\\
0\ar[r]&H^1_{S}(\mathbb{Q}_p/\mathbb{Z}_p(i+1))/\mathrm{Div}\ar[u]^{\delta}\ar[r] & H^{1}_{T}(\mathbb{Q}_p/\mathbb{Z}_p(i+1))/\mathrm{Div}  \ar[u]^{\delta}\ar[r] & \oplus_{v\in T\smallsetminus S}\,H^{0}(k_v,\,\mathbb{Q}_p/\mathbb{Z}_p(i))\ar[u]^{\delta}\ar[r]&0\\}
$$
The definition and the exactness of the middle row can be found in \cite{So}, Section III and and \cite{RW}, Section 4. The upper rightmost vertical map is defined by the diagram (note that in general this does \emph{not} coincide with the direct sum of the residue fields Chern characters) and it is bijective. This follows from the surjectivity of the $ch$'s (see \cite{DF}, Theorem 8.7, for the case $p$ odd and \cite{RW}, Theorem 0.1, for the case $p=2$ and $F$ nonexceptional) together with the easy fact that, for any $v\in T\smallsetminus S$, $H^{1}(k_v,\,\mathbb{Z}_p(i))$ is cyclic of order $|k_v|^i-1$ and has therefore the same order as $K_{2i-1}(k_v)_p$ (thanks to Quillen's calculation). As for the maps denoted with $\delta$, they are connecting homomorphisms in the long exact cohomology sequence relative to the exact sequence 
$$0\to\mathbb{Z}_p(j)\to \mathbb{Q}_p(j)\to \mathbb{Q}_p/\mathbb{Z}_p(j)\to 0$$ 
($j=i,\,i+1$) and they are bijective (see \cite{Ta2}, Proposition 2.3).\\
Taking direct limits as $T\supset S$ grows, we get
\begin{equation}\label{qsz}
\xymatrix{
0\ar[r]&K_{2i}(\mathcal{O}_{F}^S)_p  \ar[d]^{ch\delta^{-1}} \ar[r] & K_{2i}(F)_p \ar[d] \ar[r]^{\partial_{F,i}} &\oplus_{v\nmid p\infty}\,K_{2i-1}(k_v)_p \ar[d]\ar[r]&0\\
0\ar[r]&H^1_S(\mathbb{Q}_p/\mathbb{Z}_p(i+1))/\mathrm{Div}\ar[r] & H^{1}(\mathbb{Q}_p/\mathbb{Z}_p(i+1))/\mathrm{Div}  \ar[r]^{\partial_{F,i}^{c}} & \oplus_{v\nmid p\infty}\,H^{0}(k_v,\,\mathbb{Q}_p/\mathbb{Z}_p(i))\ar[r]&0\\}
\end{equation}
This is because, for any finite set $T$ containing $S$, 
$$\mathrm{Div}(H^{1}_T(\mathbb{Q}_p/\mathbb{Z}_p(i+1)))=\mathrm{Div}(H^{1}(\mathbb{Q}_p/\mathbb{Z}_p(i+1)))$$
(see \cite{Sc}, §4, Lemma 5). Composing again with $\delta$ we get a commutative diagram with exact rows
$$
\xymatrix{
0\ar[r]&K_{2i}(\mathcal{O}_{F}^S)_p  \ar[d]^{ch} \ar[r] & K_{2i}(F)_p \ar[d] \ar[r]^{\partial_{F,i}} &\oplus_{v\nmid p\infty}\,K_{2i-1}(k_v)_p \ar[d]\ar[r]&0\\
0\ar[r]&H^2_S(\mathbb{Z}_p(i+1))\ar[r] & H^{2}(\mathbb{Z}_p(i+1))_p  \ar[r]^{\partial_{F,i}^{c}} & \oplus_{v\nmid p\infty}\,H^{1}(k_v,\,\mathbb{Z}_p(i))\ar[r]&0\\}
$$
Here again the rightmost map is an isomorphism and the other two vertical maps are surjective. 
\end{proof}

\begin{remark}\label{ql} 
The Quillen-Lichtenbaum conjecture predicts that for any finite set of places $T$ containing $S$, 
$$ch:K_{2i}(\mathcal{O}_{F}^T)_p\longrightarrow H^2(G_{F,\,T},\,\mathbb{Z}_p(i+1))$$ 
is indeed an isomorphism. Tate (see \cite{Ta2}, Theorem 5.4) proved this statement for $i=1$: in this case $ch$ is just the Galois symbol (\cite{DF}, proof of Theorem 8.2). The general case of the Quillen-Lichtenbaum conjecture is a consequence of the Bloch-Kato conjecture on the Galois symbol whose proof should appear soon.
\end{remark}

In the following we will be mainly interested in the bottow row of the diagram of Proposition \ref{ildiagramma}, namely the exact sequence 
\begin{equation}\label{ces}
0\to H^2(G_{F,\,S},\,\mathbb{Z}_p(i+1))\to H^{2}(F,\,\mathbb{Z}_p(i+1))_p\to\displaystyle{\bigoplus_{v\nmid p\infty}}\,H^{1}(k_v,\,\mathbb{Z}_p(i))\to0
\end{equation}
We will refer to it as the localization sequence for $H^{2}(F,\,\mathbb{Z}_p(i+1))$. The next proposition shows that, even without using the Quillen-Lichtenbaum conjecture, for our purposes there is no difference between considering (\ref{esp}) or (\ref{ces}). 

\begin{definition}
A subgroup $B$ of an abelian group $A$ is \emph{pure} if $nA\cap B=nB$ for each $n\in\mathbb{N}$.
\end{definition}

\begin{prop}
Suppose that $p$ is odd or $F$ is nonexceptional. Then the localization sequence for $H^{2}(F,\,\mathbb{Z}_p(i+1))$ splits if and only the localization sequence for $K_{2i}(F)_p$ splits. 
\end{prop}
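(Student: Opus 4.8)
The goal is to deduce splitting of one short exact sequence from splitting of another, given the commutative diagram of Proposition \ref{ildiagramma}. I would phrase everything in terms of the pushout/pullback structure encoded by that diagram. The key observation is that the vertical maps $ch\colon K_{2i}(\mathcal{O}_F^S)_p\to H^2(G_{F,S},\mathbb{Z}_p(i+1))$ and $K_{2i}(F)_p\to H^2(F,\mathbb{Z}_p(i+1))_p$ are surjective, while the rightmost vertical map is an isomorphism. So I want to argue that, under these hypotheses, the top sequence (\ref{esp}) is obtained from the bottom sequence (\ref{ces}) by a pullback along the identity on the quotient term, up to identifying kernels; and conversely the bottom is a pushout of the top. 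Either operation preserves the property of being split, \emph{provided} the relevant subobjects are pure — which is exactly why the definition of pure subgroup was just introduced.

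**The forward direction.** Suppose (\ref{ces}) splits. Pulling back the bottom row along the iso on $\bigoplus_v H^1(k_v,\mathbb{Z}_p(i))=\bigoplus_v K_{2i-1}(k_v)_p$ (really: forming the fibre product of $K_{2i}(F)_p\to\bigoplus_v\to$ and nothing, i.e. just transporting the splitting), one gets a section $\bigoplus_v K_{2i-1}(k_v)_p\to H^2(F,\mathbb{Z}_p(i+1))_p$ of $\partial_{F,i}^c$. I would then lift this section through the surjection $K_{2i}(F)_p\twoheadrightarrow H^2(F,\mathbb{Z}_p(i+1))_p$. The obstruction to lifting a homomorphism from a (finite, or more precisely finitely generated over $\mathbb Z_p$) abelian group through a surjection of abelian groups is controlled by $\mathrm{Ext}^1$ of the source with the kernel of the surjection; since the rightmost terms are finite cyclic groups direct-summed, and the kernel is a quotient of $\mathrm{div}(K_{2i}(F))_p$-type material, I expect this lift to exist after possibly adjusting by an element of that kernel — and then check that the lifted map is still a section of $\partial_{F,i}$, using commutativity of the diagram and that the right vertical map is injective. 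This is where purity enters: one needs $K_{2i}(\mathcal{O}_F^S)_p$ to sit purely inside $K_{2i}(F)_p$ (equivalently the analogous statement on the cohomology side), so that the splitting detected on torsion quotients genuinely lifts.

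**The reverse direction.** Suppose (\ref{esp}) splits, with section $s\colon\bigoplus_v K_{2i-1}(k_v)_p\to K_{2i}(F)_p$. Compose with the middle vertical map $K_{2i}(F)_p\to H^2(F,\mathbb{Z}_p(i+1))_p$; by commutativity of the right-hand square and the fact that the right vertical map is an isomorphism, the composite is a section of $\partial_{F,i}^c$ (after pre-composing with the inverse of the right iso). Hence (\ref{ces}) splits. This direction should be essentially formal — just diagram-chasing through the lower-right square — and carries no purity hypothesis, so it is the easy half.

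**Where the difficulty lies.** The main obstacle is the forward implication, specifically showing that a section on the cohomology side can be \emph{lifted} to $K$-theory rather than merely transported. The honest way to organize this is: (i) note both sequences have the same right-hand term and compatible quotient maps; (ii) show both left-hand terms are the maximal finite (or appropriately bounded) subgroups of the middle terms with isomorphic images in the middle/quotient, so that the two sequences have literally the same ``divisible-modulo'' structure; (iii) invoke that a pure extension of the finite cyclic group $\bigoplus_v K_{2i-1}(k_v)_p$ splits iff the induced extension on any quotient by a divisible subgroup splits. Concretely I would set $D=\mathrm{div}(K_{2i}(F))_p$, observe $D$ maps to $\mathrm{div}(H^2(F,\mathbb{Z}_p(i+1))_p)$ compatibly, pass to quotients by these divisible subgroups where the vertical maps become isomorphisms (this is the crux — it should follow because $ch$ is surjective with kernel inside the divisible part, by the construction in the proof of Proposition \ref{ildiagramma} via $H^1(\mathbb{Q}_p/\mathbb{Z}_p)/\mathrm{Div}$), and then splitting of one quotiented sequence is literally splitting of the other. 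I expect the argument to reduce, after these identifications, to the trivial observation that isomorphic short exact sequences are simultaneously split; the real content is verifying that killing divisible subgroups identifies the two sequences, which is where one uses surjectivity of the Chern characters and the $\mathrm{Div}$-computation already cited from \cite{Sc}.
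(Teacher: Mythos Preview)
Your reverse direction is fine and matches the paper: compose a $K$-theoretic section with the middle vertical map and use the right-hand square.

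The forward direction has a genuine gap. Your plan hinges on the claim that the kernel $C_i$ of the middle vertical map $K_{2i}(F)_p\twoheadrightarrow H^2(F,\mathbb{Z}_p(i+1))_p$ lies inside $\mathrm{div}(K_{2i}(F))_p$, so that modding out by divisible parts turns the vertical surjections into isomorphisms. There is no reason for this containment. Recall from the introduction that $\mathrm{Div}(K_{2i}(F))=0$, so the maximal divisible subgroup is already trivial; and $\mathrm{div}(K_{2i}(F))_p\cong W\!K^{\acute{e}t}_{2i}(F)$ is a specific finite group with no evident relation to $\ker(ch)$. The construction in Proposition~\ref{ildiagramma} via $H^1(-,\mathbb{Q}_p/\mathbb{Z}_p(i+1))/\mathrm{Div}$ does not help: that quotient is on the cohomology side and the vertical maps there are still only surjective. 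Moreover, your $\mathrm{Ext}^1$ remark is not usable as stated, since the source $\bigoplus_{v\nmid p\infty}K_{2i-1}(k_v)_p$ is an \emph{infinite} direct sum, not a finite or finitely generated $\mathbb{Z}_p$-module.

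What the paper actually does for the forward direction is prove directly that the middle vertical surjection splits as a map of abelian groups, and then transport the section. The splitting of $K_{2i}(F)_p\twoheadrightarrow H^2(F,\mathbb{Z}_p(i+1))_p$ is obtained by showing its kernel $C_i$ is (i) pure in $K_{2i}(F)_p$, via a comparison with $K$-theory with $\mathbb{Z}/p^n$-coefficients showing $K_{2i}(F)[p^n]\to K_{2i}^{\acute{e}t}(F)[p^n]$ is surjective for all $n$, and (ii) finite, because $C_i$ coincides with the kernel of $K_{2i}(\mathcal{O}_F^S)_p\to K_{2i}^{\acute{e}t}(\mathcal{O}_F^S)$. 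Then Kaplansky's theorem (a finite pure subgroup is a direct summand) gives the splitting. For $p=2$ one instead invokes \cite{RW} to get that $ch$ is an isomorphism outright. Neither of these inputs appears in your sketch, and they are the substantive content of the forward implication.
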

\begin{proof}
A diagram chasing in the diagram of Proposition \ref{ildiagramma} shows that if the localization sequence for $K_{2i}(F)_p$ splits then the localization sequence for $H^{2}(F,\,\mathbb{Z}_p(i+1))$ splits. As for the converse, we can assume $p\ne2$ because if $p=2$, then $F$ is totally imaginary and the $ch$'s are isomorphisms by \cite{RW}, Theorem 0.1\footnote{It is likely that there exists a proof of the proposition for $p=2$ and $F$ nonexceptional which does not use the results in \cite{RW}.}. Banaszak (see \cite{Ba}, Proposition 2) proved that the natural map
\begin{equation}\label{ssban}
\nu:K_{2i}(\mathcal{O}_F)_p\longrightarrow K_{2i}^{\acute{e}t}(\mathcal{O}_F\textstyle{[\frac{1}{p}]})
\end{equation} 
is split surjective. We are going to prove the analogous result for the map 
\begin{equation}\label{sscap}
\nu: K_{2i}(F)_p\longrightarrow K_{2i}^{\acute{e}t}(F)_p
\end{equation}
with the same strategy as Banaszak, taking into account that the groups involved are no longer finite (but still torsion). 
First of all, for each $n\in \mathbb{N}$, there is a commutative diagram 
$$
\begin{CD}
K_{2i+1}(F,\,\mathbb{Z}/p^n\mathbb{Z})@>>> K_{2i}(F)[p^n]@>>>0\\
 @VVV   @V\nu VV \\
K_{2i+1}^{\acute{e}t}(F,\,\mathbb{Z}/p^n\mathbb{Z})@>>> K_{2i}^{\acute{e}t}(F)[p^n]@>>>0\\
\end{CD}
$$
with exact rows and surjective vertical maps (see \cite{Ba}, Diagram 1.6). This implies that the kernel $C_i$ of the map $\nu$ in (\ref{sscap}) is a pure subgroup of $K_{2i}(F)_p$. Moreover $C_i$ is finite since it is easily seen to coincide with the kernel of the map in (\ref{ssban}) (see \cite{Ba}, Remark 7). Hence Theorem 7 of \cite{Ka} tells us that the map in (\ref{sscap}) is split. By definition of the maps $ch$, this proves that the map 
$$K_{2i}(F)_p\longrightarrow H^2(F,\,\mathbb{Z}_p(i+1))_p$$
in the diagram od Proposition \ref{ildiagramma} splits. The proof of the proposition is then achieved by a diagram chasing in the diagram of Proposition \ref{ildiagramma}. 
\end{proof}

\section{Main result}\label{mr}
We are going to describe the obstruction to the existence of a splitting for the localization sequence for $H^2(F,\mathbb{Z}_p(i+1))$ in terms of coinvariants of twisted $p$-parts of the class groups of certain subfields of the fields $F(\mu_{p^n})$. For a field $E$, we denote by $\mu_{p^n}(E)$ the group of $p^n$-th roots of unity in an algebraic closure of $E$ (the reference to $E$ in $\mu_{p^n}(E)$ will be often omitted).\\

\begin{notation}
For typographical convenience, we set 
$$\Omega_{F,i,n}^{(p)}=\Omega_{i,n}^{(p)}=\left(H^2(G_{F,\,S},\,\mathbb{Z}_p(i+1))\cap p^nH^{2}(F,\,\mathbb{Z}_p(i+1))_p\right)\Big/p^nH^2(G_{F,\,S},\,\mathbb{Z}_p(i+1))$$
\end{notation}

\begin{remark}
Note that, from the definition of $\Omega^{(p)}_{i,n}$, we have $\Omega^{^{(p)}}_{i,0}=0$. 
\end{remark}

The following lemma shows that the $\Omega_{i,n}^{(p)}$'s are the obstructions to the existence of a splitting for the localization sequence for $H^2(F,\mathbb{Z}_p(i+1))$. 

\begin{lemma}\label{snake}
The localization sequence for $H^{2}(F,\,\mathbb{Z}_p(i+1))$ splits if and only if for every $n\in \mathbb{N}$ we have $\Omega^{(p)}_{i,n}=0$.
\end{lemma}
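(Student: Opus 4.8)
The plan is to analyze the short exact sequence
$$0\to A\to B\xrightarrow{\partial} C\to 0,$$
where $A=H^2(G_{F,\,S},\,\mathbb{Z}_p(i+1))$, $B=H^2(F,\,\mathbb{Z}_p(i+1))_p$ and $C=\bigoplus_{v\nmid p\infty}H^1(k_v,\,\mathbb{Z}_p(i))$, purely as an extension of abelian groups, and to recognize $\Omega^{(p)}_{i,n}$ as the natural obstruction to lifting torsion. The key structural observation is that $A$ is a \emph{pure} subgroup of $B$ if and only if $\Omega^{(p)}_{i,n}=0$ for all $n$: indeed $\Omega^{(p)}_{i,n}=(A\cap p^nB)/p^nA$, so purity, i.e. $A\cap p^nB=p^nA$ for all $n$, is literally the vanishing of all the $\Omega^{(p)}_{i,n}$. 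So the lemma is equivalent to: the sequence splits if and only if $A$ is pure in $B$. One direction is trivial — a split subgroup is a direct summand, hence pure. For the converse I would invoke a splitting criterion for pure subgroups; this is exactly the sort of statement used elsewhere in the paper (Theorem 7 of \cite{Ka}, already cited in the previous proof), which says that a pure subgroup $A\subseteq B$ with $A$ of bounded exponent — or more generally under suitable finiteness hypotheses — is a direct summand. So the real content is checking that the relevant finiteness/boundedness hypothesis holds here.

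Accordingly, the first step is to pin down the structure of $A=H^2(G_{F,\,S},\,\mathbb{Z}_p(i+1))$: by the results quoted in Section \ref{cgc} (via $ch$) this group is isomorphic to $K_{2i}(\mathcal{O}_F^S)_p$ up to the wild-kernel discrepancy, and in any case it is a finitely generated $\mathbb{Z}_p$-module whose torsion part is finite, so $A\cong \mathbb{Z}_p^r\oplus (\text{finite})$. Meanwhile $B=H^2(F,\,\mathbb{Z}_p(i+1))_p$ has trivial maximal divisible subgroup (this is noted in the introduction, $\mathrm{Div}(K_{2i}(F))=0$, and transfers through the diagram of Proposition \ref{ildiagramma}), and $C=\bigoplus_v H^1(k_v,\,\mathbb{Z}_p(i))$ is a direct sum of finite cyclic groups. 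From the sequence, $B$ is an extension of a group with trivial divisible part by another such; one checks $\mathrm{Div}(B)=0$ as well. The point is that $A$ has finite torsion and $B/A\cong C$ is a torsion group, so in particular the $\mathbb{Z}_p$-rank of $A$ equals that of $B$, and the torsion-free quotient issue is controlled.

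The second step is the splitting argument itself. Assume $\Omega^{(p)}_{i,n}=0$ for all $n$, i.e. $A$ is pure in $B$. Write $A=A_{\mathrm{tors}}\oplus A_{\mathrm{free}}$ with $A_{\mathrm{free}}\cong\mathbb{Z}_p^r$ free. Since $A$ is pure in $B$, the finite subgroup $A_{\mathrm{tors}}$ is pure in $B$ hence (being of bounded exponent) a direct summand of $B$ by the classical theorem on pure bounded subgroups; quotienting, we reduce to the case where $A$ is torsion-free of finite rank over $\mathbb{Z}_p$ and still pure in the quotient $B'$. A pure subgroup which is a free $\mathbb{Z}_p$-module of finite rank and whose quotient $B'/A\cong$ (a subgroup of $C$) is torsion is again a direct summand: purity gives that $B'/A$ has no $p$-torsion coming from $A$-relations, and one can lift a $\mathbb{Z}_p$-basis of $A$ to $B'$ and split off the free part, after which the torsion quotient $B'/A$ — a subgroup of $C$, a sum of finite cyclics — is itself free as there is nothing left, forcing $B'\cong A\oplus(B'/A)$. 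Reassembling the summands yields a splitting of the original sequence. I would cite \cite{Ka}, Theorem 7 (as in the proof of the preceding proposition) to package steps of this reduction cleanly rather than reproving them.

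The main obstacle, and the step deserving the most care, is the converse direction — extracting an honest splitting from purity — because $A$, $B$, $C$ are no longer finite groups (they carry a $\mathbb{Z}_p$-module structure and infinite direct sums occur in $C$), so one cannot simply quote a theorem about finite abelian groups. The safe route is to separate the bounded torsion part of $A$ (handled by the Prüfer/Kulikov theorem on pure bounded subgroups) from the free $\mathbb{Z}_p$-part of $A$ (handled by the fact that a finitely generated free $\mathbb{Z}_p$-submodule which is pure, equivalently $\mathbb{Z}_p$-direct, splits off), using that $\mathrm{Div}(B)=0$ to control the structure of the quotient; keeping track of the infinite direct sum in $C$ requires only that each individual summand be finite, which it is. Once this decomposition is in place the rest is bookkeeping, and the identification $\Omega^{(p)}_{i,n}=(A\cap p^nB)/p^nA$ makes the equivalence with purity immediate.
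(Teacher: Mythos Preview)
Your identification of $\Omega^{(p)}_{i,n}=(A\cap p^nB)/p^nA$ and the equivalence ``$\Omega^{(p)}_{i,n}=0$ for all $n$ $\Leftrightarrow$ $A$ is pure in $B$'' is exactly what the paper does. The difference is in how one gets from purity to splitting.

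The paper does not analyse $A$ at all: it applies Theorem~5 of \cite{Ka}, which says that a pure subgroup whose \emph{quotient} is a direct sum of cyclic groups is a direct summand. Since $C=\bigoplus_{v\nmid p\infty}H^1(k_v,\mathbb{Z}_p(i))$ is visibly a direct sum of finite cyclic groups, this finishes the proof in one line, with no need to know anything about the structure of $A$ or $B$.

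Your route via Theorem~7 of \cite{Ka} (bounded pure subgroups are direct summands) also works, but you have missed the observation that makes it immediate: $B=H^{2}(F,\mathbb{Z}_p(i+1))_p$ is by definition the $p$-primary torsion part, so $B$ is a torsion group, and hence $A\subseteq B$ is torsion as well. Since $A$ is a finitely generated $\mathbb{Z}_p$-module, it is therefore \emph{finite}; in your notation $r=0$. Theorem~7 then applies in one step and the whole decomposition into $A_{\mathrm{tors}}\oplus A_{\mathrm{free}}$ is unnecessary. Note also that the sketch you give for splitting off a free $\mathbb{Z}_p$-summand (``lift a $\mathbb{Z}_p$-basis and split off the free part'') is not a valid argument in the category of abelian groups --- $\mathbb{Z}_p$ is not a projective abelian group --- so that step would be a genuine gap if $r>0$; fortunately it is vacuous here.

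In short: correct in substance, but the paper's use of Theorem~5 (structure of the quotient) is cleaner than your use of Theorem~7 (structure of the subgroup), and your detour through a hypothetical free part is both unnecessary and, as written, not justified.
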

\begin{proof}
Thanks to Theorem 5 in \cite{Ka} and the fact that any direct summand of an abelian group is pure, we have the following equivalences 
\begin{eqnarray*}
\Omega^{(p)}_{i,n}=0 \quad\forall\,n\in \mathbb{N}&\Leftrightarrow& \textrm{$H^2(G_{F,\,S},\,\mathbb{Z}_p(i+1))$ is pure in $H^{2}(F,\,\mathbb{Z}_p(i+1))_p$}\\
&\Leftrightarrow& \textrm{the localization sequence for $H^{2}(F,\,\mathbb{Z}_p(i+1))$ splits.}
\end{eqnarray*}

\end{proof}

We will make use of the following notation (see \cite{We2}).

\begin{notation}
Let $E$ be any field. If $M$ is a $G_E$-module, we denote by $E(M)$ the fixed field of the kernel of the homomorphism $G_E\to \mathrm{Aut}(M)$ induced by the action of $\mathrm{Gal}(\overline{E}/E)$ on $M$. 
\end{notation}

For each $n\in\mathbb{N}$, we now introduce the subfield $F_{i,n}$ of $F(\mu_{p^n})$ which will be relevant for us. Such subfields has been used for the first time by Weibel in \cite{We2}.
\begin{notation}
Let $F$ be a number field. For $n\in \mathbb{N}$ and $i\geq 1$, set $F_{i,n}=F(\mu_{p^n}(F)^{\otimes i})$ (note that $F_{i,0}=F$ for any $i\geq 1$). We will also use the notation $\Gamma_{i,n}=\mathrm{Gal}(F_{i,n}/F)$. If $w$ is a place in $F_{i,n}$, then denote by $(k_{i,n})_w$ the residue field of $F_{i,n}$ at $w$ (thus $(k_{i,0})_w=k_w$). Finally, let $S_{i,n}$ be the set of primes of $F_{i,n}$ above $p$ and $\infty$ (thus $S_{i,0}=S$) and let $G_{S_{i,n}}$ denote the Galois group of the maximal extension of $F_{i,n}$ unramified outside $S_{i,n}$ (thus $G_{S_{i,0}}=G_S=G_{F,\,S}$). Of course, in all this notation, a reference to $p$ should appear but the context should prevent any misunderstanding.  
\end{notation}

The following lemmas are technical ingredients of the proof of Theorem \ref{gh}. Most of them are well-known, but we collect them for the convenience of the reader, giving proofs whenever we could not find an adequate reference in the literature.
  
\begin{lemma}\label{pn}
Suppose that $p$ is an odd prime or $F$ is nonexceptional. Let $v\nmid p$ be a place in $F$. For every $n,\,m\in \mathbb{N}$, there are isomorphisms of $\Gamma_{i,m}$-modules  
$$
\bigoplus_{w|v} H^{1}((k_{i,m})_w,\,\mathbb{Z}_p(i))[p^n]\cong \bigoplus_{w|v}H^0((k_{i,m})_w,\,\mu_{p^{n}}^{\otimes i})
$$
and 
$$
H^{2}(G_{S_{i,m}},\,\mathbb{Z}_p(i))/p^n\cong H^{2}(G_{S_{i,m}},\,\mu_{p^{n}}^{\otimes i})
$$
\end{lemma}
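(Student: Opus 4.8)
The plan is to deduce both isomorphisms from the long exact cohomology sequence attached to $0\to\mathbb{Z}_p(i)\stackrel{p^n}{\to}\mathbb{Z}_p(i)\to\mu_{p^n}^{\otimes i}\to 0$ (this last identification $\mathbb{Z}_p(i)/p^n\cong\mu_{p^n}^{\otimes i}$ being the defining one), combined with vanishing of the relevant divisible parts. For the residue field statement, $G_{(k_{i,m})_w}\cong\widehat{\mathbb{Z}}$ has strict cohomological $p$-dimension $1$, so $H^2((k_{i,m})_w,\mathbb{Z}_p(i))=0$; the long exact sequence then gives a short exact sequence $0\to H^1((k_{i,m})_w,\mathbb{Z}_p(i))/p^n\to H^1((k_{i,m})_w,\mu_{p^n}^{\otimes i})\to H^2((k_{i,m})_w,\mathbb{Z}_p(i))[p^n]\to 0$, whence an isomorphism on $H^1$ modulo $p^n$; but I actually want $H^1[p^n]$, so instead I read off $0\to H^0((k_{i,m})_w,\mathbb{Z}_p(i))/p^n\to H^0((k_{i,m})_w,\mu_{p^n}^{\otimes i})\to H^1((k_{i,m})_w,\mathbb{Z}_p(i))[p^n]\to 0$ and observe that $H^0((k_{i,m})_w,\mathbb{Z}_p(i))$ is torsion (it is a submodule of the finitely generated $\mathbb{Z}_p$-module on which Frobenius acts by $|(k_{i,m})_w|^i$, and $|(k_{i,m})_w|^i\neq 1$ since $v\nmid p$ forces $|(k_{i,m})_w|\geq 3$ when $p$ is odd, and for $p=2$ nonexceptionality rules out the remaining degenerate case), hence divisible-free and in fact finite, so $H^0((k_{i,m})_w,\mathbb{Z}_p(i))/p^n$... here I must be slightly careful: $H^0((k_{i,m})_w,\mathbb{Z}_p(i))=0$ outright because an element fixed by Frobenius satisfies $(|(k_{i,m})_w|^i-1)x=0$ in a torsion-free group. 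Thus $H^0((k_{i,m})_w,\mu_{p^n}^{\otimes i})\cong H^1((k_{i,m})_w,\mathbb{Z}_p(i))[p^n]$, and summing over the finitely many $w\mid v$ (a $\Gamma_{i,m}$-stable operation, since $\Gamma_{i,m}$ permutes these places and the isomorphism is functorial) yields the first displayed isomorphism as $\Gamma_{i,m}$-modules.

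For the second isomorphism I would again use the long exact sequence, now over $G_{S_{i,m}}$: it contains $H^2(G_{S_{i,m}},\mathbb{Z}_p(i))\stackrel{p^n}{\to}H^2(G_{S_{i,m}},\mathbb{Z}_p(i))\to H^2(G_{S_{i,m}},\mu_{p^n}^{\otimes i})\to H^3(G_{S_{i,m}},\mathbb{Z}_p(i))$, giving $0\to H^2(G_{S_{i,m}},\mathbb{Z}_p(i))/p^n\to H^2(G_{S_{i,m}},\mu_{p^n}^{\otimes i})\to H^3(G_{S_{i,m}},\mathbb{Z}_p(i))[p^n]\to 0$. So I need $H^3(G_{S_{i,m}},\mathbb{Z}_p(i))[p^n]=0$, equivalently (by the same sequence one degree up, since $H^3(G_{S_{i,m}},\mu_{p^{n'}}^{\otimes i})=0$) it suffices that $H^3(G_{S_{i,m}},\mathbb{Z}_p(i))$ is $p$-divisible, or directly that the strict cohomological $p$-dimension of $G_{S_{i,m}}$ is $2$. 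Under the hypothesis that $p$ is odd or $F$ (hence $F_{i,m}$) is nonexceptional, one has $\mathrm{scd}_p(G_{S_{i,m}})=2$: this is the standard fact that $G_{S}$ of a number field with $S$ containing all primes above $p$ and $\infty$ has strict cohomological $p$-dimension $2$ when there are no real places contributing $2$-torsion — reference Tate, or \cite{Sc}, or Neukirch–Schmidt–Wingberg. Given this, $H^3(G_{S_{i,m}},\mathbb{Z}_p(i))=0$ and a fortiori its $p^n$-torsion vanishes, so the map $H^2(G_{S_{i,m}},\mathbb{Z}_p(i))/p^n\to H^2(G_{S_{i,m}},\mu_{p^n}^{\otimes i})$ is an isomorphism.

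The main obstacle — really the only nonformal point — is justifying the cohomological dimension inputs, namely $\mathrm{scd}_p(G_{(k_{i,m})_w})=1$ (easy, since the absolute Galois group of a finite field is $\widehat{\mathbb{Z}}$ with $\mathrm{cd}_p=1$ and $\mathrm{scd}_p=1$ by Tate's theorem, as $\mathbb{Q}_p/\mathbb{Z}_p$-coefficients give vanishing $H^2$) and $\mathrm{scd}_p(G_{S_{i,m}})=2$, which requires the hypothesis on $p$ and exceptionality precisely to control the contribution of archimedean places at $p=2$; I would state it with a reference rather than reprove it. One should also note that in the first isomorphism the condition $v\nmid p$ is what ensures $|(k_{i,m})_w|^i-1$ is a $p$-adic unit times something nonzero, making $H^0((k_{i,m})_w,\mathbb{Z}_p(i))=0$; the hypothesis $p$ odd or $F$ nonexceptional is not actually needed for the residue-field statement per se but is carried along because it is needed elsewhere and because for $p=2$ one wants to avoid $|(k_{i,m})_w|=1$-type pathologies, which cannot occur for an actual residue field anyway. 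I would present the argument uniformly via the two long exact sequences, keeping the $\Gamma_{i,m}$-equivariance visible throughout by only ever invoking functoriality in the field (for the place-sum) and in the group (for $G_{S_{i,m}}$, on which $\Gamma_{i,m}$ acts through the outer action identifying $G_{S_{i,m}}$ as a normal subgroup of $G_{F,S}/$ its closure).
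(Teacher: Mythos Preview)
Your proof is correct and follows essentially the same approach as the paper: both deduce the isomorphisms from the long exact sequence attached to $0\to\mathbb{Z}_p(i)\stackrel{p^n}{\to}\mathbb{Z}_p(i)\to\mu_{p^n}^{\otimes i}\to 0$, using $H^0((k_{i,m})_w,\mathbb{Z}_p(i))=0$ (since $i>0$) for the first and $\mathrm{cd}_p(G_{S_{i,m}})\le 2$ (hence $H^3=0$, via \cite{NSW}) for the second, with the $\Gamma_{i,m}$-equivariance coming from naturality of the connecting homomorphism (the paper phrases this via induced modules from the decomposition group, but that is the same content). One small slip: $\mathrm{scd}_p(\widehat{\mathbb{Z}})=2$, not $1$ (take $H^2(\widehat{\mathbb{Z}},\mathbb{Z})$), but since you immediately switch to the $H^0\to H^0\to H^1$ portion of the sequence this does not affect your argument.
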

\begin{proof}
Both assertions come from the cohomology sequence corresponding to the exact sequence 
$$0\to \mathbb{Z}_p(i)\stackrel{p^n}{\longrightarrow} \mathbb{Z}_p(i)\longrightarrow \mu_{p^n}^{\otimes i}\to 0$$
In fact 
\begin{enumerate}
	\item[(i)] $H^{0}((k_{i,m})_w,\,\mathbb{Z}_p(i))=0$ for any place $w|v$ in $F_m$ (since $i>0$);
	\item[(ii)] $H^{3}(G_{S_{i,m}},\,\mathbb{Z}_p(i))=0$ since $G_{S_{i,m}}$ has $p$-cohomological dimension less or equal to $2$ (see \cite{NSW}, (8.3.17) and (8.3.18)): this is true even in the case $p=2$ because in that case $F_{i,m}$ has to be nonexceptional and therefore it has no real embeddings.
\end{enumerate}
Now, since conjugation in cohomology commutes with the connecting homomorphism, we have that, for any fixed $w_0|v$ in $F_{i,m}$, (i) gives an isomorphism of $D_v$-modules ($D_v$ being the decomposition group at $v$ in $F_{i,m}/F$) 
$$H^{1}((k_{i,m})_{w_0},\,\mathbb{Z}_p(i))[p^n]\cong H^0((k_{i,m})_{w_0},\,\mu_{p^{n}}^{\otimes i})$$
and (ii) gives an isomorphism of $\Gamma_{i,m}$-modules 
$$H^{2}(G_{S_{i,m}},\,\mathbb{Z}_p(i))/p^n\cong H^{2}(G_{S_{i,m}},\,\mu_{p^{n}}^{\otimes i})$$  
Furthermore 
$$\bigoplus_{w|v}H^{1}((k_{i,m})_w,\,\mathbb{Z}_p(i))[p^n]=\textrm{Ind}_{D_v}^{\Gamma_m}H^{1}((k_{i,m})_{w_0},\,\mathbb{Z}_p(i))[p^n]$$ 
for any fixed $w_0|v$ in $F_{i,m}$ and 
$$\bigoplus_{w|v}H^0((k_{i,m})_w,\,\mu_{p^{n}}^{\otimes i})=\textrm{Ind}_{D_v}^{\Gamma_n}H^0((k_{i,m})_{w_0},\,\mu_{p^{n}}^{\otimes i})$$ 
This concludes the proof.
\end{proof}

\begin{lemma}\label{cf}
Let $v\nmid p$ be a finite place of $F$. If $w$ is a place of $F_{i,n}$ above $v$, then $(k_{i,n})_w=k_v(\mu_{p^n}^{\otimes i})$.
\end{lemma}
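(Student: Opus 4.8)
The goal is Lemma \ref{cf}: for a finite place $v\nmid p$ of $F$ and $w$ a place of $F_{i,n}$ above $v$, the residue field $(k_{i,n})_w$ equals $k_v(\mu_{p^n}^{\otimes i})$, i.e. the residue field of $F_{i,n}=F(\mu_{p^n}^{\otimes i})$ at $w$ is obtained from $k_v$ by adjoining $\mu_{p^n}^{\otimes i}$ as a $G_{k_v}$-module. The plan is to exploit the fact that $F_{i,n}/F$ is unramified at $v$ (because $v\nmid p$ and $F(\mu_{p^n})/F$ is unramified away from $p$, hence so is its subextension $F_{i,n}$), so that decomposition and inertia behave well and residue field extensions match up with the extension of local fields.

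First I would reduce to a statement about the local extension at $v$. Since $F_{i,n}/F$ is abelian and unramified at $v$, the completion $(F_{i,n})_w$ is the unramified extension of $F_v$ whose residue field is $(k_{i,n})_w$, and the decomposition group $D_v=\mathrm{Gal}((F_{i,n})_w/F_v)$ is cyclic, topologically generated by the Frobenius, and is canonically isomorphic to $\mathrm{Gal}((k_{i,n})_w/k_v)$. On the other hand $(F_{i,n})_w = F_v(\mu_{p^n}^{\otimes i})$: indeed $F_{i,n}=F(\mu_{p^n}^{\otimes i})$ is the fixed field of the kernel of $G_F\to\mathrm{Aut}(\mu_{p^n}^{\otimes i})$, and passing to the decomposition group, $(F_{i,n})_w$ is the fixed field of the kernel of $G_{F_v}\to\mathrm{Aut}(\mu_{p^n}^{\otimes i})$, i.e. $(F_{i,n})_w=F_v(\mu_{p^n}^{\otimes i})$.

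Next I would identify $F_v(\mu_{p^n}^{\otimes i})$ with the extension corresponding to $k_v(\mu_{p^n}^{\otimes i})$. The key point is that the $G_{F_v}$-action on $\mu_{p^n}^{\otimes i}$ factors through $G_{k_v}$: the inertia subgroup of $G_{F_v}$ acts trivially on $\mu_{p^n}$ (as $v\nmid p$, so $\mu_{p^n}\subseteq (k_v)^{\mathrm{sep}}$ and the mod-$p^n$ cyclotomic character is unramified at $v$), hence trivially on $\mu_{p^n}^{\otimes i}$. Therefore the kernel of $G_{F_v}\to\mathrm{Aut}(\mu_{p^n}^{\otimes i})$ contains inertia, so its fixed field $F_v(\mu_{p^n}^{\otimes i})$ is unramified over $F_v$, and its residue field is exactly the fixed field inside $(k_v)^{\mathrm{sep}}$ of the kernel of the induced map $G_{k_v}\to\mathrm{Aut}(\mu_{p^n}^{\otimes i})$, which is by definition $k_v(\mu_{p^n}^{\otimes i})$. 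Since an unramified extension of $F_v$ is determined by its residue field extension, we get $(k_{i,n})_w=k_v(\mu_{p^n}^{\otimes i})$.

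I do not expect a serious obstacle here; the statement is essentially formal once one observes that the tensor power $\mu_{p^n}^{\otimes i}$ is, as a $G_F$-module, just $\mu_{p^n}$ with the cyclotomic character raised to the $i$-th power, so $F(M)$ for $M=\mu_{p^n}^{\otimes i}$ and $M=\mu_{p^n}$ differ only by which subextension of $F(\mu_{p^n})/F$ one lands in, and everything is unramified at $v\nmid p$. The only mild subtlety worth spelling out is the compatibility between "fixed field of a kernel" globally and locally, i.e. that $E(M)$ localizes correctly: $(E(M))_w = E_v(M)$ when one passes to decomposition groups, which follows because $D_v\hookrightarrow G_F$ induces the restriction of the action map to $D_v$ and the image of $D_v$ in $\mathrm{Aut}(M)$ is the decomposition subgroup of $\mathrm{Gal}(F(M)/F)$. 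Hence the argument is short; the main care is bookkeeping with decomposition/inertia groups rather than any real difficulty.
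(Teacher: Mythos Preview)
Your proposal is correct and takes essentially the same approach as the paper. The paper argues inside the finite extension $F(\mu_{p^n})/F$, identifying $\mathrm{Gal}(k_v(\mu_{p^n})/k_v)$ with the decomposition group $D'_v$ (using that $v\nmid p$ makes everything unramified) and then computing that both $(k_{i,n})_w$ and $k_v(\mu_{p^n}^{\otimes i})$ correspond to the same quotient $D'_v/(D'_v\cap P)$ where $P=\mathrm{Gal}(F(\mu_{p^n})/F_{i,n})$; you instead pass to completions and use $G_{F_v}\twoheadrightarrow G_{k_v}$, but the content is identical.
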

\begin{proof}
Both $(k_{i,n})_w$ and $k_v(\mu_{p^n}^{\otimes i})$ are subextensions of $k_v(\mu_{p^n})/k_v$. Set $D_v$ (resp. $D'_v$) for the decomposition group of $v$ in $F_{i,n}/F$ (resp. $F(\mu_{p^n})/F$) and $P$ for the Galois group of $F(\mu_{p^n})/F_{i,n}$. Identifying $\mathrm{Gal}(k_v(\mu_{p^n})/k_v)$ with $D'_v$ we get
$$\mathrm{Gal}(k_v(\mu_{p^n})/k_v(\mu_{p^n}^{\otimes i}))= D'_v\cap P$$
$$\mathrm{Gal}((k_{i,m})_v/k_v)= D_v=D'_v/D'_v\cap P$$
which proves the result.
\end{proof}

\begin{lemma}\label{we}
Let $G$ be a finite cyclic group and let $M$ be a faithful $G$-module which is cyclic of order a power of $p$ as an abelian group. Then
\begin{itemize}
	\item if $p\ne 2$, then $H_j(G,\,M)=0$ for any $j\geq 1$ (or equivalently the norm map $M_G\rightarrow M^G$ is surjective);
	\item if $p=2$ and $G\ne\{\pm1\}$ (i.e. $G$ is not a group of order $2$ whose generator $g$ satisfies $gm=-m$ for any $m\in M$), then $H_j(G,\,M)=0$ for any $j\geq 1$ (or equivalently the norm map $M_G\rightarrow M^G$ is surjective);
\end{itemize}
\end{lemma}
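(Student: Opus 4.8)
The plan is to reduce everything to a concrete computation with cyclic $p$-groups acting faithfully on a cyclic abelian $p$-group $M$, exploiting the well-known structure of $\mathrm{Aut}(\mathbb{Z}/p^r)$. Since $M$ is cyclic of order $p^r$, we have $\mathrm{Aut}(M)\cong(\mathbb{Z}/p^r)^\times$, and a faithful action of a finite cyclic group $G$ amounts to an injection $G\hookrightarrow(\mathbb{Z}/p^r)^\times$. When $p$ is odd, $(\mathbb{Z}/p^r)^\times$ is itself cyclic, so $G$ is cyclic of order dividing $p^{r-1}(p-1)$; when $p=2$ and $r\geq3$, $(\mathbb{Z}/2^r)^\times\cong\mathbb{Z}/2\times\mathbb{Z}/2^{r-2}$, while for $r\leq2$ it is cyclic of order $\leq2$. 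In all cases the hypothesis that $G$ acts faithfully forces a lower bound on $r$ in terms of $|G|$, and this is exactly what will make the homology vanish.

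First I would recall that for a finite cyclic group $G=\langle g\rangle$ of order $m$ acting on a module $M$, Tate cohomology is $2$-periodic, so it suffices to show $\widehat{H}^0(G,M)=M^G/\mathrm{N}_GM=0$ and $\widehat{H}^{-1}(G,M)={}_{\mathrm{N}_G}M/(g-1)M=0$, where $\mathrm{N}_G=1+g+\cdots+g^{m-1}$; equivalently it suffices to prove the norm map $M_G\to M^G$ is surjective (the parenthetical reformulation in the statement), since then $\widehat{H}^0=0$, hence all $\widehat{H}^j=0$ by periodicity, hence $H_j(G,M)=0$ for $j\geq1$. So the whole lemma comes down to computing $M^G$, $\mathrm{N}_GM$ and comparing orders. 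Write the generator $g$ as multiplication by a unit $u\in(\mathbb{Z}/p^r)^\times$. Then $M^G=M[u-1]=\{x\in M:(u-1)x=0\}$ has order $p^{v_p(u-1)}$ (capped at $p^r$), while $\mathrm{N}_GM=\left(\tfrac{u^m-1}{u-1}\right)M$ has index $p^{\min(r,\,v_p(u^m-1)-v_p(u-1))}$ in $M$... more simply, $|M/\mathrm{N}_GM| = p^{\min(r, v_p((u^m-1)/(u-1)))}$. Using that $u$ has exact order $m$ in $(\mathbb{Z}/p^r)^\times$, I would show $v_p(u-1)=v_p((u^m-1)/(u-1))$, which gives $|M^G|=|M/\mathrm{N}_GM|$ and hence $|\mathrm{N}_GM|=|M^G|$; since $\mathrm{N}_GM\subseteq M^G$ always, surjectivity follows.

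The computational heart, and the main obstacle, is the identity $v_p(u-1)=v_p\bigl((u^m-1)/(u-1)\bigr)$ under the faithfulness hypothesis. For $p$ odd this is the standard lifting-the-exponent fact: if $u\equiv1\pmod{p}$ then $v_p(u^m-1)=v_p(u-1)+v_p(m)$, while faithfulness forces $v_p(m)$ to account exactly for the $p$-part of $u$'s order, giving the equality; and if $u\not\equiv1\pmod p$ then $v_p(u-1)=0$ and one checks $p\nmid(u^m-1)/(u-1)$ directly since $u$ has order $m$ and $m\mid p-1$ in the prime-to-$p$ part. For $p=2$ the extra subtlety is the exceptional case $G=\{\pm1\}$, i.e. $u=-1$, $m=2$: there $M^G=M[2]$ has order $2$ (assuming $r\geq1$) but $\mathrm{N}_GM=(1+u)M=0$, so the norm is not surjective — this is precisely why it must be excluded. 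Once $G\neq\{\pm1\}$, either $m$ is odd (and one argues as in the odd-prime case on the $2$-part), or $4\mid m$ forcing $r\geq3$ and $u\equiv1\pmod4$ up to sign so that lifting-the-exponent $v_2(u^m-1)=v_2(u-1)+v_2(m)$ applies; I would treat the remaining small cases $m=2$ with $u\not\equiv-1$ (so $u$ has order $2$ and $u\equiv1+2^{r-1}\pmod{2^r}$, whence $v_2(u-1)=r-1$ and $(u^2-1)/(u-1)=u+1\equiv2+2^{r-1}$, of $2$-adic valuation $1$ when $r\geq3$, matching $|M^G|$ only if... ) carefully and by hand. These are finitely many elementary valuation computations; the only real risk is bookkeeping errors, so I would organize the argument uniformly as "$|M^G|=|M/\mathrm{N}_GM|$ via lifting-the-exponent, except $u=-1,m=2$."
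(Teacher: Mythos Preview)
The paper does not prove this lemma; it simply cites \cite{We2}, Lemma 3.2 and Remark 3.2.1. Your direct approach via lifting-the-exponent is a reasonable strategy for a self-contained argument, but the central computation is wrong.

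Your claimed identity $v_p(u-1)=v_p\bigl((u^m-1)/(u-1)\bigr)$ is false. Take $p=3$, $M=\mathbb{Z}/27\mathbb{Z}$, and $u=10$, which has exact order $m=3$ in $(\mathbb{Z}/27)^\times$; then $v_3(u-1)=v_3(9)=2$ while $v_3\bigl((u^3-1)/(u-1)\bigr)=v_3(111)=1$. The correct statement, for $p$ odd and $s:=v_p(u-1)\geq 1$, is that faithfulness forces the exact order of $u$ in $(\mathbb{Z}/p^r)^\times$ to be $m=p^{\,r-s}$ (an element of $1+p^s\mathbb{Z}$ not in $1+p^{s+1}\mathbb{Z}$ has this order), so LTE gives $v_p\bigl((u^m-1)/(u-1)\bigr)=v_p(m)=r-s$, \emph{not} $s$. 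With this corrected identity you get $|\mathrm{N}_G M|=p^{\,r-(r-s)}=p^{\,s}=|M^G|$ immediately, and since $\mathrm{N}_G M\subseteq M^G$ you are done. Note too that your logical step ``$|M^G|=|M/\mathrm{N}_G M|$ hence $|\mathrm{N}_G M|=|M^G|$'' is invalid as written (it would force $|M|=|M^G|^2$); once the valuation is computed correctly this step is simply unnecessary. Your $p=2$ sketch inherits the same miscount and has an additional slip: since $(\mathbb{Z}/2^r)^\times$ is a $2$-group, a faithful cyclic $G$ has $2$-power order, so the branch ``$m$ odd'' is vacuous unless $G$ is trivial. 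The genuine case split at $p=2$ is between $u\equiv 1\pmod 4$ (where the $2$-adic LTE applies as above), the element $u=1+2^{r-1}$ of order $2$ (where $\mathrm{N}_G=u+1$ has $v_2=1$ for $r\geq 3$, giving $|\mathrm{N}_G M|=2^{\,r-1}=|M^G|$), and the excluded case $u=-1$.
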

\begin{proof}
See \cite{We2}, Lemma 3.2 and Remark 3.2.1.
\end{proof}

\begin{lemma}\label{surj}
Let $v\nmid p$ be a finite place of $F$. If $w$ is a place of $F_{i,n}$ above $v$, then the corestriction homomorphisms 
$$H^0((k_{i,n})_w,\,\mu_{p^{n}}^{\otimes i})\longrightarrow H^0(k_v,\,\mu_{p^{n}}^{\otimes i})$$ 
are surjective if $p$ is odd or $n$ is large enough.
\end{lemma}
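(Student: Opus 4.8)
The plan is to identify the corestriction map with a norm map between coinvariants and invariants, and then invoke Lemma \ref{we}. First I would use Lemma \ref{cf} to rewrite $(k_{i,n})_w=k_v(\mu_{p^n}^{\otimes i})$, which by the convention on $E(M)$ is the fixed field of the kernel of the $G_{k_v}$-action on $M:=\mu_{p^n}^{\otimes i}$; hence $G:=\mathrm{Gal}((k_{i,n})_w/k_v)$ acts faithfully on $M$. Next I would record two structural facts: $G$ is finite cyclic, being a quotient of $G_{k_v}\cong\widehat{\mathbb{Z}}$ (topologically generated by the Frobenius $\phi_v$), and $M$ is cyclic of order $p^n$ as an abelian group. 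Since $G_{(k_{i,n})_w}$ acts trivially on $M$ we have $H^0((k_{i,n})_w,\,\mu_{p^n}^{\otimes i})=M$; since the $G_{k_v}$-action on $M$ factors through $G$ we have $H^0(k_v,\,\mu_{p^n}^{\otimes i})=M^G$; and since the residue extension $(k_{i,n})_w/k_v$ is Galois with group $G$, the corestriction map between these two groups coincides, in degree $0$, with the norm (trace) map $m\mapsto\sum_{g\in G}g\cdot m$, whose image equals the image of the induced map $M_G\to M^G$. So the lemma reduces to showing that this norm map is surjective.

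For $p$ odd, surjectivity is immediate from the first part of Lemma \ref{we}. For $p=2$, the second part of Lemma \ref{we} gives surjectivity provided $G\ne\{\pm1\}$, so the remaining task is to exclude this exceptional configuration once $n$ is large. Here I would set $q=|k_v|$, which is odd since $v\nmid p=2$: the Frobenius $\phi_v$ acts on $\mu_{2^n}^{\otimes i}$ by multiplication by $q^i$, so $G$ is identified with the subgroup $\langle q^i\rangle$ of $(\mathbb{Z}/2^n\mathbb{Z})^\times$, and $G=\{\pm1\}$ forces $q^i\equiv-1\pmod{2^n}$, i.e.\ $2^n\mid q^i+1$. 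As $q^i+1$ is a fixed positive integer, only finitely many $n$ admit this, so for all sufficiently large $n$ we have $G\ne\{\pm1\}$ and the norm map — and hence the corestriction — is surjective.

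The only point demanding care is the case $p=2$: Lemma \ref{we} permits a single bad isomorphism type for $G$, and the computation above is needed to see that it occurs only for the finitely many $n$ with $2^n\mid q^i+1$. This does make ``$n$ large enough'' depend on the place $v$ (through $q=|k_v|$) and on $i$, but that is harmless, since $v$ and $i$ are fixed throughout the statement.
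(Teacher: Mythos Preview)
Your proof is correct and follows essentially the same route as the paper's: identify the corestriction with the norm $M\to M^G$ for $M=\mu_{p^n}^{\otimes i}$ and $G=\mathrm{Gal}((k_{i,n})_w/k_v)$, check that $G$ is cyclic and acts faithfully (via Lemma~\ref{cf}), and invoke Lemma~\ref{we}. The paper simply asserts that $G\ne\{\pm1\}$ for $n$ large, whereas you supply the explicit reason (the Frobenius acts by $q^i$, and $q^i\equiv-1\pmod{2^n}$ can hold for only finitely many $n$); this is a welcome clarification rather than a different argument.
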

\begin{proof}
We have 
$$H^0((k_{i,n})_w,\,\mu_{p^{n}}^{\otimes i})=\mu_{p^{n}}^{\otimes i}\quad\textrm{and}\quad H^0(k_v,\,\mu_{p^{n}}^{\otimes i})=H^0((k_{i,n})_w,\,\mu_{p^{n}}^{\otimes i})^{\mathrm{Gal}((k_{i,n})_w/k_v)}=(\mu_{p^{n}}^{\otimes i})^{\mathrm{Gal}((k_{i,n})_w/k_v)}$$ 
for any $n\in \mathbb{N}$. In particular the corestriction coincides with the norm $\mu_{p^{n}}^{\otimes i}\to(\mu_{p^{n}}^{\otimes i})^{\mathrm{Gal}((k_{i,n})_w/k_v)}$. We then conclude by Lemma \ref{we} with $M=\mu_{p^{n}}^{\otimes i}$ and $G=\mathrm{Gal}((k_{i,n})_w/k_v)$ whose hypotheses are satisfied since $\mathrm{Gal}((k_{i,n})_w/k_v)$ is always cyclic (since finite fields of odd characteristic are nonexceptional), acts faithfully on $\mu_{p^{n}}^{\otimes i}$ by Lemma \ref{cf} and is different from $\{\pm1\}$ if $n$ is large enough.   
\end{proof}

\begin{lemma}\label{we1}
Let $c:\bigoplus_{w|p}\mu_{p^n}^{\otimes i}(F)\to \mu_{p^{n}}^{\otimes i}(F)$ be the codiagonal map $(\zeta_w)_{w} \mapsto \prod_{w}\zeta_w$. Then $c$ is a surjective map of $\Gamma_{i,n}$-modules and, for any $j\geq 1$, $H_j(\Gamma_{i,n},\,\mathrm{Ker}\, c)=0$ if $p$ is odd or $n$ is large enough.  
\end{lemma}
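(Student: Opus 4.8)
The plan is to unwind the codiagonal map $c$ as a map of induced modules and then apply Lemma \ref{we}. First I would observe that $\bigoplus_{w|p}\mu_{p^n}^{\otimes i}(F)$ is the permutation module on the places of $F_{i,n}$ above $p$ — more precisely, fixing one place $w_0|p$ in $F_{i,n}$ with decomposition group $D \subseteq \Gamma_{i,n}$, one has $\bigoplus_{w|p}\mu_{p^n}^{\otimes i}(F) \cong \mathrm{Ind}_{D}^{\Gamma_{i,n}}\mu_{p^n}^{\otimes i}(F)$, where on the right $\mu_{p^n}^{\otimes i}(F)$ carries the trivial $D$-action (it is already defined over $F$). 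Under this identification the codiagonal $c$ becomes the corestriction / trace map $\mathrm{Ind}_{D}^{\Gamma_{i,n}}\mu_{p^n}^{\otimes i}(F)\to \mu_{p^n}^{\otimes i}(F)$, so in particular it is $\Gamma_{i,n}$-equivariant, and it is surjective because the component at $w_0$ alone already surjects.

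Next I would compute the homology of the kernel. From the short exact sequence $0\to \mathrm{Ker}\,c\to \bigoplus_{w|p}\mu_{p^n}^{\otimes i}(F)\to \mu_{p^n}^{\otimes i}(F)\to 0$ and the long exact homology sequence for $\Gamma_{i,n}$, together with Shapiro's lemma $H_j(\Gamma_{i,n},\mathrm{Ind}_{D}^{\Gamma_{i,n}}\mu_{p^n}^{\otimes i}(F))\cong H_j(D,\mu_{p^n}^{\otimes i}(F))$, it suffices to understand $H_j(D,\mu_{p^n}^{\otimes i}(F))$ for $j\geq 1$ and the cokernel of the norm map $H_0(D,\mu_{p^n}^{\otimes i}(F))\to H_0(\Gamma_{i,n},\mu_{p^n}^{\otimes i}(F))$, which is $H_0(\Gamma_{i,n}/D, \cdot)$-type data. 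But here the action of $D$ (indeed of all of $\Gamma_{i,n}$) on $\mu_{p^n}^{\otimes i}(F)$ is trivial by definition of $F_{i,n}$, so this does not immediately vanish; instead the point is that $c$ is split as soon as one of the local factors contributes an isomorphism, which happens when the norm from $\mathrm{Gal}((k_{i,n})_{w_0}/k_v)$ — no, more to the point — when $D$ is cyclic and acts faithfully on a \emph{larger} cyclotomic module. The cleanest route is to mimic Lemma \ref{surj}: realize that it is enough to show the norm map for the $\Gamma_{i,n}$-action on $\bigoplus_{w|p}\mu_{p^n}^{\otimes i}(F)$ is surjective onto $\mu_{p^n}^{\otimes i}(F)$, and then note $\mathrm{Ker}\,c$ is a direct summand of a permutation module minus a copy, reducing everything via Lemma \ref{we} applied to $\mathrm{Gal}(F(\mu_{p^n})/F)$ acting on $\mu_{p^n}$.

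Concretely, I would argue as follows. Let $\Delta=\mathrm{Gal}(F(\mu_{p^n})/F)$ and $P=\mathrm{Gal}(F(\mu_{p^n})/F_{i,n})$, so $\Gamma_{i,n}=\Delta/P$. The module $\mu_{p^n}^{\otimes i}$ over $\Delta$ is faithful on the quotient $\Delta/\ker(\chi^i)$ where $\chi$ is the cyclotomic character; $\mu_{p^n}^{\otimes i}(F)=(\mu_{p^n}^{\otimes i})^{\Delta}$ is cyclic as an abelian group of some order $p^m$. I would compare $c$ with the analogous codiagonal over $F(\mu_{p^n})$, where by Lemma \ref{we} (applied with $G$ the relevant cyclic decomposition group, which differs from $\{\pm 1\}$ once $n$ is large or $p$ is odd, since finite fields are nonexceptional) the norm maps are surjective, and then descend along $P$. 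The surjectivity of $c$ itself is formal. For the homology vanishing, the exact sequence gives $H_j(\Gamma_{i,n},\mathrm{Ker}\,c)\cong H_{j+1}(\Gamma_{i,n},\mathrm{coker})\oplus(\text{stuff})$ controlled by $H_*(D,\mu_{p^n}^{\otimes i}(F))$ via Shapiro, and the hypothesis ``$p$ odd or $n$ large'' is exactly what makes the relevant decomposition group cyclic and distinct from $\{\pm1\}$, so Lemma \ref{we} kills the obstruction. The main obstacle I expect is bookkeeping the transition between the $\Delta$-level statement (where Lemma \ref{we} applies directly to a faithful cyclic module) and the $\Gamma_{i,n}$-level statement, since $\mu_{p^n}^{\otimes i}(F)$ is $\Gamma_{i,n}$-trivial and $\mathrm{Ker}\,c$ is not a permutation module; I would handle this by working with the induced-module description throughout and invoking Shapiro's lemma to reduce to a decomposition-subgroup computation, where the faithfulness hypothesis needed for Lemma \ref{we} is genuinely available.
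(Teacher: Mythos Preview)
Your proposal rests on a misreading of the notation. In this paper, $\mu_{p^n}(E)$ denotes the group of $p^n$-th roots of unity \emph{in an algebraic closure} of $E$ (see the beginning of Section~\ref{mr}), not the subgroup of $E$-rational roots of unity. Thus $\mu_{p^n}^{\otimes i}(F)$ is the full cyclic group of order $p^n$, and by the very definition of $F_{i,n}=F(\mu_{p^n}^{\otimes i})$ the group $\Gamma_{i,n}$ acts \emph{faithfully} on it, not trivially. Your statements that ``$\mu_{p^n}^{\otimes i}(F)$ carries the trivial $D$-action (it is already defined over $F$)'' and ``the action of $D$ (indeed of all of $\Gamma_{i,n}$) on $\mu_{p^n}^{\otimes i}(F)$ is trivial by definition of $F_{i,n}$'' are therefore exactly backwards, and the subsequent detour through $\Delta=\mathrm{Gal}(F(\mu_{p^n})/F)$ and a descent along $P$ is an attempt to work around a difficulty that does not exist.

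Once the notation is read correctly, the argument is immediate and is what the paper does. One writes $\bigoplus_{w\mid p}\mu_{p^n}^{\otimes i}=\bigoplus_{v\mid p}\mathrm{Ind}_{D_v}^{\Gamma_{i,n}}\mu_{p^n}^{\otimes i}$ (note there may be several primes $v$ of $F$ above $p$, not just one), so Shapiro's lemma gives $H_j(\Gamma_{i,n},\bigoplus_{w\mid p}\mu_{p^n}^{\otimes i})=\bigoplus_{v\mid p}H_j(D_v,\mu_{p^n}^{\otimes i})$. Since $\Gamma_{i,n}$ acts faithfully on the cyclic $p$-group $\mu_{p^n}^{\otimes i}$, so does each subgroup $D_v$; and $\Gamma_{i,n}$ (hence each $D_v$) is cyclic. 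Lemma~\ref{we} then gives $H_j(\Gamma_{i,n},\mu_{p^n}^{\otimes i})=0=H_j(D_v,\mu_{p^n}^{\otimes i})$ for all $j\geq 1$ whenever $p$ is odd or $n$ is large enough, and the long exact homology sequence for $0\to\mathrm{Ker}\,c\to\bigoplus_{w\mid p}\mu_{p^n}^{\otimes i}\to\mu_{p^n}^{\otimes i}\to 0$ finishes the proof. No passage to $F(\mu_{p^n})$ or comparison of levels is needed.
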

\begin{proof}
If $D_v$ denotes the decomposition group at $v$ in $F_{i,n}/F$, we have 
$$H_j(\Gamma_{i,n},\,\bigoplus_{w|p}\mu_{p^n}^{\otimes i}) =H_j(\Gamma_{i,n},\,\bigoplus_{v|p}\mathrm{Ind}_{D_v}^{\Gamma_{i,n}}\mu_{p^n}^{\otimes i}) =\bigoplus_{v|p}H_j(D_v,\,\mu_{p^n}^{\otimes i})$$
by Shapiro's lemma, for any $j\geq0$. Moreover if $p$ is odd or $n$ is large enough 
$$H_j(\Gamma_{i,n},\,\mu_{p^n}^{\otimes i})=0=H_j(D_v,\,\mu_{p^n}^{\otimes i})$$
for any $j\geq 1$ by Lemma \ref{we} and the result follows by considering the $\Gamma_{i,n}$-homology sequence of the exact sequence $$0\to \mathrm{Ker}\,c \to\bigoplus_{w|p}\mu_{p^n}^{\otimes i}(F)\stackrel{c}{\longrightarrow} \mu_{p^{n}}^{\otimes i}(F)\to 0$$
\end{proof}

\begin{lemma}\label{we2}
Suppose that $p$ is an odd prime or $F$ is nonexceptional. For every $n\in\mathbb{N}$, the corestriction map induces an isomorphism $$H^{2}(G_{S_{i,n}},\,\mu_{p^{n}}^{\otimes i})_{\Gamma_{i,n}}\stackrel{\sim}{\longrightarrow}H^{2}(G_{S},\,\mu_{p^{n}}^{\otimes i})$$
\end{lemma}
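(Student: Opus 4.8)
The plan is to use the Hochschild--Serre spectral sequence attached to the short exact sequence of profinite groups underlying the situation. First I would observe that $F_{i,n}\subseteq F(\mu_{p^n})\subseteq F_S$ and that the maximal extension of $F_{i,n}$ unramified outside $S_{i,n}$ is again $F_S$ (over $F$ it is unramified outside $p$ and $\infty$, and $F_S$ is clearly unramified outside $S_{i,n}$ over $F_{i,n}$); hence $G_{S_{i,n}}$ is a normal open subgroup of $G_S$ with quotient $\Gamma_{i,n}$, so restriction and corestriction between $H^\ast(G_S,-)$ and $H^\ast(G_{S_{i,n}},-)$ are defined and $\Gamma_{i,n}$-equivariant. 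Write $M=\mu_{p^n}^{\otimes i}$; by definition $F_{i,n}=F(M)$, so $\Gamma_{i,n}$ acts \emph{faithfully} on $M$, which is cyclic of order $p^n$. Under the hypothesis $\Gamma_{i,n}$ is cyclic: for $p$ odd it is a quotient of a subgroup of the cyclic group $(\mathbb{Z}/p^n\mathbb{Z})^\times$, while for $p=2$ nonexceptionality forces $\mathrm{Gal}(F(\mu_{2^\infty})/F)$ to be a torsion-free, hence procyclic, closed subgroup of $\mathbb{Z}_2^\times$. Finally $\mathrm{cd}_p(G_S)\le2$ and $\mathrm{cd}_p(G_{S_{i,n}})\le2$ by \cite{NSW}, (8.3.17) and (8.3.18) (for $p=2$ using that $F$, hence $F_{i,n}$, is totally imaginary).

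The core of the proof is the assertion that $H^b(G_{S_{i,n}},M)$ is a cohomologically trivial $\Gamma_{i,n}$-module for $b=0$ and $b=1$. Granting this, consider the spectral sequence $E_2^{a,b}=H^a(\Gamma_{i,n},H^b(G_{S_{i,n}},M))\Rightarrow H^{a+b}(G_S,M)$. Since $H^b(G_{S_{i,n}},M)=0$ for $b\ge3$ and $H^m(G_S,M)=0$ for $m\ge3$, every differential into or out of $E_r^{a,2}$ with $a\ge1$ has source or target either in a row $b\ge3$, or (for $r\le3$) in the rows $b=0,1$ in strictly positive $\Gamma_{i,n}$-degree, and hence vanishes; since moreover $E_\infty^{a,2}$ is a subquotient of $H^{a+2}(G_S,M)=0$, we obtain $H^a(\Gamma_{i,n},H^2(G_{S_{i,n}},M))=0$ for $a\ge1$, so $H^2(G_{S_{i,n}},M)$ is $\Gamma_{i,n}$-cohomologically trivial as well. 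The same vanishing of low-degree terms shows that the edge map $\mathrm{res}\colon H^2(G_S,M)\to H^2(G_{S_{i,n}},M)^{\Gamma_{i,n}}$ is an isomorphism. Since $H^2(G_{S_{i,n}},M)$ is a cohomologically trivial module over the finite cyclic group $\Gamma_{i,n}$, the norm induces an isomorphism $H^2(G_{S_{i,n}},M)_{\Gamma_{i,n}}\to H^2(G_{S_{i,n}},M)^{\Gamma_{i,n}}$; combining this with the identity $\mathrm{res}\circ\mathrm{cor}=N_{\Gamma_{i,n}}$ on $H^2(G_{S_{i,n}},M)$ yields that corestriction induces the desired isomorphism $H^2(G_{S_{i,n}},M)_{\Gamma_{i,n}}\stackrel{\sim}{\longrightarrow}H^2(G_S,M)$. (Equivalently one may dualize via Poitou--Tate and carry out the codescent at the level of $H^1_c$, where $H^2(G_S,M)$ appears as an extension of a global, class-group-type group by a semilocal-at-$p$ term; this needs the same ingredients as below.)

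It remains to prove the two cohomological triviality statements. For $b=0$ one has $H^0(G_{S_{i,n}},M)=M=\mu_{p^n}^{\otimes i}$, a faithful cyclic $\Gamma_{i,n}$-module of $p$-power order; Lemma \ref{we} then gives $H_j(\Gamma_{i,n},M)=0$ for all $j\ge1$, which for a finite cyclic group is equivalent to cohomological triviality (one has to check the harmless hypothesis of Lemma \ref{we} when $p=2$, namely that $\Gamma_{i,n}$ is not the group of order $2$ acting by $m\mapsto-m$, a case in which both sides of the lemma in fact vanish and can be treated directly). The case $b=1$ is the main obstacle. Here I would use the Kummer-theoretic description of $H^1(G_{S_{i,n}},\mu_{p^n}^{\otimes i})$ coming from the exact sequence $0\to\mathbb{Z}_p(i)\stackrel{p^n}{\longrightarrow}\mathbb{Z}_p(i)\to\mu_{p^n}^{\otimes i}\to0$ together with the known structure of $H^1(G_{S_{i,n}},\mathbb{Z}_p(i))$ (cf. Lemma \ref{pn}): this exhibits $H^1(G_{S_{i,n}},\mu_{p^n}^{\otimes i})$, up to extensions, in terms of the $S_{i,n}$-units of $F_{i,n}$, the residue-field contributions at the primes above $p$, and the $p$-part of the $S_{i,n}$-class group of $F_{i,n}$, all Tate-twisted by $\mu_{p^n}^{\otimes i}$. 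Cohomological triviality of each piece over $\Gamma_{i,n}$ is then reduced, via Shapiro's lemma, to: Lemma \ref{we1} for the semilocal-at-$p$ part governed by the codiagonal map $\bigoplus_{w|p}\mu_{p^n}^{\otimes i}\to\mu_{p^n}^{\otimes i}$; Lemma \ref{surj} for the residue-field terms; and an ambiguous-class-number (genus-theory) argument, again resting on Lemma \ref{we}, for the class-group part. This last point is the delicate one: it is precisely where the faithfulness of the $\Gamma_{i,n}$-action on $\mu_{p^n}^{\otimes i}$, the hypothesis ``$p$ odd or $F$ nonexceptional'', and the behaviour of the $i$-fold twist must all be brought together.
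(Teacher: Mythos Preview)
Your approach is considerably more involved than the paper's, and it has a real gap at the step you yourself call ``the delicate one.''

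The paper simply cites \cite{We2}, Proposition~2.2, together with $\mathrm{cd}_p(G_S)\le 2$. The principle behind that citation is purely formal and needs none of the extra structure you invoke: whenever $\mathrm{cd}_p(G)\le d$ and $H\trianglelefteq G$ is open, corestriction induces an isomorphism $H^d(H,M)_{G/H}\xrightarrow{\sim}H^d(G,M)$ for every discrete $p$-primary $G$-module $M$. Indeed both $M\mapsto H^d(H,M)_{G/H}$ and $M\mapsto H^d(G,M)$ are right exact (the former because $\mathrm{cd}_p(H)\le d$ and coinvariants are right exact), the natural transformation $\mathrm{cor}$ between them is an isomorphism on induced modules by Shapiro, and every $M$ admits a presentation $\mathrm{Ind}_H^G N_1\to\mathrm{Ind}_H^G N_0\to M\to 0$. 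In particular nothing about $H^0$ or $H^1$, the cyclicity of $\Gamma_{i,n}$, or the faithfulness of its action on $\mu_{p^n}^{\otimes i}$ is required.

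Your route, by contrast, needs the $\Gamma_{i,n}$-cohomological triviality of $H^1(G_{S_{i,n}},\mu_{p^n}^{\otimes i})$, and the sketch you give does not establish it. Lemma~\ref{pn} says nothing about the structure of $H^1(G_{S_{i,n}},\mathbb{Z}_p(i))$; the $\mathbb{Z}_p(i)$-sequence only exhibits $H^1(G_{S_{i,n}},\mu_{p^n}^{\otimes i})$ as an extension of $H^2(G_{S_{i,n}},\mathbb{Z}_p(i))[p^n]$ by $H^1(G_{S_{i,n}},\mathbb{Z}_p(i))/p^n$, and neither piece has a transparent description in terms of $S$-units, semilocal data and class groups ``Tate-twisted by $\mu_{p^n}^{\otimes i}$'' (note that $F_{i,n}=F(\mu_{p^n}^{\otimes i})$ need not contain $\mu_{p^n}$, so there is no Kummer sequence to fall back on). Lemmas~\ref{we1} and~\ref{surj} concern constituents of $H^2(G_{S_{i,n}},\mu_{p^n}^{\otimes i+1})$ from Proposition~\ref{ke}, not of the $H^1$ at hand, and the ``ambiguous-class-number (genus-theory) argument'' is only named, not carried out. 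This unproved step is exactly what the direct argument above avoids.
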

\begin{proof}
Apply \cite{We2}, Proposition 2.2, using the fact that $G_S$ has $p$-cohomological dimension less or equal to $2$ (see the proof of Lemma \ref{pn}).
\end{proof}

\begin{notation}
For any $i\geq1$ and any $n\in \mathbb{N}$, we will denote by $Cl^{S_{i,n}}_{F_{i,n}}$ the class group of $\mathcal{O}_{F_{i,n}}^{S_{i,n}}$ (see Notation \ref{sv}). We will also use the notation $Cl_F^S$ for $Cl^{S_{i,0}}_{F_{i,0}}$. 
\end{notation}

\begin{remark}
Note that if $p$ is odd or $F$ is nonexceptional, $Cl^{S_{i,n}}_{F_{i,n}}$ is also equal to the quotient of the narrow class group of $F_{i,n}$ by the subgroup generated by the classes of finite primes in $S_{i,n}$. 
\end{remark}

The following result is the main ingredient of the proof of Theorem \ref{gh}. 

\begin{prop}[Tate-Keune-Weibel]\label{ke}
For every $n\in \mathbb{N}$ and every $i\geq 1$ there is an exact sequence of $\Gamma_{i,n}$-modules as follows 
$$0\to Cl^{S_{i,n}}_{F_{i,n}}\otimes\mu_{p^n}^{\otimes i}\stackrel{J_{i,n}}{\longrightarrow}
H^{2}(G_{S_{i,n}},\,\mu_{p^{n}}^{\otimes i+1})\stackrel{inv}{\longrightarrow} \displaystyle{\bigoplus_{w|p}\mu_{p^n}^{\otimes i}}\stackrel{c}{\longrightarrow} 
\mu_{p^{n}}^{\otimes i}\to 
0$$
where $c$ is the map of Lemma \ref{we1} and $inv$ is the twist by $\mu_{p^n}^{\otimes i}$ of the direct sum of the local invariant maps (for the definition of $J_{i,n}$, see Remark \ref{kum} below). Moreover, if $p$ is odd or $n$ is large enough, taking coinvariants by $\Gamma_{i,n}$ gives
$$0\to \left(Cl^{S_{i,n}}_{F_{i,n}}\otimes\mu_{p^n}^{\otimes i}\right)_{\Gamma_{i,n}}\stackrel{J_{i,n}}{\longrightarrow}
H^{2}(G_{S},\,\mu_{p^{n}}^{\otimes i+1})\stackrel{inv}{\longrightarrow} \displaystyle{(\bigoplus_{w|p}\mu_{p^n}^{\otimes i})_{\Gamma_{i,n}}\stackrel{c}{\longrightarrow} 
(\mu_{p^{n}}^{\otimes i})_{\Gamma_{i,n}}\to 
0}$$
\end{prop}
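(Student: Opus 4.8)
The plan is to first establish the exact sequence of $\Gamma_{i,n}$-modules and then deduce the coinvariant version by a homological argument. For the first sequence, I would start from the well-known description of $H^2(G_{S_{i,n}},\mu_{p^n}^{\otimes i+1})$ coming from Tate-Poitou duality (or, equivalently, the nine-term Poitou-Tate exact sequence) applied to the field $F_{i,n}$, which contains $\mu_{p^n}$. Over $F_{i,n}$ we have $\mu_{p^n}^{\otimes i+1}\cong \mu_{p^n}\otimes\mu_{p^n}^{\otimes i}$ and, since $\mu_{p^n}^{\otimes i}$ is a trivial $G_{S_{i,n}}$-module (this is precisely the point of passing to $F_{i,n}$), cohomology of $\mu_{p^n}^{\otimes i+1}$ is just cohomology of $\mu_{p^n}$ tensored with the fixed module $\mu_{p^n}^{\otimes i}$. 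So it suffices to produce, over $F_{i,n}$, the exact sequence
$$0\to Cl^{S_{i,n}}_{F_{i,n}}/p^n\to H^2(G_{S_{i,n}},\mu_{p^n})\xrightarrow{inv}\bigoplus_{w|p}\mathbb{Z}/p^n\xrightarrow{c}\mathbb{Z}/p^n\to 0,$$
which is Tate's classical computation (as in Keune, building on Tate): the kernel of the sum-of-local-invariants map on $H^2(G_{S_{i,n}},\mu_{p^n})\cong \mathrm{Br}(F_{i,n})[p^n]$-with-$S_{i,n}$-support identifies with the $p^n$-torsion quotient of the $S_{i,n}$-class group via Kummer theory, and the cokernel of $inv$ is $\mathbb{Z}/p^n$ by the fundamental exact sequence of global class field theory (the sum of invariants is surjective with kernel the image of the global Brauer group). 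Tensoring with the free rank-one $\mathbb{Z}/p^n$-module $\mu_{p^n}^{\otimes i}$ preserves exactness and upgrades everything to $\Gamma_{i,n}$-equivariance, giving the stated sequence with $Cl^{S_{i,n}}_{F_{i,n}}\otimes\mu_{p^n}^{\otimes i}$ in place of $Cl^{S_{i,n}}_{F_{i,n}}/p^n\otimes\mu_{p^n}^{\otimes i}$ (these agree since $\mu_{p^n}^{\otimes i}$ is killed by $p^n$). The map $J_{i,n}$ is the resulting Kummer-theoretic connecting map, to be pinned down in Remark \ref{kum}.

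For the second (coinvariant) sequence, I would break the four-term sequence into two short exact sequences,
$$0\to Cl^{S_{i,n}}_{F_{i,n}}\otimes\mu_{p^n}^{\otimes i}\to H^{2}(G_{S_{i,n}},\mu_{p^{n}}^{\otimes i+1})\to I\to 0\quad\text{and}\quad 0\to I\to \bigoplus_{w|p}\mu_{p^n}^{\otimes i}\xrightarrow{c}\mu_{p^{n}}^{\otimes i}\to 0,$$
where $I=\ker c$. Take $\Gamma_{i,n}$-homology of each. From the second sequence and Lemma \ref{we1}, since $H_j(\Gamma_{i,n},I)=0$ for all $j\geq 1$ (when $p$ is odd or $n$ is large), we get $H_j(\Gamma_{i,n},\bigoplus_{w|p}\mu_{p^n}^{\otimes i})\cong H_j(\Gamma_{i,n},\mu_{p^n}^{\otimes i})$ for $j\geq 1$ and the short exact sequence of coinvariants $0\to I_{\Gamma_{i,n}}\to (\bigoplus_{w|p}\mu_{p^n}^{\otimes i})_{\Gamma_{i,n}}\xrightarrow{c}(\mu_{p^{n}}^{\otimes i})_{\Gamma_{i,n}}\to 0$. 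From the first sequence, taking homology and using $H_1(\Gamma_{i,n},I)=0$ gives exactness of $0\to (Cl^{S_{i,n}}_{F_{i,n}}\otimes\mu_{p^n}^{\otimes i})_{\Gamma_{i,n}}\to H^{2}(G_{S_{i,n}},\mu_{p^{n}}^{\otimes i+1})_{\Gamma_{i,n}}\to I_{\Gamma_{i,n}}\to 0$. Finally I would splice these together and invoke Lemma \ref{we2} to replace $H^{2}(G_{S_{i,n}},\mu_{p^{n}}^{\otimes i+1})_{\Gamma_{i,n}}$ by $H^{2}(G_{S},\mu_{p^{n}}^{\otimes i+1})$ via corestriction; chasing through identifies the composite maps with $J_{i,n}$, $inv$ and $c$ as claimed, yielding the four-term coinvariant sequence.

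The main obstacle, I expect, is the bookkeeping of homological vanishing: the whole coinvariant argument hinges on $H_1(\Gamma_{i,n},\ker c)=0$ and, more delicately, on the compatibility of corestriction from $F_{i,n}$ to $F$ with the local invariant maps at primes above $p$ (so that $inv$ descends to the codiagonal on $(\bigoplus_{w|p}\mu_{p^n}^{\otimes i})_{\Gamma_{i,n}}$). One must also check that Lemma \ref{we2}'s isomorphism is genuinely induced by corestriction and fits into the diagram with the connecting maps — this requires the $p$-cohomological dimension bound $\mathrm{cd}_p G_S\le 2$ (valid even at $p=2$ under the nonexceptionality hypothesis, as in the proof of Lemma \ref{pn}), together with the fact that $\mu_{p^n}^{\otimes i}$ is $G_{S_{i,n}}$-trivial so that Shapiro's lemma and the projection formula apply cleanly. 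The $p=2$ exceptional-field restrictions and the "$n$ large enough" caveat propagate exactly from Lemmas \ref{we}, \ref{we1} and the vanishing needed above, so care is needed to state them uniformly. The construction of $J_{i,n}$ itself is classical Kummer theory and should not pose difficulty beyond fixing conventions, which is deferred to Remark \ref{kum}.
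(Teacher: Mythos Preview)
Your proposal is correct and follows essentially the same route as the paper. The paper simply cites Weibel's Proposition 4.1 for the first exact sequence (your sketch via Poitou--Tate and twisting by the trivial module $\mu_{p^n}^{\otimes i}$ is the standard unpacking of that), and for the coinvariant sequence it splits into two short exact sequences and applies Lemmas \ref{we}, \ref{we1}, \ref{we2} exactly as you do.

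One small logical slip: from $H_j(\Gamma_{i,n},I)=0$ for $j\geq 1$ alone you do \emph{not} get $H_1(\Gamma_{i,n},\bigoplus_{w|p}\mu_{p^n}^{\otimes i})\cong H_1(\Gamma_{i,n},\mu_{p^n}^{\otimes i})$, since the long exact sequence only yields injectivity at $j=1$ (the next term is $H_0(\Gamma_{i,n},I)=I_{\Gamma_{i,n}}$, which is nonzero). What you actually need to make the sequence $0\to I_{\Gamma_{i,n}}\to(\bigoplus_{w|p}\mu_{p^n}^{\otimes i})_{\Gamma_{i,n}}\to(\mu_{p^n}^{\otimes i})_{\Gamma_{i,n}}\to 0$ exact on the left is $H_1(\Gamma_{i,n},\mu_{p^n}^{\otimes i})=0$, which comes directly from Lemma \ref{we} (and is precisely the extra ingredient the paper invokes). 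With that in place, your splicing argument goes through.
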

\begin{proof}
The first sequence is exact by Proposition 4.1 of \cite{We2}. The second follows from the first by taking $\Gamma_{i,n}$-coinvariants and using Lemma \ref{we1}, Lemma \ref{we2} and $H_1(\Gamma_{i,n},\,\mu_{p^{n}}^{\otimes i})=0$ (by Lemma \ref{we}).
\end{proof}

\begin{remark}\label{kum} 
We briefly recall the definition of the map $J_{i,n}$ since it will be used later on. To ease notation, set $E_{i,n}=(F_{i,n})_{S_{i,n}}$ (see Notation \ref{sv}) and denote by $\mathcal{O}_{E_{i,n}}$ the ring of integers of $E_{i,n}$. The first ingredient we need is the isomorphism (see \cite{NSW}, Proposition 8.3.10)
$$f:Cl_{F_{i,n}}^{S_{i,n}}/p^n\stackrel{\sim}{\longrightarrow} H^1(G_{S_{i,n}},\,\mathcal{O}_{E}^\times)/p^n$$
which is defined as follows: if $\mathfrak{a}$ is a fractional ideal of $F_{i,n}$ (not divisible by $p$), then $f([\mathfrak{a}])$ is represented by the continuous $1$-cocycle $\sigma\mapsto \sigma(\alpha)/\alpha$ where $\alpha\in E$ is such that $\alpha\mathcal{O}_E=\mathfrak{a}\mathcal{O}_E$. 
Now there is an exact sequence (see \cite{NSW}, Proposition 8.3.3)
$$0\to \mu_{p^n}\to \mathcal{O}_{E}^\times \stackrel{p^n}{\longrightarrow} \mathcal{O}_{E}^\times \to 0$$
Taking $G_{S_{i,n}}$-cohomology gives an injective map 
$$g:H^1(G_{S_{i,n}},\,\mathcal{O}_{E}^\times)/p^n\to H^2(G_{S{i,n}},\,\mu_{p^n})$$
Set $g\circ f=J'_{i,n}$: then $J_{i,n}$ is just $J'_{i,n}$ twisted by $\mu_{p^n}^{\otimes i}$.  
\end{remark}

The following two lemmas clarify the connection between $\Omega_{i,n}^{(p)}$ whose precise description is given in Theorem \ref{gh}.  

\begin{lemma}\label{dic}
Suppose that $p$ is an odd prime or $F$ is nonexceptional. Then, for each $n\in \mathbb{N}$ and each $i,\,j\geq 1$, the following diagram is commutative
\begin{equation}\label{gigino}
\begin{CD}
\displaystyle{\bigoplus_{w\nmid p\infty}H^{1}((k_{i,n})_w,\,\mathbb{Z}_p(j))[p^n]} @>I^c_{F_{i,n},j,n}>> H^2(G_{S_{i,n}},\,\mathbb{Z}_p(j+1))/p^n\\
@A AA @V VV\\
\displaystyle{\bigoplus_{w\nmid p\infty}H^{0}((k_{i,n})_w,\,\mu_{p^n}^{\otimes j})} @>-T_{F_{i,n},j,n}>> H^2(G_{S_{i,n}},\,\mu_{p^n}^{\otimes j+1})\\
\end{CD}
\end{equation}
Here the left-hand and the right vertical maps are the isomorphisms of Lemma \ref{pn}, the map $I^c_{i,j,n}$ is induced by the snake lemma on the exact sequence (\ref{ces}) and the map $T_{F_{i,n},j,n}$ is the one appearing in the long exact localization sequence
\begin{equation}\label{mupnls}
0\to H^1(G_{S_{i,n}},\,\mu_{p^n}^{\otimes j+1})\to H^1(F_{i,n},\,\mu_{p^n}^{\otimes j+1})\to \bigoplus_{w\nmid p\infty}H^{0}((k_{i,n})_w,\,\mu_{p^n}^{\otimes j})\stackrel{T_{F_{i,n},j,n}}{\longrightarrow}H^2(G_{S_{i,n}},\,\mu_{p^n}^{\otimes j+1})\to \ldots
\end{equation} 
(see \cite{So}, Proposition 1).
\end{lemma}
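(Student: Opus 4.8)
The plan is to establish the commutativity by unwinding both paths into statements about the Bockstein/connecting homomorphisms attached to the two short exact sequences of coefficients that are in play, namely
$$0\to\mathbb{Z}_p(j)\stackrel{p^n}{\longrightarrow}\mathbb{Z}_p(j)\longrightarrow\mu_{p^n}^{\otimes j}\to0\qquad\text{and}\qquad 0\to\mathbb{Z}_p(j+1)\stackrel{p^n}{\longrightarrow}\mathbb{Z}_p(j+1)\longrightarrow\mu_{p^n}^{\otimes j+1}\to0,$$
together with the localization sequences in degree $2$ with $\mathbb{Z}_p(j+1)$-coefficients (sequence (\ref{ces}) over $F_{i,n}$) and in degree $1$ with $\mu_{p^n}^{\otimes j+1}$-coefficients (sequence (\ref{mupnls})). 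The vertical isomorphisms of Lemma \ref{pn} are themselves induced by these coefficient sequences, so the diagram (\ref{gigino}) is, after expanding every arrow, an assertion that two composites of connecting maps coincide up to the stated sign.

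First I would reduce the direct sum over $w\nmid p\infty$ to a single place $w$, since every map in (\ref{gigino}) is compatible with the decomposition into places (the localization maps $I^c$ and $T$ are defined componentwise, and the isomorphisms of Lemma \ref{pn} were built place-by-place in that lemma's proof). Next I would replace the global $H^2(G_{S_{i,n}},-)$ terms by the corresponding cohomology of the full Galois group $H^2(F_{i,n},-)$ for the purposes of checking commutativity: the map $I^c_{F_{i,n},j,n}$ comes from the snake lemma applied to (\ref{ces}), and unravelling that snake lemma shows that on the component at $w$ the map $I^c$ is: lift a $p^n$-torsion class $x\in H^1(k_w,\mathbb{Z}_p(j))$ to $H^2(F_{i,n},\mathbb{Z}_p(j+1))_p$ via the tame symbol / residue splitting, multiply nothing (it is already $p^n$-divisible by construction of the snake connecting map), and record the class in $H^2(G_{S_{i,n}},\mathbb{Z}_p(j+1))/p^n$. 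Dually, $T_{F_{i,n},j,n}$ is literally the residue map in the localization sequence for $\mu_{p^n}^{\otimes j+1}$-cohomology. Thus both horizontal arrows are, up to the coefficient-Bockstein identifications, the same residue/transfer map at $w$, and the square reduces to the commutativity of the Bockstein connecting homomorphisms with the residue map — i.e. to the fact that the residue map is a map of long exact sequences in the coefficients, which is precisely the functoriality statement recorded in \cite{So}, Proposition 1, and \cite{Ta2}, Proposition 2.3.

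The remaining point, and the one requiring care, is the sign: why the bottom arrow must be $-T_{F_{i,n},j,n}$ rather than $+T_{F_{i,n},j,n}$. This sign arises because the snake-lemma connecting map for (\ref{ces}) and the connecting map in (\ref{mupnls}) sit on opposite sides of the coefficient Bockstein: concretely, in the commuting cube whose front and back faces are the localization sequences with $\mathbb{Z}_p(\ast)$- and $\mathbb{Q}_p/\mathbb{Z}_p(\ast)$-coefficients (or with $\mathbb{Z}_p(\ast)$- and $\mu_{p^n}^{\otimes\ast}$-coefficients) and whose vertical edges are Bocksteins, going around the two ways differs by the standard sign $(-1)$ coming from the anticommutativity of two connecting homomorphisms attached to a pair of short exact sequences fitting into a $3\times3$ diagram. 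I would pin this down by chasing an explicit cocycle representative: take $x\in H^1(k_w,\mathbb{Z}_p(j))[p^n]$, write $x=\delta^{-1}(\bar x)$ with $\bar x\in H^0(k_w,\mu_{p^n}^{\otimes j})$, lift $\bar x$ to a cochain on $F_{i,n}$, apply the residue and the two Bocksteins in both orders, and read off the sign.

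I expect this sign bookkeeping to be the main obstacle: the underlying commutativity is "formal" (functoriality of residue maps in the coefficients plus compatibility of two Bocksteins), but getting the sign exactly right — and matching the normalization conventions of \cite{So} and \cite{Ta2} for the local invariant and the connecting maps — is the delicate part. Everything else is a diagram chase combined with Shapiro's lemma to pass between a fixed place $w_0\mid v$ and the induced module $\bigoplus_{w\mid v}$, exactly as in the proof of Lemma \ref{pn}.
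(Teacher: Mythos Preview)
Your overall strategy is the same as the paper's: both horizontal maps are connecting/boundary homomorphisms for two different short exact sequences (localization and change of coefficients), and the content of the lemma is their compatibility up to a sign. However, the execution differs in a way that matters.

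First, your description of $I^c$ is garbled. There is no ``tame symbol / residue splitting'' available --- the whole point of the paper is to study when (\ref{ces}) splits. The snake-lemma connecting map is the usual one: lift a $p^n$-torsion class $x$ in the right-hand term to an arbitrary $\tilde x$ in the middle term, multiply by $p^n$, and take the resulting element of the left-hand term modulo $p^n$. Your sentence ``multiply nothing (it is already $p^n$-divisible\dots)'' is simply wrong.

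Second, and more substantively, the paper does \emph{not} argue via an abstract $3\times3$ anticommutativity principle. Instead it first passes (via \cite{Ta2}, \S2) to $\mathbb{Q}_p/\mathbb{Z}_p$-coefficients, then recognizes the localization sequence (\ref{mupnls}) as the low-degree exact sequence of the Hochschild--Serre spectral sequence for the extension $1\to H_{S_{i,n}}\to G_{F_{i,n}}\to G_{S_{i,n}}\to 1$, so that $T_{F_{i,n},j,n}$ becomes the transgression. It then carries out an explicit cocycle computation using the description of transgression in \cite{NSW}, Proposition~1.6.5, comparing the two composites on a representative $1$-cocycle and reading off the sign directly. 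The author even remarks afterwards that he does not know a proof avoiding this explicit calculation, and that the appearance of $-T$ rather than $+T$ is ``somehow unexpected.''

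Your plan to invoke the anticommutativity of two connecting maps in a $3\times3$ diagram is plausible in spirit, but you have not actually set up such a diagram: the localization sequence is not visibly a short exact sequence of complexes until you pass to the Leray/Hochschild--Serre framework, which is exactly the step the paper makes precise. Since you defer the sign computation (``I would pin this down by chasing an explicit cocycle representative'') without doing it, and since that computation is the entire content of the lemma, the proposal as written is a correct outline but not yet a proof.
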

\begin{proof}
It is easy to see, using the results of \cite{Ta2}, § 2, that the commutativity of (\ref{gigino}) is equivalent to the commutativity of the following diagram
\begin{equation}\label{gigetto}
\begin{CD}
\displaystyle{\bigoplus_{w\nmid p\infty}H^{0}((k_{i,n})_w,\,\mathbb{Q}_p/\mathbb{Z}_p(j))[p^n]} @>>> H^1(G_{S_{i,n}},\,\mathbb{Q}_p/\mathbb{Z}_p(j+1))/p^n\\
@A AA @V VV\\
\displaystyle{\bigoplus_{w\nmid p\infty}H^{0}((k_{i,n})_w,\,\mu_{p^n}^{\otimes j})} @>-T_{F_{i,n},j,n}>> H^2(G_{S_{i,n}},\,\mu_{p^n}^{\otimes j+1})\\
\end{CD}
\end{equation}
Here the left-hand vertical map is induced by the injection $\mu_{p^{n}}^{\otimes j}\to\mathbb{Q}_p/\mathbb{Z}_p(j)$ while the right-hand vertical map is the connecting homomorphisms for the exact sequence 
$$0\to\mu_{p^n}^{\otimes j+1}\to\mathbb{Q}_p/\mathbb{Z}_p(j+1)\stackrel{p^n}{\longrightarrow} \mathbb{Q}_p/\mathbb{Z}_p(j+1)\to 0$$
The upper horizontal map is induced by the snake lemma on the exact sequence at the bottom of diagram (\ref{qsz}).\\
Now recall that the exact sequence (\ref{mupnls}) comes from the Leray spectral sequence
$$H^{s}_{\acute{e}t}(\mathcal{O}_{F_{i,n}}^{S_{i,n}},\,R^t\iota_{\ast}\mu_{p^n}^{\otimes j+1})\Rightarrow H^{s+t}_{\acute{e}t}(F_{i,n},\,\mu_{p^n}^{\otimes j+1})$$
where $\iota: \mathrm{Spec}(F_{i,n})\to \mathrm{Spec}(\mathcal{O}_{F_{i,n}}^{S_{i,n}})$ is the morphism of schemes induced by $\mathcal{O}_{F_{i,n}}^{S_{i,n}}\hookrightarrow F_{i,n}$. The above spectral sequence 
can be rewritten as an Hochschild-Serre spectral sequence in Galois cohomology
$$H^{s}(G_{S_{i,n}},\,H^t(H_{S_{i,n}},\,\mu_{p^n}^{\otimes j+1}))\Rightarrow H^{s+t}(F_{i,n},\,\mu_{p^n}^{\otimes j+1})$$
where $H_{S_{i,n}}$ is the absolute Galois group of $(F_{i,n})_{S_{i,n}}$. Then the map $T_{i,j,n}$ becomes the transgression
$$tg:H^{0}(G_{S_{i,n}},\,H^1(H_{S_{i,n}},\,\mu_{p^n}^{\otimes j+1}))\to H^2(G_{S_{i,n}},\,\mu_{p^n}^{\otimes j+1})$$
Similarly, the upper horizontal map in (\ref{gigetto}) becomes the one induced by the snake lemma on the exact sequence
$$
0\to H^1(G_{S_{i,n}},\,\mathbb{Q}_p/\mathbb{Z}_p(j+1))/\mathrm{Div}\to H^{1}(F_{i,\,n},\,\mathbb{Q}_p/\mathbb{Z}_p(j+1))/\mathrm{Div}\to H^0(G_{S_{i,n}},\,H^1(H_{i,n},\,\mathbb{Q}_p/\mathbb{Z}_p(j+1))\to0
$$
(compare with the bottom row of diagram (\ref{qsz})). In other words the commutativity of (\ref{gigetto}) is equivalent to the commutativity of
$$
\begin{CD}
H^{0}(G_{S_{i,n}},\,H^1(H_{S_{i,n}},\,\mathbb{Q}_p/\mathbb{Z}_p(j+1)))[p^n] @>>> H^1(G_{S_{i,n}},\,\mathbb{Q}_p/\mathbb{Z}_p(j+1))/p^n\\
@A AA @V VV\\
H^{0}(G_{S_{i,n}},\,H^1(H_{S_{i,n}},\,\mu_{p^n}^{\otimes j+1})) @>-tg>> H^2(G_{S_{i,n}},\,\mu_{p^n}^{\otimes j+1})\\
\end{CD}
$$
Using the description of $tg$ given in \cite{NSW}, Proposition (1.6.5), it is not difficult to see that the above diagram is indeed commutative. We give here the explicit computation. Call
$\alpha$ the map obtained by composing the left vertical arrow, the upper horizontal arrow and the right vertical arrow in the above diagram. We have to prove that $\alpha=-tg$. Let $f:H_{S_{i,n}}\to \mu_{p^n}^{\otimes j+1}$ be a $1$-cocycle whose class in $H^1(H_{S_{i,n}},\,\mu_{p^n}^{\otimes j+1}))$ is fixed by $G_{S_{i,n}}$. Composing $f$ with the natural injection $\mu_{p^n}^{\otimes j+1}\to\mathbb{Q}_p/\mathbb{Z}_p(j+1)$ we get a $1$-cocycle of $H_{S_{i,n}}$ with values in $\mathbb{Q}_p/\mathbb{Z}_p(j+1)$ which we also call $f$. Clearly the class of $f$ in $H^1(H_{S_{i,n}},\,\mathbb{Q}_p/\mathbb{Z}_p(j+1))$ is fixed by $G_{S_{i,n}}$ and annihilated by $p^n$. Then we can choose a $1$-cocycle $f':G_{F_{i,n}}\to \mathbb{Q}_p/\mathbb{Z}_p(j+1)$ such that $f'|_{H_{i,n}}=f$ and there exists a $1$-cocycle $f'':G_{S_{i,n}}\to \mathbb{Q}_p/\mathbb{Z}_p(j+1)$ such that $p^nf'=f''\pi$ where $\pi:G_{F_{i,n}}\to G_{S_{i,n}}$ is the natural projection. If $s:G_{S_{i,n}}\to G_{F_{i,n}}$ is a (continuous) section of $\pi$, we have $p^nf's=f''$. Finally, then, $\alpha([f])\in H^2(G_{S_{i,n}},\,\mu_{p^n}^{\otimes j+1})$ is represented by the $2$-cocycle
$$(g_1,\,g_2)\mapsto \partial(f's)(g_1,\,g_2)=g_1f's(g_2)-f's(g_1g_2)+f's(g_1)=-f's(g_1g_2)+f'(s(g_1)s(g_2))$$
(the last equality comes from the fact that $f'$ is a cocycle). On the other hand, since any element of $G_{F_{i,n}}$ can be written uniquely as $s(g)h$ for some $g\in G_{S_{i,n}}$ and $h\in H_{S_{i,n}}$, we can define a map $\widetilde{f}:G_{F_{i,n}}\to\mu_{p^n}^{\otimes j+1}$ by $\widetilde{f}(s(g)h)=s(g)f(h)$. Then $tg[f]\in H^2(G_{S_{i,n}},\,\mu_{p^n}^{\otimes j+1})$ is represented by the $2$-cocycle (see \cite{NSW}, Proposition (1.6.5) and its proof)
\begin{eqnarray*}
(g_1,\,g_2)\mapsto \partial \widetilde{f}(s(g_1),\,s(g_2))&=& s(g_1)\widetilde{f}(s(g_2))-\widetilde{f}(s(g_1)s(g_2))+\widetilde{f}(s(g_1))\\
&=&-\widetilde{f}(s(g_1)s(g_2))=-s(g_1g_2)f'\left(\frac{s(g_1)s(g_2)}{s(g_1g_2)}\right)
\end{eqnarray*}
(the last equality comes from the definition of $\widetilde{f}$ and the fact that $f'|_{H_{i,n}}=f$). The result follows because $f'$ is a cocycle. 
\end{proof} 

\begin{remark}
I wonder if there exists a proof of the preceding lemma which does not make use of explicit calculation. The fact that $-T_{F_{i,n},j,n}$ (and not simply $T_{F_{i,n},j,n}$) is necessary to have a commutative diagram is somehow unexpected. 
\end{remark}

\begin{lemma}\label{maria}
For each $n\in \mathbb{N}$ and each $i\geq1$, the following diagram is commutative
$$
\begin{diagram}
\bigoplus_{w\nmid p\infty}\mu_{p^n}^{\otimes i} & &\rTo^{T_{F_{i,n},i,n}}& &H^2(G_{S_{i,n}},\,\mu_{p^n}^{\otimes i+1})\\
&\rdTo^{\pi_{i,n}} & &\ruTo^{J_{i,n}} \\
& &Cl^{S_{i,n}}_{F_{i,n}}\otimes \mu_{p^n}^{\otimes i}
\end{diagram}
$$
where, if $\mathfrak{P}_w$ is the ideal representing $w$ in $F_{i,n}$, we set 
$$\pi_{i,n}\left((\zeta_w)_w\right)=[\prod_{w\nmid p\infty}\mathfrak{P}_w\otimes \zeta_w]$$
\end{lemma}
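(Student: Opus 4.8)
The plan is to identify both composite maps $T_{F_{i,n},i,n}$ and $J_{i,n}\circ\pi_{i,n}$ with one and the same composition of two connecting homomorphisms. Throughout write $G=G_{S_{i,n}}$, $H=H_{S_{i,n}}$, $E=E_{i,n}$; let $\mathcal{O}_{E}^{\times}$ be the group of $S_{i,n}$-units of $E$ (which is $p^{n}$-divisible, \cite{NSW}, 8.3.3) and let $\mathcal{I}$ (resp. $\mathcal{I}(E)$) be the free abelian group on the finite places of $F_{i,n}$ (resp. of $E$) not dividing $p$, so that $\mathcal{I}(E)$ is a permutation $G$-module with $H^{1}(G,\mathcal{I}(E))=0$ and $\mathcal{I}(E)^{G}=\mathcal{I}$. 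Since $\mu_{p^{n}}^{\otimes i}$ is a \emph{trivial} $G_{F_{i,n}}$-module, hence $\cong\mathbb{Z}/p^{n}$ as such, and tensoring with it is exact, I untwist the coefficients by $\mu_{p^{n}}^{\otimes i}$ everywhere: this turns $\mu_{p^{n}}^{\otimes i+1}$ into $\mu_{p^{n}}$, the source $\bigoplus_{w\nmid p\infty}\mu_{p^{n}}^{\otimes i}$ into $\mathcal{I}/p^{n}$ (each $w\nmid p\infty$ contributing $H^{0}((k_{i,n})_{w},\mathbb{Z}/p^{n})=\mathbb{Z}/p^{n}$), the map $\pi_{i,n}$ into the canonical surjection $q\colon\mathcal{I}/p^{n}\twoheadrightarrow Cl^{S_{i,n}}_{F_{i,n}}/p^{n}$ (divisor $\mapsto$ its class), and $J_{i,n}$ into the map $J'_{i,n}=g\circ f$ of Remark \ref{kum}. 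So it remains to prove that the boundary map $T\colon\mathcal{I}/p^{n}\to H^{2}(G,\mu_{p^{n}})$ of the sequence (\ref{mupnls}) for $j=0$ equals $g\circ f\circ q$.

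Next I unwind $g\circ f\circ q$. By the very definition in Remark \ref{kum} (which rests on \cite{NSW}, 8.3.10), $f\circ q$ is, after the identifications $H^{0}(G,\mathcal{I}(E))/p^{n}=\mathcal{I}/p^{n}$ (using $H^{1}(G,\mathcal{I}(E))=0$) and $H^{1}(G,\mathcal{O}_{E}^{\times})\cong Cl^{S_{i,n}}_{F_{i,n}}$ (using $H^{1}(G,E^{\times})=0$, Hilbert 90), the reduction mod $p^{n}$ of the connecting homomorphism $\partial_{\mathrm{div}}\colon H^{0}(G,\mathcal{I}(E))\to H^{1}(G,\mathcal{O}_{E}^{\times})$ of the divisor exact sequence $0\to\mathcal{O}_{E}^{\times}\to E^{\times}\stackrel{\mathrm{div}}{\to}\mathcal{I}(E)\to 0$; indeed on a prime $\mathfrak{P}_{w}$ both maps are given by the cocycle $\sigma\mapsto\sigma(\alpha)/\alpha$, $\alpha\mathcal{O}_{E}=\mathfrak{P}_{w}\mathcal{O}_{E}$. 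Similarly $g$ is the connecting homomorphism $\delta_{\mathrm{Kum}}\colon H^{1}(G,\mathcal{O}_{E}^{\times})\to H^{2}(G,\mu_{p^{n}})$ of the Kummer exact sequence $0\to\mu_{p^{n}}\to\mathcal{O}_{E}^{\times}\stackrel{p^{n}}{\to}\mathcal{O}_{E}^{\times}\to 0$. Hence $g\circ f\circ q=\delta_{\mathrm{Kum}}\circ\partial_{\mathrm{div}}$, and the problem reduces to showing $T=\delta_{\mathrm{Kum}}\circ\partial_{\mathrm{div}}$.

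This last identification is the heart of the matter. As recalled in the proof of Lemma \ref{dic}, the sequence (\ref{mupnls}) arises from the Hochschild--Serre spectral sequence of $1\to H\to G_{F_{i,n}}\to G\to 1$ and $T$ is its transgression $H^{0}(G,H^{1}(H,\mu_{p^{n}}))\to H^{2}(G,\mu_{p^{n}})$. I would compute $T$ on the class of a prime $\mathfrak{P}_{w}$, $w\nmid p\infty$, exactly along the lines of Lemma \ref{dic}: choose $\alpha\in E^{\times}$ with $\alpha\mathcal{O}_{E}=\mathfrak{P}_{w}\mathcal{O}_{E}$ and a $p^{n}$-th root $\beta$ of $\alpha$ in $\overline{F_{i,n}}$; then the class of $\mathfrak{P}_{w}$ corresponds under Kummer theory to $[\alpha]\in E^{\times}/p^{n}=H^{1}(H,\mu_{p^{n}})$, represented by the cocycle $\tau\mapsto\tau(\beta)/\beta$, and for a continuous section $s\colon G\to G_{F_{i,n}}$ the transgression formula \cite{NSW}, (1.6.5), gives $T(\mathfrak{P}_{w})$ as the class of $(g_{1},g_{2})\mapsto s(g_{1}g_{2})(\beta)\,\bigl(s(g_{1})s(g_{2})(\beta)\bigr)^{-1}$, which lies in $\mu_{p^{n}}$ because $s(g_{1}g_{2})$ and $s(g_{1})s(g_{2})$ induce the same automorphism of $E\ni\alpha$. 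The key observation is that the \emph{same} section $s$ provides a lift of the cocycle $g\mapsto g(\alpha)/\alpha$ representing $\partial_{\mathrm{div}}(\mathfrak{P}_{w})$ along the $p^{n}$-power map, namely the cochain $g\mapsto s(g)(\beta)/\beta$: this takes values in $\mathcal{O}_{E}^{\times}$ — being a $p^{n}$-th root of an element of $\mathcal{O}_{E}^{\times}$, and $\mathcal{O}_{E}^{\times}$ being $p^{n}$-divisible, it differs from a $p^{n}$-th root inside $\mathcal{O}_{E}^{\times}$ only by a root of unity, hence has trivial valuation outside $p$ — and feeding it into the definition of $\delta_{\mathrm{Kum}}$ yields the cocycle $(g_{1},g_{2})\mapsto s(g_{1})s(g_{2})(\beta)\,\bigl(s(g_{1}g_{2})(\beta)\bigr)^{-1}$ for $\delta_{\mathrm{Kum}}(\partial_{\mathrm{div}}(\mathfrak{P}_{w}))$. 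These two $2$-cocycles agree on the nose up to inversion, so $T=\delta_{\mathrm{Kum}}\circ\partial_{\mathrm{div}}$ will follow once the sign conventions — those in the Bockstein isomorphisms of Lemma \ref{pn}, in Soul\'e's normalisation of the boundary in (\ref{mupnls}), and in the transgression formula of \cite{NSW} — are reconciled; I expect this sign-and-identification bookkeeping, precisely the subtlety already flagged after Lemma \ref{dic}, to be the only real obstacle. (A cocycle-free alternative is to compare, in the derived category of \'etale sheaves on $\mathrm{Spec}\,\mathcal{O}_{F_{i,n}}^{S_{i,n}}$, the quasi-isomorphism $R\iota_{\ast}\mu_{p^{n}}\simeq[\iota_{\ast}\mathbb{G}_{m}\stackrel{p^{n}}{\to}\iota_{\ast}\mathbb{G}_{m}]$, valid since $R^{t}\iota_{\ast}\mathbb{G}_{m}=0$ for $t\geq 1$ by Hilbert 90, with the Kummer and divisor exact sequences of sheaves: then $T$, being the differential $d_{2}^{0,1}$ of the canonical truncation triangle of $R\iota_{\ast}\mu_{p^{n}}$, visibly factors through $H^{1}(\mathrm{Spec}\,\mathcal{O}_{F_{i,n}}^{S_{i,n}},\mathbb{G}_{m})/p^{n}=Cl^{S_{i,n}}_{F_{i,n}}/p^{n}$ in the same way $g\circ f$ does.)
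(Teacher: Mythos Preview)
Your proposal is correct and follows essentially the same route as the paper's proof. Both arguments untwist by $\mu_{p^n}^{\otimes i}$ to reduce to coefficients $\mu_{p^n}$, identify the localization boundary $T$ with the Hochschild--Serre transgression, and then compare via an explicit cocycle computation invoking \cite{NSW}, (1.6.5): the paper picks an arbitrary $f\in H^{0}(G,H^{1}(H,\mu_{p^{n}}))$ and introduces the Kummer isomorphism $\Sigma$ to the divisor group, while you work directly with a generating prime $\mathfrak{P}_{w}$, but the resulting cocycle formula $s(g_{1})s(g_{2})(\beta)\,s(g_{1}g_{2})(\beta)^{-1}$ and its comparison with $J'_{i,n}\circ\pi'_{i,n}$ are literally the same in both. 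Your framing of $g\circ f\circ q$ as the composite $\delta_{\mathrm{Kum}}\circ\partial_{\mathrm{div}}$ of two connecting maps is a clean way to say what the paper spells out using Remark~\ref{kum}, and your honest flag about the residual sign is exactly the subtlety the paper acknowledges in the remark following Lemma~\ref{dic}; the paper simply asserts the match with $tg$ by citing the proof of \cite{NSW}, (1.6.5), without further comment.
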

\begin{proof}
The commutativity of the diagram in the statement is equivalent to the commutativity of the following diagram
$$
\begin{diagram}
\bigoplus_{w\nmid p\infty}\mathbb{Z}/p^n\mathbb{Z} & &\rTo^{T_{F_{i,n},0,n}}& &H^2(G_{S_{i,n}},\,\mu_{p^n})\\
&\rdTo^{\pi'_{i,n}} & &\ruTo^{J'_{i,n}} \\
& &Cl^{S_{i,n}}_{F_{i,n}}\otimes \mathbb{Z}/p^n\mathbb{Z}
\end{diagram}
$$
where the definition of $\pi'_{i,n}$ is similar to that of $\pi_{i,n}$ and $J'_{i,n}$ is the map of Remark \ref{kum}. 
Comparing the Leray spectral sequence and the Hochschild-Serre spectral sequence and using the computation of \cite{So}, III.1.3, we get a commutative diagram
$$
\begin{CD}
\displaystyle{\bigoplus_{w\nmid p\infty}\mathbb{Z}/p^n\mathbb{Z}}@>T_{F_{i,n},0,n}>> H^2(G_{S_{i,n}},\,\mu_{p^n})\\
@A\Sigma AA @|\\
H^0(G_{S_{i,n}},\,H^1(H_{S_{i,n}},\,\mu_{p^n}))@>tg>>H^2(G_{S_{i,n}},\,\mu_{p^n})
\end{CD}
$$
where $\Sigma$ is the isomorphism defined in the following way. For each $w\nmid p\infty$, let $(F_{i,n})_w$ be a completion of $F_{i,n}$ at $w$ and let $(F_{i,n})_w^{un}$ be the maximal unramified extension of $(F_{i,n})_w$ inside an algebraic closure $\overline{(F_{i,n})_w}$ of $(F_{i,n})_w$. Once an embedding of an algebraic closure of $F_{i,n}$ in $\overline{(F_{i,n})_w}$ has been chosen, we get a restriction map $\rho_w:\mathrm{Gal}(\overline{(F_{i,n})_w}/(F_{i,n})_w^{un})\to H_{S_{i,n}}$. Now, given $f\in H^1(H_{S_{i,n}},\,\mu_{p^n})^{G_{S_{i,n}}}=\mathrm{Hom}_{G_{S_{i,n}}}(H_{S_{i,n}},\,\mu_{p^n})$, we define 
$$f\mapsto(f\circ \rho_w)_w\in \bigoplus_{w\nmid p\infty} \mathrm{Hom}(\mathrm{Gal}(\overline{(F_{i,n})_w}/(F_{i,n})_w^{un}),\,\mu_{p^n})=\bigoplus_{v\nmid p\infty}H^1((F_{i,n})_w^{un},\,\mu_{p^n})$$
The definition of $\Sigma$ now results via Kummer theory, since 
$$H^1((F_{i,n})_w^{un},\,\mu_{p^n})\cong ((F_{i,n})_w^{un})^\times/((F_{i,n})_w^{un})^{\times p^n}=\langle\pi_w\rangle/\langle\pi_w^{p^n}\rangle=\mathbb{Z}/p^n\mathbb{Z}$$
where $\pi_w\in (F_{i,n})_w$ is any uniformizer. We are then left to prove that $J'_{i,n}\pi'_{i,n}\Sigma=tg$.\\
Let $f\in H^1(H_{S_{i,n}},\,\mu_{p^n})^{G_{S_{i,n}}}=\mathrm{Hom}_{G_{S_{i,n}}}(H_{S_{i,n}},\,\mu_{p^n})$ and let $\beta \in F_{S_{i,n}}^\times$ be an element representing $f$ via the Kummer isomorphism $F_{S_{i,n}}^\times/(F_{S_{i,n}}^\times)^{p^n}\cong H^1(H_{S_{i,n}},\,\mu_{p^n})$, i.e.  
$$f(h)=\frac{h(\beta^{\frac{1}{p^n}})}{\beta^{\frac{1}{p^n}}} \quad\textrm{for any $h\in H_{S_{i,n}}$}$$ 
It is easy to see that $\pi'_{i,n}\Sigma(f)=[\prod_{w\nmid p\infty} \mathfrak{P}_w^{\tilde{w}(\beta)}]$
where $\tilde{w}$ is the extension of the valuation $w$ to $F_{S_{i,n}}$ defined by the chosen embedding $F_{S_{i,n}}\hookrightarrow \overline{(F_{i,n})_w}$. Thanks to the description of $J'_{i,n}$ given in Remark \ref{kum}, we see that $J'_{i,n}\pi'_{i,n}\Sigma(f)\in H^2(G_{S_{i,n}},\,\mu_{p^n})$ is the class represented by the inhomogenous $2$-cocycle
$$(\sigma,\,\tau) \mapsto \sigma\left(\frac{\tau(\beta)}{\beta}\right)^{\frac{1}{p^n}}\cdot \left(\frac{\sigma\tau(\beta)}{\beta}\right)^{-\frac{1}{p^n}}\cdot\left(\frac{\sigma(\beta)}{\beta}\right)^{\frac{1}{p^n}}$$ 
If $s:G_{S_{i,n}}\to G_{F_{i,n}}$ is a (continuous) section of the natural projection $G_{F_{i,n}}\to G_{S_{i,n}}$, this can be rewritten as
$$(\sigma,\,\tau) \mapsto s(\sigma)\left(\frac{s(\tau)(\beta^{\frac{1}{p^n}})}{\beta^{\frac{1}{p^n}}}\right)\cdot \left(\frac{s(\sigma\tau)(\beta^{-\frac{1}{p^n}})}{\beta^{-\frac{1}{p^n}}}\right)\cdot\left(\frac{s(\sigma)(\beta^{\frac{1}{p^n}})}{\beta^{\frac{1}{p^n}}}\right)=\frac{s(\sigma)s(\tau)(\beta^{\frac{1}{p^n}})}{s(\sigma\tau)(\beta^{\frac{1}{p^n}})}$$
But this is exactly a representative for $tg(f)$ (see the proof of Proposition 1.6.5 in \cite{NSW}). 
\end{proof}

The following lemma is stated in full generality but will be used in the proof of Theorem \ref{gh} only for the case when $p=2$ and $F$ is nonexceptional. 

\begin{lemma}\label{rabbercio}
Suppose that $p$ is odd or $F$ is nonexceptional. Then for any $i\geq 1$ and any $n\in\mathbb{N}$, $(Cl^{S_{i,n}}_{F_{i,n}})_p=0$ if and only if the map 
$$(\partial_{F_{i,n},i}^c)_{|_{H^2(F_{i,n},\,\mathbb{Z}_p(i+1))[p^n]}}:H^2(F_{i,n},\,\mathbb{Z}_p(i+1))[p^n]\longrightarrow \displaystyle{\bigoplus_{w\nmid p\infty}H^1((k_{i,n})_w,\,\mathbb{Z}_p(i))[p^n]}$$
is surjective.
\end{lemma}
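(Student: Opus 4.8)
The statement is about the field $F_{i,n}$ (with its own set $S_{i,n}$), so throughout the proof I work over $F_{i,n}$ in place of $F$; abbreviate $G = G_{S_{i,n}}$ and $H^j(-) = H^j(F_{i,n},-)$, $H^j_S(-) = H^j(G,-)$. The localization sequence (\ref{ces}) for $F_{i,n}$ reads
\begin{equation*}
0\to H^2_S(\mathbb{Z}_p(i+1))\to H^2(\mathbb{Z}_p(i+1))_p \stackrel{\partial^c}{\longrightarrow} \bigoplus_{w\nmid p\infty} H^1((k_{i,n})_w,\mathbb{Z}_p(i))\to 0 ,
\end{equation*}
where $\partial^c = \partial^c_{F_{i,n},i}$. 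The idea is to run the snake lemma on the multiplication-by-$p^n$ ladder whose rows are this sequence, so that the restriction of $\partial^c$ to the $p^n$-torsion fits into
\begin{equation*}
0 \to H^2_S(\mathbb{Z}_p(i+1))[p^n] \to H^2(\mathbb{Z}_p(i+1))[p^n] \stackrel{\partial^c}{\longrightarrow} \bigoplus_w H^1((k_{i,n})_w,\mathbb{Z}_p(i))[p^n] \stackrel{I^c}{\longrightarrow} H^2_S(\mathbb{Z}_p(i+1))/p^n \to \cdots ,
\end{equation*}
the connecting map $I^c$ being exactly the map $I^c_{F_{i,n},i,n}$ of Lemma \ref{dic} (using that $\mathrm{Div}(H^2(F_{i,n},\mathbb{Z}_p(i+1))) = 0$ and the right-hand term has no divisible elements, so the $p^n$-divisible parts cause no trouble). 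Thus the surjectivity of $\partial^c$ on $p^n$-torsion is equivalent to the \emph{injectivity} of $I^c_{F_{i,n},i,n}$.

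\textbf{Reducing to the Tate--Keune--Weibel sequence.} Now I translate this into the $\mu_{p^n}^{\otimes\ast}$-coefficient world via Lemma \ref{pn} and Lemma \ref{dic}. Lemma \ref{pn} identifies the source $\bigoplus_w H^1((k_{i,n})_w,\mathbb{Z}_p(i))[p^n]$ with $\bigoplus_w H^0((k_{i,n})_w,\mu_{p^n}^{\otimes i})$ and the target $H^2_S(\mathbb{Z}_p(i+1))/p^n$ with $H^2(G,\mu_{p^n}^{\otimes i+1})$, and Lemma \ref{dic} says that under these identifications $I^c_{F_{i,n},i,n}$ becomes $-T_{F_{i,n},i,n}$, the map in the long exact localization sequence (\ref{mupnls}). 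So $\partial^c$ is surjective on $p^n$-torsion if and only if $T_{F_{i,n},i,n}$ is injective, i.e. (looking at (\ref{mupnls})) if and only if the localization map $H^1(G,\mu_{p^n}^{\otimes i+1}) \to H^1(F_{i,n},\mu_{p^n}^{\otimes i+1})$ is surjective — but what I actually want to use is the description of the \emph{image} of $T_{F_{i,n},i,n}$ coming from Lemma \ref{maria}: the composite
\begin{equation*}
\bigoplus_{w\nmid p\infty}\mu_{p^n}^{\otimes i} \xrightarrow{\ \pi_{i,n}\ } Cl^{S_{i,n}}_{F_{i,n}}\otimes\mu_{p^n}^{\otimes i} \xrightarrow{\ J_{i,n}\ } H^2(G,\mu_{p^n}^{\otimes i+1})
\end{equation*}
equals $T_{F_{i,n},i,n}$. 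Since $J_{i,n}$ is injective by Proposition \ref{ke}, the kernel of $T_{F_{i,n},i,n}$ is exactly the kernel of $\pi_{i,n}$, so $T_{F_{i,n},i,n}$ is injective if and only if $\pi_{i,n}$ is injective.

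\textbf{Finishing: injectivity of $\pi_{i,n}$ versus $(Cl^{S_{i,n}}_{F_{i,n}})_p = 0$.} It remains to prove: $\pi_{i,n}\colon \bigoplus_{w\nmid p\infty}\mu_{p^n}^{\otimes i} \to Cl^{S_{i,n}}_{F_{i,n}}\otimes\mu_{p^n}^{\otimes i}$, $(\zeta_w)_w \mapsto [\prod_w \mathfrak P_w\otimes\zeta_w]$, is injective iff $(Cl^{S_{i,n}}_{F_{i,n}})_p = 0$. One direction is immediate: if the $p$-class group vanishes then the target is $0$, hence $\pi_{i,n}$ is (trivially) the zero map — but wait, that makes $\pi_{i,n}$ injective only if its source is $0$, which it is not. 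So in fact I should be more careful: \emph{$\pi_{i,n}$ is never injective unless its source is zero}, so the correct reading is that the lemma's surjectivity statement forces $\bigoplus_{w\nmid p\infty}\mu_{p^n}^{\otimes i}$ to inject, which it cannot unless $n = 0$ or there are no finite places $w\nmid p\infty$ — this cannot be the intended content. The resolution is that in the snake sequence the relevant map is not $I^c$ alone but $I^c$ on the quotient of $\bigoplus_w H^1[p^n]$ by the image of $\partial^c$, i.e. $\partial^c$ is surjective on $p^n$-torsion iff $I^c$ vanishes on $\mathrm{coker}(\partial^c|_{[p^n]})$, equivalently iff $\mathrm{im}(I^c) = \mathrm{im}(I^c|_{\mathrm{im}\,\partial^c})$. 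Tracing this through: $\partial^c$ is surjective on $p^n$-torsion if and only if $T_{F_{i,n},i,n}$, restricted to the image of the localization map into $\bigoplus_w H^0$, has the same image as $T_{F_{i,n},i,n}$ itself; and via Lemma \ref{maria} this becomes the statement that $J_{i,n}$ applied to $\pi_{i,n}(\text{everything})$ lies in $J_{i,n}$ applied to $\pi_{i,n}(\text{those tuples coming from global classes})$, i.e., $\mathrm{im}(\pi_{i,n}) = \pi_{i,n}(\ker\text{ of the valuation map from }H^1(F_{i,n},\mu_{p^n}^{\otimes i+1}))$. By the exactness of (\ref{mupnls}), the tuples coming from $H^1(F_{i,n},\mu_{p^n}^{\otimes i+1})$ are exactly $\ker T_{F_{i,n},i,n} = \ker\pi_{i,n}$; hence the condition is $\mathrm{im}(\pi_{i,n}) = \pi_{i,n}(\ker\pi_{i,n}) = 0$, i.e. $\pi_{i,n} = 0$. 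Finally $\pi_{i,n} = 0$ means every ideal of the form $\prod_w \mathfrak P_w^{a_w}$ (over $w\nmid p\infty$, exponents mod $p^n$) is trivial in $Cl^{S_{i,n}}_{F_{i,n}}\otimes\mu_{p^n}^{\otimes i}$; since the classes of primes $w\nmid p\infty$ generate $Cl^{S_{i,n}}_{F_{i,n}}$ and tensoring with the cyclic group $\mu_{p^n}^{\otimes i}$ detects exactly the $p^n$-part, this is equivalent to $Cl^{S_{i,n}}_{F_{i,n}}/p^n = 0$, i.e. (as $Cl^{S_{i,n}}_{F_{i,n}}$ is finite, pick $n$ — but $n$ is given) to $(Cl^{S_{i,n}}_{F_{i,n}})_p/p^n = 0$; one still needs that this holds for the specific $n$ iff $(Cl^{S_{i,n}}_{F_{i,n}})_p = 0$ — which is fine since if the $p$-class group is nonzero its $p^n$-quotient is nonzero for every $n\geq 1$, and conversely.

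\textbf{Main obstacle.} The delicate point, and the one I expect to take real care, is the bookkeeping in the snake-lemma step: correctly identifying that ``$\partial^c$ surjective on $p^n$-torsion'' unwinds not to ``$I^c$ injective'' but to ``$\mathrm{im}(I^c)$ is already hit by $\mathrm{im}(\partial^c|_{[p^n]})$'', and then chasing this through Lemma \ref{dic}, the exact sequence (\ref{mupnls}), Lemma \ref{maria} and the injectivity of $J_{i,n}$ to land on $\pi_{i,n} = 0$; the translation from $\pi_{i,n} = 0$ to $(Cl^{S_{i,n}}_{F_{i,n}})_p = 0$ is then a routine statement about a finite abelian group and a faithfully-flat-on-$p$-torsion tensor factor, using that primes outside $S_{i,n}$ generate the $S_{i,n}$-class group.
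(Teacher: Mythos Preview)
Your approach is exactly the paper's: apply the snake lemma to multiplication by $p^n$ on the localization sequence, translate the connecting map $I^c_{F_{i,n},i,n}$ into $-T_{F_{i,n},i,n}$ via Lemma~\ref{dic}, factor $T_{F_{i,n},i,n}=J_{i,n}\circ\pi_{i,n}$ via Lemma~\ref{maria}, and use that $J_{i,n}$ is injective (Proposition~\ref{ke}) and $\pi_{i,n}$ is surjective.

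The one real slip is your reading of the snake lemma. From the exact sequence
\[
\cdots \to H^2(\mathbb{Z}_p(i+1))[p^n]\xrightarrow{\partial^c}\bigoplus_w H^1((k_{i,n})_w,\mathbb{Z}_p(i))[p^n]\xrightarrow{I^c} H^2_S(\mathbb{Z}_p(i+1))/p^n\to\cdots
\]
surjectivity of $\partial^c$ on $p^n$-torsion is equivalent to $I^c=0$ (the \emph{zero map}), not to $I^c$ being injective. You state the wrong version first, realise it leads to an absurdity, and then recover the correct condition through a rather tortuous detour (``$\mathrm{im}(I^c)=\mathrm{im}(I^c|_{\mathrm{im}\,\partial^c})$'', etc.). All of that collapses once you use the right equivalence: $\partial^c|_{[p^n]}$ is surjective $\Leftrightarrow$ $I^c=0$ $\Leftrightarrow$ $T_{F_{i,n},i,n}=0$ $\Leftrightarrow$ $J_{i,n}\circ\pi_{i,n}=0$ $\Leftrightarrow$ $\pi_{i,n}=0$ (since $J_{i,n}$ is injective) $\Leftrightarrow$ $Cl^{S_{i,n}}_{F_{i,n}}\otimes\mu_{p^n}^{\otimes i}=0$ (since $\pi_{i,n}$ is surjective) $\Leftrightarrow$ $(Cl^{S_{i,n}}_{F_{i,n}})_p=0$. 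This is precisely the paper's two-line proof; your eventual conclusion is correct, but the write-up should simply delete the false start and the circuitous recovery.
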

\begin{proof}
Thanks to Lemma \ref{dic}, $$\mathrm{coker}\left((\partial_{F_{i,n},i}^{c})_{|_{H^2(F_{i,n},\,\mathbb{Z}_p(i+1))[p^n]}}\right)=0\Longleftrightarrow T_{F_{i,n},i,n}=0$$ 
where $T_{F_{i,n},i,n}$ is the map of Lemma \ref{dic}. Now the statement follows from Lemma \ref{maria} (note that the map $\pi_{i,n}$ in the statement of Lemma \ref{maria} is surjective). 
\end{proof}

\begin{remark}\label{trieste} 
An immediate consequence of the preceding lemma is the following: if $\mu_{p^n}\subseteq F$ for some \emph{positive} $n\in\mathbb{N}$ and $(Cl^{S}_F)_p\ne 0$, then the localization sequence for $K_{2i}(F)_p$ (or equivalently for $H^{2}(F,\,\mathbb{Z}_p(i+1))$) does not split for any $i\geq 1$.
\end{remark}

We are now ready to state and prove the main result of the paper. As the reader may guess, the case $p=2$ will require a different approach (and the results obtained are slightly weaker).  

\begin{teo}\label{gh}
Let $i$ be a positive integer and let $F$ be a number field.
\begin{itemize}
	\item If $p$ is odd, then for any $n\in \mathbb{N}$ there is an isomorphism 
	$$\Omega^{(p)}_{i,n}\cong \left(Cl^{S_{i,n}}_{F_{i,n}}\otimes \mu_{p^n}^{\otimes i}\right)_{\Gamma_{i,n}}$$ 
	In particular, the localization sequence for $K_{2i}(F)_p$ (or equivalently for $H^{2}(F,\,\mathbb{Z}_p(i+1))$) splits if and only if for every $n\in \mathbb{N}$ we have $\left(Cl^{S_{i,n}}_{F_{i,n}}\otimes \mu_{p^n}^{\otimes i}\right)_{\Gamma_{i,n}}=0$.
  \item If $p=2$ and $F$ is nonexceptional, then for $n\in \mathbb{N}$ sufficiently large there is an isomorphism 
  $$\Omega^{(2)}_{i,n}\cong \left(Cl^{S_{i,n}}_{F_{i,n}}\otimes \mu_{2^n}^{\otimes i}\right)_{\Gamma_{i,n}}$$ 
  Moreover the localization sequence for $K_{2i}(F)_2$ (or equivalently for $H^{2}(F,\,\mathbb{Z}_2(i+1))$) splits if and only if for every $n\in \mathbb{N}$ we have $\left(Cl^{S_{i,n}}_{F_{i,n}}\otimes \mu_{2^n}^{\otimes i}\right)_{\Gamma_{i,n}}=0$.  
\end{itemize}
\end{teo}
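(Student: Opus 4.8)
The plan is to use Lemma \ref{snake} to reduce the splitting question to computing each obstruction group $\Omega^{(p)}_{i,n}$, and then to identify $\Omega^{(p)}_{i,n}$ by a codescent argument from the field $F_{i,n}$, over which $\mu_{p^n}^{\otimes i}$ is a trivial Galois module and the Tate--Keune--Weibel sequence of Proposition \ref{ke} is available. To unwind $\Omega^{(p)}_{i,n}$, multiply (\ref{ces}) by $p^n$ and apply the snake lemma: the connecting homomorphism $\delta\colon\bigoplus_{v\nmid p\infty}H^1(k_v,\mathbb{Z}_p(i))[p^n]\to H^2(G_{F,S},\mathbb{Z}_p(i+1))/p^n$ has image exactly $\bigl(H^2(G_{F,S},\mathbb{Z}_p(i+1))\cap p^nH^2(F,\mathbb{Z}_p(i+1))_p\bigr)/p^nH^2(G_{F,S},\mathbb{Z}_p(i+1))=\Omega^{(p)}_{i,n}$, equivalently $\Omega^{(p)}_{i,n}=\mathrm{coker}\bigl(\partial_{F,i}^{c}|_{H^2(F,\mathbb{Z}_p(i+1))[p^n]}\bigr)$. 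Running the computation in the proof of Lemma \ref{dic} with $F$ itself in place of $F_{i,n}$ (that proof uses only the standing hypothesis, which holds for $F$) together with the isomorphisms of Lemma \ref{pn}, one identifies $\delta$ with $-T_{F,i,n}$, the boundary map of the $\mu_{p^n}^{\otimes i+1}$-localization sequence (\ref{mupnls}) for $F$; in particular $\Omega^{(p)}_{i,n}\cong\mathrm{im}\,T_{F,i,n}\subseteq H^2(G_{F,S},\mu_{p^n}^{\otimes i+1})$.

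Next comes the codescent. Corestriction along $F_{i,n}/F$ yields a commutative square linking $T_{F_{i,n},i,n}$ and $T_{F,i,n}$ whose left-hand vertical arrow $\bigoplus_{w\nmid p\infty}H^0((k_{i,n})_w,\mu_{p^n}^{\otimes i})\to\bigoplus_{v\nmid p\infty}H^0(k_v,\mu_{p^n}^{\otimes i})$ is surjective by Lemma \ref{surj} (this is the first place where, for $p=2$, one needs $n$ large). It follows that $\mathrm{im}\,T_{F,i,n}$ is the image of $\mathrm{im}\,T_{F_{i,n},i,n}$ under the corestriction $H^2(G_{S_{i,n}},\mu_{p^n}^{\otimes i+1})\to H^2(G_{F,S},\mu_{p^n}^{\otimes i+1})$. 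By Lemma \ref{maria}, $T_{F_{i,n},i,n}=J_{i,n}\circ\pi_{i,n}$ with $\pi_{i,n}$ surjective, so $\mathrm{im}\,T_{F_{i,n},i,n}=\mathrm{im}\,J_{i,n}\cong Cl^{S_{i,n}}_{F_{i,n}}\otimes\mu_{p^n}^{\otimes i}$, since $J_{i,n}$ is injective (Proposition \ref{ke}). Since corestriction kills the augmentation action it factors through $\Gamma_{i,n}$-coinvariants, and by the coinvariant form of Proposition \ref{ke} (which uses Lemma \ref{we1}, Lemma \ref{we2} and $H_1(\Gamma_{i,n},\mu_{p^n}^{\otimes i})=0$, again requiring $n$ large when $p=2$), together with the identification $H^2(G_{S_{i,n}},\mu_{p^n}^{\otimes i+1})_{\Gamma_{i,n}}\xrightarrow{\sim}H^2(G_S,\mu_{p^n}^{\otimes i+1})$ by corestriction, the resulting map $(Cl^{S_{i,n}}_{F_{i,n}}\otimes\mu_{p^n}^{\otimes i})_{\Gamma_{i,n}}\to H^2(G_{F,S},\mu_{p^n}^{\otimes i+1})$ is injective. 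Chasing these identifications yields $\Omega^{(p)}_{i,n}\cong\bigl(Cl^{S_{i,n}}_{F_{i,n}}\otimes\mu_{p^n}^{\otimes i}\bigr)_{\Gamma_{i,n}}$, for every $n$ when $p$ is odd and for all sufficiently large $n$ when $p=2$ and $F$ is nonexceptional.

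For $p$ odd the splitting criterion is now immediate from Lemma \ref{snake}. For $p=2$ the isomorphism only controls $\Omega^{(2)}_{i,n}$ for large $n$, so to obtain the criterion ``for every $n$'' one must still show that splitting --- equivalently purity of $H^2(G_{F,S},\mathbb{Z}_2(i+1))$ in $H^2(F,\mathbb{Z}_2(i+1))_2$ --- both forces and is forced by the vanishing of $(Cl^{S_{i,n}}_{F_{i,n}}\otimes\mu_{2^n}^{\otimes i})_{\Gamma_{i,n}}$ at the finitely many remaining small $n$, via a direct analysis of the one-sided containments that survive the argument above applied to the (finitely many) exceptional places at each such level. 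I expect the main technical burden to be precisely this: checking that the corestriction squares commute, verifying that $\mathrm{im}\,T_{F,i,n}$ is \emph{exactly} the image of $(Cl^{S_{i,n}}_{F_{i,n}}\otimes\mu_{p^n}^{\otimes i})_{\Gamma_{i,n}}$ (not larger, by Lemma \ref{surj}; not smaller, by the injectivity in the coinvariant Proposition \ref{ke}), and carrying out the extra small-$n$ bookkeeping in the case $p=2$.
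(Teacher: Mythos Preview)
Your argument for the isomorphism $\Omega^{(p)}_{i,n}\cong\bigl(Cl^{S_{i,n}}_{F_{i,n}}\otimes\mu_{p^n}^{\otimes i}\bigr)_{\Gamma_{i,n}}$ (for all $n$ when $p$ is odd, and for large $n$ when $p=2$) is correct and is essentially the paper's own proof: identify $\Omega^{(p)}_{i,n}$ with $\mathrm{im}\,T_{F,i,n}$ via Lemma \ref{dic}, factor $T_{F_{i,n},i,n}$ through the class-group term via Lemma \ref{maria}, and then push down by corestriction using Lemma \ref{surj} for surjectivity and the coinvariant form of Proposition \ref{ke} for injectivity. For $p$ odd this, together with Lemma \ref{snake}, finishes the theorem exactly as in the paper.

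The genuine gap is the $p=2$ splitting criterion at small $n$. Your suggestion of ``a direct analysis of the one-sided containments \ldots applied to the (finitely many) exceptional places at each such level'' is not what is needed and would not by itself close the argument. The paper handles the two directions by quite different, and not entirely local, methods. If the sequence splits, one first observes (via Nakayama, since $\Gamma_{i,n}$ is a $2$-group) that vanishing of the coinvariants is equivalent to $(Cl^{S_{i,n}}_{F_{i,n}})_2=0$; the isomorphism at large $n$ and Lemma \ref{rabbercio} at $n=1$ (using $\{\pm1\}\subseteq F$) give this at the two ends, and the intermediate levels are filled in by a global argument exploiting that $F_{i,\infty}/F$ is a $\mathbb{Z}_2$-extension: since $(Cl^S_F)_2=0$ there is a prime above $2$ that is inert, hence nonsplit at every finite layer, which forces the norm maps $(Cl^{S_{i,n}}_{F_{i,n}})_2\to(Cl^{S_{i,m}}_{F_{i,m}})_2$ to be surjective. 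Conversely, assuming all $(Cl^{S_{i,n}}_{F_{i,n}})_2=0$, the paper proves $\Omega^{(2)}_{i,n}=0$ by induction on $n$: decompose an arbitrary element of $\bigoplus_{v}H^1(k_v,\mathbb{Z}_2(i))[2^n]$ according to the exact $2$-order of its components; the pieces of order $<2^n$ are hit by induction, while a component of exact order $2^n$ forces $v$ to split completely in $F_{i,n}/F$, so it lifts along $cor^{(1)}_n$ and is then hit using Lemma \ref{rabbercio} upstairs. Neither of these steps is a ``local bookkeeping'' matter, and you should supply them explicitly.
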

\begin{proof} 
For any $n\in\mathbb{N}$, there is a commutative diagram of $\Gamma_{i,n}$-modules (those on the bottom line have trivial action) with exact rows
$$
\begin{CD}
0@>>> H^2(G_{S_{i,n}},\,\mathbb{Z}_p(i+1)) @>>> H^{2}(F_{i,n},\,\mathbb{Z}_p(i+1))_p @>\widetilde{\partial}_i^c>> \displaystyle{\bigoplus_{w\nmid p}\,H^{1}((k_{i,n})_w,\,\mathbb{Z}_p(i))}@>>>0\\
@.@Vcor^{(2)}_nVV@Vcor^{(2)}_nVV@Vcor^{(1)}_nVV\\
0@>>> H^2(G_S,\,\mathbb{Z}_p(i+1)) @>>> H^{2}(F,\,\mathbb{Z}_p(i+1))_p @>\partial_i^c>> \displaystyle{\bigoplus_{v\nmid p}\,H^{1}(k_v,\,\mathbb{Z}_p(i))}@>>>0\\
\end{CD}
$$
where $cor_n$ is the appropriate (cohomological) corestriction. Following \cite{Hu}, Section 3, we consider a part of the commutative diagram induced by snake lemma, namely
$$
\begin{CD}
\displaystyle{\bigoplus_{w\nmid p}H^1((k_{i,n})_w,\,\mathbb{Z}_p(i))[p^n]}@>I^c_{F_{i,n},i,n}>> H^2(G_{S_{i,n}},\,\mathbb{Z}_p(i+1))/p^n\\
 @VVcor^{(1)}_n V   @VVcor^{(2)}_n V \\
\displaystyle{\bigoplus_{v\nmid p}H^1(k_v,\,\mathbb{Z}_p(i))[p^n]}@>I_{F,i,n}^c>> H^2(G_S,\,\mathbb{Z}_p(i+1))/p^n \\
\end{CD}
$$  
Using Lemma \ref{pn}, Lemma \ref{cf} and Lemma \ref{dic} we can rewrite it as follows
$$
\begin{CD}
\displaystyle{\bigoplus_{w\nmid p}\mu_{p^{n}}^{\otimes^{i}}}@>T_{F_{i,n},i,n}>> H^2(G_{S_n},\,\mu_{p^{n}}^{\otimes^{i+1}})\\
 @VVcor^{(0)}_n V   @VVcor^{(2)}_n V \\
\displaystyle{\bigoplus_{v\nmid p}H^0(k_v,\,\mu_{p^{n}}^{\otimes^{i}})}@>T_{F,i,n}>> H^2(G_S,\,\mu_{p^{n}}^{\otimes^{i+1}}) \\
\end{CD}
$$
Using Lemma \ref{maria}, we can complete the above diagram to get the following commutative diagram
$$
\begin{diagram}
\displaystyle{\bigoplus_{w\nmid p}\mu_{p^{n}}^{\otimes^{i}}}& &\rTo^{T_{F_{i,n},i,n}}& &H^2(G_{S_{i,n}},\,\mu_{p^{n}}^{\otimes^{i+1}})\\
\dTo&\rdTo^{\pi_{i,n}} & &\ruTo^{J_{i,n}}&\dTo \\
& &Cl^{S_{i,n}}_{F_{i,n}}\otimes \mu_{p^{n}}^{\otimes^{i}}\\
\displaystyle{\bigoplus_{v\nmid p}H^0(k_v,\,\mu_{p^{n}}^{\otimes^{i}})}& &\rTo^{T_{F,i,n}}& & H^2(G_S,\,\mu_{p^{n}}^{\otimes^{i+1}}) \\
\end{diagram}
$$
We remark explicitly that $\pi_{i,n}$ is surjective (since elements of the form $[\mathfrak{P}_w]\otimes\zeta_w$, with $\zeta_w$ running in $\mu_{p^n}^{\otimes i}$ generate $Cl^{S_{i,n}}_{F_{i,n}} \otimes \mu_{p^n}^{\otimes i}$). By definition of $\Omega^{(p)}_{i,n}$ and Lemma \ref{dic}, we have 
$$\mathrm{Im} \,T_{F,i,n}\cong \mathrm{Im} \,I^c_{F,i,n}=\Omega^{(p)}_{i,n}$$
Hence, $\eta_{i,n}=cor^{(2)}_n\circ J_{i,n}$ defines a $\Gamma_{i,n}$-homomorphism 
$$\eta_{i,n}:Cl^{S_{i,n}}_{F_{i,n}}\otimes \mu_{p^n}^{\otimes i}\to \Omega^{(p)}_{i,n}$$
which induces a map
$$\eta_{_{\Gamma_{i,n}}}:(Cl^{S_{i,n}}_{F_{i,n}}\otimes \mu_{p^n}^{\otimes i})_{\Gamma_{i,n}}\to \Omega^{(p)}_{i,n}$$
after taking coinvariants.\\
If $p$ is odd or $n$ is large enough, $\eta_{_{\Gamma_{i,n}}}$ is both injective (this follows from Proposition \ref{ke}) and surjective (in fact, $\eta_{i,n}$ is surjective because $\pi_{i,n}$ is surjective and $cor^{(1)}_n$ is surjective by Lemma \ref{surj}). In particular if $p$ is odd or $n$ is sufficiently large (and $F$ is nonexceptional)
$$\left(Cl^{S_n}_{F_n}\otimes \mu_{p^n}^{\otimes i}\right)_{\Gamma_n}\cong\Omega^{(p)}_{i,n}$$  
and the first statement and part of the second statement of the theorem are proved thanks to Lemma \ref{snake}.\\
We now focus on the case where $p=2$ (and of course $F$ is nonexceptional). Suppose first that the localization sequence for $H^{2}(F,\,\mathbb{Z}_2(i+1))$ splits, in other words $\Omega^{(2)}_{i,n}=0$ for any $n\in \mathbb{N}$. Observe that, since $\Gamma_{i,n}$ is a $2$-group, Nakayama's lemma implies 
\begin{equation}
(Cl^{S_{i,n}}_{F_{i,n}}\otimes \mu_{2^n}^{\otimes i})_{\Gamma_{i,n}}=0\Longleftrightarrow (Cl^{S_{i,n}}_{F_{i,n}})_2=0
\end{equation}
Moreover, if we set $F_{i,\infty}=\cup_{n\in\mathbb{N}}F_{i,n}$, then $F_{i,\infty}/F$ is a $\mathbb{Z}_2$-extension, since $F$ is nonexceptional. Now, by Lemma \ref{rabbercio}, we know that $(Cl_F^{S})_2=0$ (since $\{\pm1\}\subseteq F$). Moreover we also know that $(Cl^{S_{i,n}}_{F_{i,n}})_2=0$ for $n$ large enough, the map $\eta_{_{\Gamma_{i,n}}}$ being an isomorphism for such $n$. We have to show that $(Cl^{S_{i,n}}_{F_{i,n}})_2=0$ for any $n\in \mathbb{N}$. Let $m\in \mathbb{N}$ be such that $F_{i,m}\ne F$: we can find a prime $v$ above $2$ in $F$ which stays inert in $F_{i,m}/F$ because $(Cl^{S}_{F})_2=0$ and only primes above $2$ can ramify in $\mathbb{Z}_2$-extensions. In particular there is only one prime above $v$ in $F_{i,\infty}/F$ (since $F_{i,\infty}/F$ is a $\mathbb{Z}_2$-extension). Now take $n\geq m$ such that $(Cl^{S_{i,n}}_{F_{i,n}})_2=0$: $F_{i,n}/F_{i,m}$ has to be disjointed from the $2$-split Hilbert class field of $F_m$ since there is a nonsplit prime above $2$ in $F_{i,n}/F_{i,m}$. This means that the norm map 
$$(Cl^{S_{i,n}}_{F_{i,n}})_2\longrightarrow (Cl^{S_{i,m}}_{F_{i,m}})_2$$ 
has to be surjective, which implies $(Cl^{S_{i,m}}_{F_{i,m}})_2=0$.\\
Now suppose that $(Cl^{S_{i,n}}_{F_{i,n}})_2=0$ for every $n\in\mathbb{N}$: we prove by induction on $n$ that 
$\Omega^{(2)}_{i,n}=0$. Of course we trivially have $\Omega^{(2)}_{i,0}=0$ (and indeed also $\Omega^{(2)}_{i,1}=0$ by Lemma \ref{rabbercio}). Next consider the following commutative diagram
$$
\begin{CD}
H^2(F_{i,n},\,\mathbb{Z}_2(i+1))[2^n]@>\partial_{F_{i,n},i}^c>> \displaystyle{\bigoplus_{w\nmid p}H^1((k_{i,n})_w,\,\mathbb{Z}_2(i))[2^n]}\\
 @VVcor^{(2)}_n V   @VVcor^{(1)}_n V \\
H^2(F,\,\mathbb{Z}_2(i+1))[2^n]@>\partial_{F,i}^c>> \displaystyle{\bigoplus_{v\nmid p}H^1(k_v,\,\mathbb{Z}_2(i))[2^n]}\\
\end{CD}
$$   
A generic element $x=(x_v)\in\bigoplus_{v\nmid 2\infty}H^1(k_v,\,\mathbb{Z}_2(i))[2^n]$ can be written as a sum $x=y_1+\ldots+y_n$ where, for any $j=1,\,\ldots,\,n$, $y_j\in \bigoplus_{v\nmid 2\infty}H^1(k_v,\,\mathbb{Z}_2(i))[2^n]$ and
$$(y_j)_v=\left\{\begin{array}{ll}
x_v &\textrm{if $x_v\in H^1(k_v,\,\mathbb{Z}_2(i))[2^j]\smallsetminus H^1(k_v,\,\mathbb{Z}_2(i))[2^{j-1}]$}\\
0&\textrm{otherwise}
\end{array}\right.$$
By induction, for any $j=1,\,\ldots,\,n-1$, there is an element 
$$z_j\in H^2(F,\,\mathbb{Z}_2(i+1))[2^j]\subseteq H^2(F,\,\mathbb{Z}_2(i+1))[2^n]$$ 
such that $\partial_{F,i}^c(z_j)=y_j$. Now $(y_n)_v$ is an element of order $2^n$ in $H^1(k_v,\,\mathbb{Z}_2(i))[2^n]\cong H^0(k_v,\,\mu_{2^n}^{\otimes i})$ (by Lemma \ref{pn}). This means that $H^0(k_v,\,\mu_{2^n}^{\otimes i})=\mu_{2^n}^{\otimes i}$ which implies $(k_{i,n})_w=k_v(\mu_{2^n}^{\otimes i})=k_v$ for any $w|v$ in $F_{i,n}$ (by Lemma \ref{cf}), i.e. $v$ splits completely in $F_{i,n}/F$. In particular there is an element $w_n\in \bigoplus_{w\nmid 2\infty}H^1((k_{i,n})_w,\,\mathbb{Z}_2(i))[2^n]$ such that $cor_n^{(1)}(w_n)=y_n$. Now 
$$H^2(F_{i,n},\,\mathbb{Z}_2(i+1))[2^n]\stackrel{\partial_{F_{i,n},i}^c}{\longrightarrow}\bigoplus_{w\nmid p}H^1((k_{i,n})_w,\,\mathbb{Z}_2(i))[2^n]$$ 
is surjective by Lemma \ref{rabbercio} and therefore there exists an element $t_n\in H^2(F_{i,n},\,\mathbb{Z}_2(i+1))[2^n]$ such that $\partial_{F_{i,n},i}^c(t_n)=w_n$ and clearly $\partial_{F,i}^c(cor_n^{(2)}(t_n))=y_n$. This shows that
$$H^2(F,\,\mathbb{Z}_p(i+1))[2^n]\stackrel{\partial_{F,i}^c}{\longrightarrow} \displaystyle{\bigoplus_{v\nmid p}H^1(k_v,\,\mathbb{Z}_p(i))[2^n]}$$
is surjective which is equivalent to $\Omega^{(2)}_{i,n}=0$. 
\end{proof}

\begin{remark} It is worth noting (and I thank K. Hutchinson for pointing this out to me) that if $\mu_p\subseteq F$, then the splitting criterion of Theorem \ref{gh} is independent of $i$ (compare with Remark \ref{trieste}). In fact, in this situation, $\Gamma_n$ is a (cyclic) $p$-group and hence by Nakayama's lemma 
$$\left(Cl^{S_{i,n}}_{F_{i,n}}\otimes \mu_{p^n}^{\otimes^i}\right)_{\Gamma_{i,n}}=0 \Leftrightarrow Cl^{S_{i,n}}_{F_{i,n}}\otimes \mu_{p^n}^{\otimes^i}=0 \Leftrightarrow Cl^{S_{i,n}}_{F_{i,n}}/p^n=0 \Leftrightarrow (Cl^{S_{i,n}}_{F_{i,n}})_p=0$$
This holds in particular for $p=2$ and any nonexceptional number field $F$.
\end{remark}

\begin{remark} I don't know whether there is an isomorphism $\Omega^{(2)}_{i,n}\cong(Cl_{F_{i,n}}^{S_i,n}\otimes\mu_{2^n}^{\otimes i})_{\Gamma_{i,n}}$ for \emph{any} $n\in\mathbb{N}$.
\end{remark}

The criterion of Theorem \ref{gh} is closely related with the triviality of the so-called $i$-th \'etale wild kernel of $F$, which is denoted $W\!K^{\acute{e}t}_{2i}(F)$ and is isomorphic to $\mathrm{div}(K_{2i}(F)_p)$. Recall that, for any odd $p$ and any number field $E$, if, for any $v\mid p$, $E_v$ denotes a completion of $E$ at $v$, $W\!K^{\acute{e}t}_{2i}(E)$ is the kernel of the map
$$H^2(G_{E,\,S},\mathbb{Z}_p(i+1))\longrightarrow \bigoplus_{v\mid p}H^2_{\acute{e}t}(E_v,\,\mathbb{Z}_p(i+1))$$
which is induced by, for any $v\mid p$, an embedding of a fixed algebraic closure of $E$ in a fixed algebraic closure of $E_v$ (the kernel of the above map is actually independent of the chosen embedding).

\begin{teo}\label{TBKKSH}[Tate, Banaszak-Kolster, Keune, Schneider, \O stvaer]
Suppose that $p$ is an odd prime or $F$ is nonexceptional. For any $i\geq 1$, we have
$$\mathrm{div}(K_{2i}(F)_p)\cong W\!K^{\acute{e}t}_{2i}(F)\cong \lim_{\longleftarrow}\left(Cl^{S_{1,n}}_{F_{1,n}}\otimes\mu_{p^n}^{\otimes i}\right)_{\Gamma_n}$$
\end{teo}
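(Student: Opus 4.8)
The plan is to prove the two isomorphisms separately and then glue them. The first isomorphism $\mathrm{div}(K_{2i}(F)_p)\cong W\!K^{\acute{e}t}_{2i}(F)$ is, for $p$ odd, essentially the definition of the étale wild kernel combined with the Chern character description in Proposition \ref{ildiagramma}: under the identification of $K_{2i}(\mathcal{O}_F^S)_p$ with $H^2(G_{F,S},\mathbb{Z}_p(i+1))$ (up to the finite kernel $C_i$, which consists of divisible elements and hence does not affect $\mathrm{div}$), the subgroup of divisible elements of $K_{2i}(F)_p$ corresponds to the subgroup of $H^2(G_{F,S},\mathbb{Z}_p(i+1))$ of elements of infinite height, which Schneider's computation identifies with the kernel of localization at the primes above $p$, i.e. with $W\!K^{\acute{e}t}_{2i}(F)$. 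For $p=2$ and $F$ nonexceptional one invokes \cite{RW} so that the Chern characters are isomorphisms and the same argument goes through verbatim. I would cite \cite{Ba}, \cite{Sc}, \cite{Os} for this part rather than reproving it.

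For the second isomorphism the strategy is to pass to the limit over $n$ in Theorem \ref{gh}. Fix $i\geq 1$ and write $A_n=(Cl^{S_{1,n}}_{F_{1,n}}\otimes\mu_{p^n}^{\otimes i})_{\Gamma_n}$; note that since $\mu_p\subseteq F_{1,n}$ for $n\geq 1$, the remark following Theorem \ref{gh} shows these groups are independent of the twist $i$ used to \emph{define the field} $F_{1,n}$ — this is why the statement uses $F_{1,n}$ uniformly. The first task is to describe the transition maps: the natural maps $F_{1,n+1}\supseteq F_{1,n}$ give corestriction maps on class groups and, after tensoring with $\mu_{p^{n}}^{\otimes i}$ (via $\mu_{p^{n+1}}\to\mu_{p^n}$) and taking coinvariants, a projective system $(A_n)_n$. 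On the cohomological side, the groups $\Omega^{(p)}_{i,n}$ form a projective system under the maps induced by $p^{n+1}H^2(F,\mathbb{Z}_p(i+1))\twoheadrightarrow p^nH^2(F,\mathbb{Z}_p(i+1))$ (multiplication by $p$), and I would check that the isomorphisms $\eta_{\Gamma_{i,n}}$ of Theorem \ref{gh} (for $p$ odd, all $n$; for $p=2$, $n$ large) are compatible with these transition maps — this is a diagram chase using the functoriality of $J_{i,n}$ and of corestriction in Lemma \ref{maria} and Proposition \ref{ke}. Granting compatibility, $\varprojlim_n A_n\cong\varprojlim_n\Omega^{(p)}_{i,n}$.

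The remaining step is to identify $\varprojlim_n\Omega^{(p)}_{i,n}$ with $\mathrm{div}(K_{2i}(F)_p)$, equivalently with the subgroup of elements of infinite height in $H^2(G_{F,S},\mathbb{Z}_p(i+1))$ under the Chern character. By definition $\Omega^{(p)}_{i,n}=(H^2(G_{F,S},\mathbb{Z}_p(i+1))\cap p^nH^2(F,\mathbb{Z}_p(i+1))_p)/p^nH^2(G_{F,S},\mathbb{Z}_p(i+1))$; an element of infinite height in $H^2(G_{F,S},\mathbb{Z}_p(i+1))$ is precisely one lying in $\bigcap_n p^nH^2(G_{F,S},\mathbb{Z}_p(i+1))$, and since $H^2(G_{F,S},\mathbb{Z}_p(i+1))$ is a finitely generated $\mathbb{Z}_p$-module this divisible part is finite (its $\mathbb{Z}_p$-rank is zero because the quotient by torsion is free). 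I would then argue that the natural map from $\mathrm{div}(H^2(G_{F,S},\mathbb{Z}_p(i+1)))$ to $\varprojlim_n\Omega^{(p)}_{i,n}$ — sending a divisible element $x$ to the compatible system of its classes $x \bmod p^nH^2(G_{F,S},\mathbb{Z}_p(i+1))$, which make sense since $x\in p^nH^2(G_{F,S})\subseteq p^nH^2(F)_p$ — is an isomorphism: injectivity because a divisible element that is in $p^nH^2(G_{F,S})$ for all $n$ and maps to $0$ in every $\Omega_{i,n}$ lies in $\bigcap_n p^nH^2(G_{F,S})=\mathrm{div}$ but then its class is zero only if it is genuinely zero — here I would use that the divisible subgroup equals the first-Ulm subgroup and is complemented, or more simply the finiteness just noted plus a counting/Mittag-Leffler argument; surjectivity because a compatible system in $\varprojlim\Omega_{i,n}$ lifts, by the finiteness of each $\Omega_{i,n}$ and the surjectivity of the transition maps restricted to the images, to an element of $\bigcap_n p^nH^2(G_{F,S},\mathbb{Z}_p(i+1))$.

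The main obstacle I anticipate is precisely the compatibility of the isomorphisms $\eta_{\Gamma_{i,n}}$ with the transition maps in the two towers: the tower on the arithmetic side is governed by \emph{corestriction} $F_{1,n+1}\to F_{1,n}$ while the tower on the cohomological side is governed by \emph{multiplication by $p$} on $H^2(F,\mathbb{Z}_p(i+1))$, and reconciling these requires carefully tracking the twist maps $\mu_{p^{n+1}}^{\otimes(i+1)}\to\mu_{p^n}^{\otimes(i+1)}$ through the four-term exact sequence of Proposition \ref{ke} and through the definition of $J_{i,n}$ in Remark \ref{kum}. A secondary subtlety, only in the case $p=2$, is that the isomorphism $\Omega^{(2)}_{i,n}\cong A_n$ is available only for $n$ large; but since $\varprojlim$ is unchanged by discarding finitely many initial terms of a tower, this causes no real trouble once one knows the transition maps are eventually compatible. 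I would also remark that a cleaner, essentially formal derivation is available if one first passes to the limit in Proposition \ref{ke} itself (using that $H^2(G_S,\varprojlim)\cong\varprojlim H^2(G_S,-)$ for these finite coefficient systems by Mittag-Leffler) and then takes $\Gamma$-coinvariants; this avoids the explicit level-$n$ bookkeeping at the cost of checking the limit of a tower of coinvariants computes what one wants, which is again harmless here because the relevant homology groups $H_1(\Gamma_{i,n},-)$ vanish for $n$ large by Lemma \ref{we}.
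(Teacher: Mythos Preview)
The paper does not prove this theorem at all: its ``proof'' consists entirely of citations to Tate, Banaszak--Kolster, \O stvaer (for the first isomorphism) and Keune, Schneider, \O stvaer (for the second). So your proposal is not a comparison against an argument but against a list of references, and your approach --- deriving the statement from the paper's own Theorem~\ref{gh} by passing to the inverse limit --- is genuinely different from what the paper does.

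That said, there is a real gap in your plan. You set $A_n=(Cl^{S_{1,n}}_{F_{1,n}}\otimes\mu_{p^n}^{\otimes i})_{\Gamma_n}$ with the fields $F_{1,n}$ appearing in the theorem, and then claim that the isomorphisms $\eta_{\Gamma_{i,n}}$ of Theorem~\ref{gh} identify $A_n$ with $\Omega^{(p)}_{i,n}$. But Theorem~\ref{gh} gives $\Omega^{(p)}_{i,n}\cong(Cl^{S_{i,n}}_{F_{i,n}}\otimes\mu_{p^n}^{\otimes i})_{\Gamma_{i,n}}$, with the fields $F_{i,n}$, and these are \emph{not} the same groups as your $A_n$ at a fixed level $n$. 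The remark after Theorem~\ref{gh} that you invoke assumes $\mu_p\subseteq F$, which you are not assuming; and in general $\mathrm{Gal}(F_{1,n}/F_{i,n})$ can have order divisible by $p$ (e.g.\ when $p\mid i$), so a prime-to-$p$ codescent does not work level by level. The reconciliation of the two towers is carried out in the paper only \emph{in the limit}, in the Remark immediately following the present theorem: one shows $\Delta_{i,\infty}=\mathrm{Gal}(F_{1,\infty}/F_{i,\infty})$ has order prime to $p$ and deduces $(X'_{1,\infty}\otimes\mathbb{Z}_p(i))_{\Gamma_{1,\infty}}\cong(X'_{i,\infty}\otimes\mathbb{Z}_p(i))_{\Gamma_{i,\infty}}$. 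Your route can be salvaged by first proving $\varprojlim_n\Omega^{(p)}_{i,n}\cong\varprojlim_n(Cl^{S_{i,n}}_{F_{i,n}}\otimes\mu_{p^n}^{\otimes i})_{\Gamma_{i,n}}$ via Theorem~\ref{gh}, and then invoking that Remark to pass to the $F_{1,n}$ tower; but as written your level-by-level identification is incorrect.

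Two smaller points. First, your description of the transition maps on $\Omega^{(p)}_{i,n}$ is garbled: multiplication by $p$ goes the wrong direction, and what you actually want (for $n$ large enough that $p^nH^2(G_{F,S},\mathbb{Z}_p(i+1))=0$) is the inclusion $H^2(G_{F,S})\cap p^{n+1}H^2(F)_p\hookrightarrow H^2(G_{F,S})\cap p^nH^2(F)_p$, whose inverse limit is simply the intersection $\bigcap_n p^nH^2(F)_p\cap H^2(G_{F,S})=\mathrm{div}(H^2(F,\mathbb{Z}_p(i+1))_p)$. Second, your injectivity argument confuses divisibility in $H^2(F)_p$ with divisibility in the finite subgroup $H^2(G_{F,S})$; the point is rather that any element of $\mathrm{div}(H^2(F)_p)$ automatically lies in $H^2(G_{F,S})$ (its image in each $H^1(k_v,\mathbb{Z}_p(i))$ must vanish), and then the inverse limit of inclusions is just the intersection.
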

\begin{proof}
The first isomorphism is due to Tate (for $i=1$ and $p$ odd, see \cite{Ta2}), Banaszak and Kolster (for any $i$ and $p$ odd, see \cite{Ba}, Theorem 3) and \O stvaer (for any $i$, $p=2$ and $F$ non exceptional, see \cite{Os}, Theorem 9.5).  The second isomorphism is due to Keune or Schneider (for $p$ odd, see \cite{Ke}, Theorem 6.6, or \cite{Sc}, §6, Lemma 1) and \O stvaer (for $p=2$ and $F$ nonexceptional, see \cite{Os}, Theorem 6.1). 
\end{proof}

\begin{remark}
As we have already remarked in the introduction of this paper, the condition $\mathrm{div}(K_{2i}(F)_p)=0$ is certainly necessary for the localization sequence for $K_{2i}(F)_p$ to split since 
$$\mathrm{div}(K_{2i}(\mathcal{O}_F)_p)=\mathrm{div}\left(\bigoplus_{v\nmid p\infty}K_{2i-1}(k_v)_p\right)=0$$ 
We can check that the criterion of Theorem \ref{gh} is indeed consistent with Theorem \ref{TBKKSH}, namely we can give another proof of the fact that, if $\left(Cl_{F_{i,n}}^{S_{i,n}}\otimes\mu_{p^n}^{\otimes i}\right)_{\Gamma_{i,n}}=0$ for any $n\in\mathbb{N}$, then $\mathrm{div}(K_{2i}(F)_p)=0$. This is not difficult thanks to the description given in Theorem \ref{TBKKSH}. For any $i\geq 1$, set 
$$F_{i,\infty}=\bigcup_{n\in \mathbb{N}}F_{i,n}\quad\textrm{and}\quad\Gamma_{i,\infty}=\mathrm{Gal}(F_{i,\infty}/F)$$
Next observe that $F_{i,\infty}\subseteq F_{1,\infty}$ and using the definition of $F_{i,n}$ one easily shows that the restriction gives an isomorphism 
$$\Gamma_{1,\infty}=\Gamma_{i,\infty}\times \Delta_{i,\infty}$$ 
where $\Delta_{i,\infty}=\mathrm{Gal}(F_{1,\infty}/F_{i,\infty})=\langle\gamma\in \Gamma_{1,\infty}\,|\, \gamma^i=1\rangle$. In particular $\Delta_{i,\infty}$ is finite of order coprime with $p$ (since $\Gamma_{1,n}\cong \mathbb{Z}_p\times \mathrm{Gal}(F(\mu_p)/F)$ and $\mathbb{Z}_p$ has no nontrivial finite subgroups) and acts trivially on $\mathbb{Z}_p(i)$. Now set 
$$X'_{i,\infty}=\lim_{\longleftarrow}Cl^{S_{i,n}}_{F_{i,n}}$$
and note that 
$$\left(X'_{i,\infty}\otimes\mathbb{Z}_p(i)\right)_{\Gamma_{i,\infty}}=\lim_{\longleftarrow}\left(Cl^{S_{i,n}}_{F_{i,n}}\otimes\mu_{p^n}^{\otimes i}\right)_{\Gamma_{i,n}}$$
We have $(X'_{1,\infty})_{\Delta_{i,\infty}}=X'_{i,\infty}$ since $\Delta_{i,\infty}$ has order coprime with $p$ and therefore
$$\mathrm{div}(K_{2i}(F)_p)\cong\left(X'_{1,\infty}\otimes\mathbb{Z}_p(i)\right)_{\Gamma_{1,\infty}}=\left(\left(X'_{1,\infty}\right)_{\Delta_{i,\infty}}\otimes\mathbb{Z}_p(i)\right)_{\Gamma_{i,\infty}}=\left(X'_{i,\infty}\otimes\mathbb{Z}_p(i)\right)_{\Gamma_{i,\infty}}$$
which shows that, if $\left(Cl_{F_{i,n}}^{S_{i,n}}\otimes\mu_{p^n}^{\otimes i}\right)_{\Gamma_{i,n}}=0$ for any $n\in\mathbb{N}$, then $\mathrm{div}(K_{2i}(F)_p)=0$. We will see in the next section (Example \ref{ce}) that the converse of this statement is not true in general: in other words, for any $i\geq 1$, there exists a prime number $p$ and a number field $F$ such that the localization sequence for $K_{2i}(F)_p$ does not split but $\mathrm{div}(K_{2i}(F)_p)=0$.
\end{remark}

\section{Examples.}\label{examples}
To begin with we analyze the simplest case, namely $F=\mathbb{Q}$. Let $p$ be an odd prime and let $A_{i,n}$ denote the $p$-Sylow subgroup of $Cl_{F_{i,n}}^{S_{i,n}}$ where $F_{i,n}=\mathbb{Q}(\mu_{p^n}^{\otimes i})$. Let $K_{1,n}$ be the $n$-th level of the cyclotomic $\mathbb{Z}_p$-extension of $\mathbb{Q}$. Set $\Delta_{1,n}=\mathrm{Gal}(F_{1,n}/K_n)$ and for every $j\in \mathbb{Z}$, let $A_{1,n}^{(j)}$ denote the $\omega^j$-component of $A_{1,n}$ where $\omega:\Delta_n\to \mathbb{Z}_p^\times$ denotes the Teichm\"uller character. 
We need the following well known result.

\begin{teo}[Kurihara]\label{trivial}
Let $p$ be an odd prime. For any $i\geq 1$, 
$$H^2(G_{\mathbb{Q},\,S},\mathbb{Z}_p(i+1))=0 \Longleftrightarrow A^{(-i)}=0$$
where $A$ is the $p$-Sylow of the class group of $\mathbb{Q}(\mu_p)$. In particular, the triviality of $H^2(G_{\mathbb{Q},\,S},\mathbb{Z}_p(i+1))$ only depends on the class of $i$ modulo $p-1$.
\end{teo}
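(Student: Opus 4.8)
The plan is to reduce the statement modulo $p$ and then descend to $K:=\mathbb{Q}(\mu_p)$, where the relevant cohomology becomes an $\omega$-eigenspace of $A/p$. First I would invoke that $G_{\mathbb{Q},\,S}$ has $p$-cohomological dimension at most $2$ (as used in the proof of Lemma \ref{pn}), so that the exact sequence $0\to\mathbb{Z}_p(i+1)\stackrel{p}{\longrightarrow}\mathbb{Z}_p(i+1)\to\mu_p^{\otimes i+1}\to0$ gives $H^2(G_{\mathbb{Q},\,S},\,\mathbb{Z}_p(i+1))/p\cong H^2(G_{\mathbb{Q},\,S},\,\mu_p^{\otimes i+1})$. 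Since $H^2(G_{\mathbb{Q},\,S},\,\mathbb{Z}_p(i+1))$ is finitely generated over $\mathbb{Z}_p$, Nakayama's lemma shows that $H^2(G_{\mathbb{Q},\,S},\,\mathbb{Z}_p(i+1))=0$ if and only if $H^2(G_{\mathbb{Q},\,S},\,\mu_p^{\otimes i+1})=0$, so it suffices to compute this mod-$p$ group.

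Next I would descend to $K=\mathbb{Q}(\mu_p)$, setting $\Delta=\mathrm{Gal}(K/\mathbb{Q})$, which has order $p-1$ prime to $p$. Hochschild--Serre then yields $H^2(G_{\mathbb{Q},\,S},\,\mu_p^{\otimes i+1})=H^2(G_{K,\,S},\,\mu_p^{\otimes i+1})^{\Delta}$. Over $K$ the module $\mu_p^{\otimes i}$ is $G_K$-trivial, one-dimensional over $\mathbb{F}_p$, with $\Delta$ acting through $\omega^i$; hence $H^2(G_{K,\,S},\,\mu_p^{\otimes i+1})\cong H^2(G_{K,\,S},\,\mu_p)\otimes\omega^i$ as $\Delta$-modules, and taking $\Delta$-invariants extracts the $\omega^{-i}$-eigenspace $H^2(G_{K,\,S},\,\mu_p)^{(-i)}$.

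It then remains to identify $H^2(G_{K,\,S},\,\mu_p)$. The Kummer-theory exact sequence for $H^2(G_{K,\,S},\,\mu_p)$ (equivalently the case $i=0$ of Proposition \ref{ke}) reads $0\to Cl_K^S/p\to H^2(G_{K,\,S},\,\mu_p)\to\bigoplus_{w\mid p}\mathbb{Z}/p\stackrel{c}{\longrightarrow}\mathbb{Z}/p\to0$; since $K=\mathbb{Q}(\mu_p)$ has a single prime above $p$ (which is moreover principal, so $Cl_K^S=Cl_K$), the map $c$ is an isomorphism and the preceding map is zero, whence $H^2(G_{K,\,S},\,\mu_p)\cong Cl_K/p=A/p$ as $\Delta$-modules. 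Combining, $H^2(G_{\mathbb{Q},\,S},\,\mu_p^{\otimes i+1})\cong(A/p)^{(-i)}=A^{(-i)}/p$, and applying Nakayama's lemma once more to the finite group $A^{(-i)}$ gives $H^2(G_{\mathbb{Q},\,S},\,\mathbb{Z}_p(i+1))=0$ if and only if $A^{(-i)}=0$. The final assertion is then immediate: $\omega$ has order $p-1$, so $A^{(-i)}$ depends only on $i$ modulo $p-1$.

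I do not expect a serious obstacle, since the argument only glues together standard ingredients (the bound on $p$-cohomological dimension, Hochschild--Serre for a prime-to-$p$ quotient, and the Kummer exact sequence), but the step demanding the most care is the Tate-twist and eigenspace bookkeeping in the descent to $K$: one must check that $H^2(G_{K,\,S},\,\mu_p)\cong A/p$ is genuinely $\Delta$-equivariant and that $\Delta$-invariants of the twisted module $H^2(G_{K,\,S},\,\mu_p)\otimes\omega^i$ really select the $\omega^{-i}$-component. One should also make sure the $i=0$ instance of the sequence of Proposition \ref{ke} is available in the required form; if not, it can be replaced directly by the classical sequence $0\to Cl_K^S/p\to H^2(G_{K,\,S},\,\mu_p)\to\mathrm{Br}(\mathcal{O}_K^S)[p]\to0$ coming from $0\to\mu_p\to\mathbb{G}_m\stackrel{p}{\longrightarrow}\mathbb{G}_m\to0$ on $\mathrm{Spec}\,\mathcal{O}_K^S$.
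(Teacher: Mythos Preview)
Your argument is correct. The paper does not actually prove this result but simply cites Kurihara (\cite{Ku}, Corollary~1.5), so there is no in-paper proof to compare against step by step. Your route---reducing modulo $p$ via $cd_p(G_{\mathbb{Q},S})\le 2$ and Nakayama, collapsing Hochschild--Serre along the prime-to-$p$ quotient $\Delta=\mathrm{Gal}(\mathbb{Q}(\mu_p)/\mathbb{Q})$, and then identifying $H^2(G_{K,S},\mu_p)$ with $A/p$ through the Kummer/Brauer sequence---is the standard elementary derivation and uses only ingredients already assembled in the paper (the cohomological-dimension input of Lemma~\ref{pn}, the description in Remark~\ref{kum}, and the $i=0$ analogue of Proposition~\ref{ke}, for which your fallback via $0\to\mu_p\to\mathbb{G}_m\stackrel{p}{\to}\mathbb{G}_m\to0$ on $\mathrm{Spec}\,\mathcal{O}_K[1/p]$ is exactly right, since Proposition~\ref{ke} is stated only for $i\ge 1$). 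The eigenspace bookkeeping you flag as the delicate point is indeed fine: over $K$ one has $\mu_p^{\otimes i+1}\cong\mu_p\otimes\omega^i$ as $\Delta$-modules, $(M\otimes\omega^i)^\Delta=M^{(-i)}$ for any $\mathbb{F}_p[\Delta]$-module $M$, and the injection $Cl_K/p\hookrightarrow H^2(G_{K,S},\mu_p)$ is $\Delta$-equivariant by naturality of the connecting homomorphism (cf.\ Remark~\ref{kum}). The payoff of writing this out rather than citing is that the theorem becomes self-contained within the paper's own toolkit; the cost is only that one must justify finiteness (or at least finite generation over $\mathbb{Z}_p$) of $H^2(G_{\mathbb{Q},S},\mathbb{Z}_p(i+1))$ to invoke Nakayama, which is standard.
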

\begin{proof}
See \cite{Ku}, Corollary 1.5.
\end{proof}


\begin{example}\label{qe}
Let $F$ be the field of rational numbers. Since $\mathrm{Gal}(F_{1,n}/F_{i,n})$ acts trivially on $\mu_{p^n}^{\otimes i}$, for every $n\geq 1$ we have 
$$\left(A_{1,n}\otimes \mu_{p^n}^{\otimes^i}\right)_{\Gamma_{1,n}}=\left(\left(A_{1,n}\right)_{\mathrm{Gal}(F_{1,n}/F_{i,n})}\otimes\mu_{p^n}^{\otimes^i}\right)_{\Gamma_{i,n}}=\left(A_{i,n}\otimes\mu_{p^n}^{\otimes^i}\right)_{\Gamma_{i,n}}$$ 
the last equality coming from the fact that there is only one ramified prime in $F_{1,n}/\mathbb{Q}$ and it is totally ramified (use \cite{Wa}, Proposition 13.22). By Nakayama's lemma 
$$\left(\left(A_{1,n}\otimes\mu_{p^n}^{\otimes^i}\right)_{\Delta_{1,n}}\right)_{\mathrm{Gal}(K_{1,n}/\mathbb{Q})}=0\Longleftrightarrow\left(A_{1,n}\otimes \mu_{p^n}^{\otimes^i}\right)_{\Delta_{1,n}}=0$$
Furthermore 
$$\left(A_{1,n}\otimes \mu_{p^n}^{\otimes^i}\right)_{\Delta_{1,n}}\cong A_{1,n}^{(-i)}$$
Moreover, it is easy to see that for any $n\geq 1$
$$A_{1,n}^{(-i)}=0 \Longleftrightarrow A_{1,1}^{(-i)}=0$$
Hence the localization sequence for $K_{2i}(\mathbb{Q})_p$ splits if and only if $A_{1,1}^{(-i)}$ is trivial. Therefore, by Theorem \ref{trivial}, the localization sequence for $K_{2i}(\mathbb{Q})_p$ splits if and only if $H^2(G_{\mathbb{Q},\,S},\mathbb{Z}_p(i+1))=0$ (the latter is equivalent to $K_{2i}(\mathbb{Z})_p=0$ under the Quillen-Lichtenbaum conjecture). Moreover, it can be easily proved that $W\!K_{2i}^{\acute{e}t}(\mathbb{Q})\cong H^2_{\acute{e}t}(G_{\mathbb{Q},\,S},\mathbb{Z}_p(i+1))$: therefore in this case the triviality of $\mathrm{div}(K_{2i}(\mathbb{Q})_p)$ is a necessary and sufficient condition for the localization sequence for $K_{2i}(\mathbb{Q})_p$ to split. 
\end{example}

\noindent
Anyway in general the condition $W\!K^{\acute{e}t}_{2i}(F)=0$ is weaker than the condition of Theorem \ref{gh}, as we will show in the next example. First we need the following criterion.

\begin{prop}\label{jau}
Let $p$ be an odd prime and let $F/\mathbb{Q}$ be finite Galois extension such that 
\begin{itemize}
	\item $\mu_{p}\subseteq F$;
	\item $(Cl^{S}_F)_p\cong \mathbb{Z}/p\mathbb{Z}$;
	\item there is only one prime above $p$ in $F$;
	\item $F(\mu_{p^2})/F$ is a nontrivial extension where every prime over $p$ is totally split.
\end{itemize}
Then, for any $i\geq 1$, $W\!K^{\acute{e}t}_{2i}(F)_p$ is trivial but the localization sequence for $K_{2i}(F)_p$ does not split.  
\end{prop}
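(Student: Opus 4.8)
The plan is to combine Theorem~\ref{gh} with Theorem~\ref{TBKKSH}, exploiting the special arithmetic hypotheses on $F$. First, I would establish that the localization sequence for $K_{2i}(F)_p$ does not split. Since $\mu_p\subseteq F$ and $(Cl^S_F)_p\cong\mathbb{Z}/p\mathbb{Z}\ne 0$, I can apply Remark~\ref{trieste}: because $\mu_p\subseteq F$ means $\mu_{p^n}\subseteq F$ for $n=1$ (a positive integer) and $(Cl^S_F)_p\ne 0$, the localization sequence for $K_{2i}(F)_p$ fails to split for every $i\geq 1$. Alternatively, and more in the spirit of the main theorem, I would note that by the remark following Theorem~\ref{gh} the splitting criterion is independent of $i$ when $\mu_p\subseteq F$, so it suffices to observe that $\Omega^{(p)}_{i,1}\cong(Cl^{S_{i,1}}_{F_{i,1}}\otimes\mu_p^{\otimes i})_{\Gamma_{i,1}}$ is nonzero: since $\mu_p\subseteq F$ and $p$ is odd, $F_{i,1}=F$, $\Gamma_{i,1}$ is trivial, and this group is just $(Cl^S_F)_p\otimes\mu_p^{\otimes i}\cong\mathbb{Z}/p\mathbb{Z}\ne 0$.

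Next I would show $W\!K^{\acute{e}t}_{2i}(F)_p=0$. By Theorem~\ref{TBKKSH}, $W\!K^{\acute{e}t}_{2i}(F)\cong\varprojlim(Cl^{S_{1,n}}_{F_{1,n}}\otimes\mu_{p^n}^{\otimes i})_{\Gamma_n}$. The strategy is to prove that $(Cl^{S_{1,n}}_{F_{1,n}}\otimes\mu_{p^n}^{\otimes i})_{\Gamma_n}=0$ for all $n\geq 2$; then the inverse limit vanishes. Since $\mu_p\subseteq F$, the extension $F_{1,\infty}=F(\mu_{p^\infty})$ is a $\mathbb{Z}_p$-extension of $F$ in which only primes above $p$ ramify, and by hypothesis there is a unique prime above $p$ in $F$, which is totally split in $F(\mu_{p^2})/F$ and hence (the $\mathbb{Z}_p$-extension being totally ramified at that prime or not) — here I must be careful: the unique prime above $p$ is the only prime that can ramify, so $F_{1,\infty}/F$ is ramified exactly at that prime and, since $F(\mu_{p^2})/F$ is totally split there, the ramification index is $1$, forcing $F_{1,\infty}=F$, a contradiction unless I interpret the splitting condition correctly. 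The resolution is that the decomposition of the prime above $p$ in the tower is what matters: I would use the exact hypothesis ``$F(\mu_{p^2})/F$ is a nontrivial extension where every prime over $p$ is totally split'' to conclude that the Frobenius at $p$ acts trivially, and then analyze $(Cl^{S_{1,n}}_{F_{1,n}})_p$ via Iwasawa-theoretic codescent. Concretely, with $X'_{1,\infty}=\varprojlim Cl^{S_{1,n}}_{F_{1,n}}$, I have $\mathrm{div}(K_{2i}(F)_p)\cong(X'_{1,\infty}\otimes\mathbb{Z}_p(i))_{\Gamma_{1,\infty}}$, and I would show this vanishes using that $(Cl^S_F)_p\cong\mathbb{Z}/p\mathbb{Z}$ is killed when passing up the ramified $\mathbb{Z}_p$-extension (the single ramified prime makes the norm maps in the tower well-understood), together with a twist/Nakayama argument.

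The main obstacle will be the precise Iwasawa-theoretic bookkeeping in the second part: reconciling the four hypotheses — a single prime above $p$, $p$-class group exactly $\mathbb{Z}/p\mathbb{Z}$, and a nontrivial $F(\mu_{p^2})/F$ that is split above $p$ — to force $\varprojlim(Cl^{S_{1,n}}_{F_{1,n}}\otimes\mu_{p^n}^{\otimes i})_{\Gamma_n}=0$. The heuristic is that the capitulation/norm behavior in the $\mathbb{Z}_p$-extension $F_{1,\infty}/F$ makes the nonzero class in $(Cl^S_F)_p$ become divisible (capitulate or get absorbed) at finite level already, because the relevant prime splits; so the module of universal norms, which computes $W\!K^{\acute{e}t}_{2i}$, is trivial even though the descent to level $F$ gives a nonzero obstruction $\Omega^{(p)}_{i,1}$. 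I would likely invoke a genus-theory or Chevalley-formula computation for $Cl^{S_{1,n}}_{F_{1,n}}$ in terms of $Cl^S_F$ and ramification in the tower, twist by $\mu_{p^n}^{\otimes i}$, and take $\Gamma_n$-coinvariants, checking the inverse system stabilizes at $0$. Once both parts are in place, the proposition follows by combining Theorem~\ref{gh} (nonsplitting) with Theorem~\ref{TBKKSH} (triviality of the \'etale wild kernel).
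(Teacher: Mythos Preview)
Your argument for nonsplitting is correct and is exactly what the paper does: since $\mu_p\subseteq F$ and $(Cl^S_F)_p\ne 0$, Remark~\ref{trieste} (or Theorem~\ref{gh} at level $n=1$) immediately gives nonsplitting for every $i$.

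The gap is in the second half. You propose to prove $W\!K^{\acute{e}t}_{2i}(F)=0$ by showing, via Theorem~\ref{TBKKSH}, that $(Cl^{S_{1,n}}_{F_{1,n}}\otimes\mu_{p^n}^{\otimes i})_{\Gamma_n}=0$ for all $n\geq 2$. Because $\mu_p\subseteq F$, $\Gamma_n$ is a $p$-group and Nakayama makes this equivalent to $(Cl^{S_{1,n}}_{F_{1,n}})_p=0$ for all $n\geq 2$. But nothing in the hypotheses forces this, and you never actually establish it; your own paragraph wavers between ``the prime is ramified'' and ``the prime is split'' without resolving the decomposition behaviour in the tower, and the ``genus-theory/Chevalley'' plan is left as a heuristic. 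Note too that the vanishing of the inverse limit does not require the individual terms to vanish, so the strategy you announce may be aiming at something strictly stronger than what is needed (and possibly false in the example).

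The paper avoids this Iwasawa bookkeeping entirely by using Jaulent's logarithmic class group $\widetilde{Cl}_F$. There is an exact sequence
\[
0\to \widetilde{Cl}_F(p)\to \widetilde{Cl}_F\to (Cl^S_F)_p\to \mathrm{deg}_F\mathcal{D}\ell/(\mathrm{deg}_F\mathfrak{p})\mathbb{Z}_p\to 0,
\]
and the four hypotheses are tailored to it: the third gives $\widetilde{Cl}_F(p)=0$; the first and fourth force the rightmost term to be $\mathbb{Z}/p^s\mathbb{Z}$ with $s\geq 1$ (this is precisely where ``split at $p$ in $F(\mu_{p^2})/F$'' enters, via the local cyclotomic degree); and the second then forces $s=1$ and $\widetilde{Cl}_F=0$. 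Finally the isomorphism $\mu_p^{\otimes i}\otimes\widetilde{Cl}_F\cong W\!K^{\acute{e}t}_{2i}(F)/p$ from \cite{JM} gives $W\!K^{\acute{e}t}_{2i}(F)=0$. This packages exactly the Iwasawa-theoretic content you were reaching for (the logarithmic class group controls decomposition in the cyclotomic $\mathbb{Z}_p$-extension), and it turns the four hypotheses into a two-line cardinality count rather than a tower computation.
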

\begin{proof}
We are going to use Jaulent's theory of logarithmic classes: for notation and basic results the reader is referred to \cite{Ja}. For any $F/\mathbb{Q}$ finite and Galois (even not satisfying the hypotheses) we have an exact sequence (see \cite{DS}, §3) 
\begin{equation}\label{seds}
0 \to \widetilde{Cl}_{F}(p)\to \widetilde{Cl}_{F} \stackrel{\varphi}{\longrightarrow} (Cl^{S}_F)_p\longrightarrow \mathrm{deg}_F \mathcal{D}\ell/(\mathrm{deg}_{F}\mathfrak{p})\mathbb{Z}_p\to 0
\end{equation} 
where $\mathfrak{p}$ is any prime of $F$ over $p$. Moreover 
$$
\mathrm{deg}_F \mathcal{D}\ell=p[F\cap\widehat{\mathbb{Q}}^c:\,\mathbb{Q}]\mathbb{Z}_p\quad\textrm{and}\quad\mathrm{deg}_{F}\mathfrak{p}=\tilde{f}_{\mathfrak{p}}\cdot \mathrm{deg}\, p=[F_{\mathfrak{p}}\cap\widehat{\mathbb{Q}}^c_p:\,\mathbb{Q}_p]\cdot p
$$
where $\widehat{\mathbb{Q}}^{c}$ (resp. $\widehat{\mathbb{Q}}^c_p$) is the cyclotomic $\widehat{\mathbb{Z}}$-extension of $\mathbb{Q}$ (resp. $\mathbb{Q}_p$). Now we want to compare $[F\cap\widehat{\mathbb{Q}}^c:\,\mathbb{Q}]$ and $[F_{\mathfrak{p}}\cap\widehat{\mathbb{Q}}^c_p:\,\mathbb{Q}_p]$. We have $v_p([F\cap\widehat{\mathbb{Q}}^c:\,\mathbb{Q}])=0$ thanks to the first and the fourth hypothesis. The fourth hypothesis also implies that $v_p([F_{\mathfrak{p}}\cap\widehat{\mathbb{Q}}^c_p:\,\mathbb{Q}_p])=s\geq 1$.  
In other words 
$$\mathrm{deg}_F \mathcal{D}\ell/(\mathrm{deg}_{F}\mathfrak{p})\mathbb{Z}_{p}\cong \mathbb{Z}/p^s\mathbb{Z}$$ 
with $s\geq 1$. Now the third hypothesis implies $\widetilde{Cl}(p)=0$ (see \cite{DS}, Lemma 4): therefore (\ref{seds}) implies that $s=1$ and $\widetilde{Cl}_F=0$ because of the second hypothesis. Since $\mu_p\subseteq F$, we can use the isomorphism (see \cite{JM}, Théorème 3) 
$$\mu_{p}^{\otimes i}\otimes\widetilde{Cl}_F\cong W\!K^{\acute{e}t}_{2i}(F)/pW\!K^{\acute{e}t}_{2i}(F)$$
to deduce that $W\!K_{2i}^{\acute{e}t}(F)=0$. On the other hand $(Cl^{S}_F)_p$ is nontrivial, hence Theorem \ref{gh} (or already Remark \ref{trieste}) tells us that the localization sequence for $K_{2i}(F)_p$ does not split.
\end{proof}


\noindent
\begin{example}\label{ce} 
We have to find a number field $F$ and an odd prime $p$ satisfying the hypotheses of Proposition \ref{jau}. We proceed as follows: we choose an odd prime $p$ and a prime $\ell$ such that $\ell\equiv 1\,\,(mod \,\,p)$: this ensures that $\mathbb{Q}(\zeta_\ell)$ has exactly one subextension of degree $p$ which we call $E$. Let $K$ be the subextension of degree $p$ of $\mathbb{Q}(\mu_{p^2})$: then $EK$ is an abelian number field whose Galois group is isomorphic to $(\mathbb{Z}/p\mathbb{Z})^2$. Let $F'$ be a subextension of degree $p$ of $EK$ which is different from $E$ and $K$. Now, if the order of $p$ modulo $\ell$ is not divisible by $p$, then $E$ has to be totally split at $3$. In particular, $EK/F'$ is totally split at $p$ and $F'/\mathbb{Q}$ has only one prime above $p$ (which is totally ramified). We may then choose $F=F'(\mu_p)$: then the first, the third and the fourth hypotheses of Proposition \ref{jau} are satisfied. So we are left to find such a prime $\ell$ with the additional requirement that $(Cl^{S}_F)_p$ is cyclic of order $p$.\\ 
Choose $p=3$ and $\ell=61$: of course we have $61\equiv 1\,\,(mod\,\, 3)$ and $3$ has order $10$ modulo $61$. We only have one choice for $F'$ and computations with PARI (\cite{PA}) reveal that $F=F'(\mu_3)=\mathbb{Q}(\theta)$ where $\theta$ is a root of the polynomial $X^6-793X^3+226981$. We clearly have only one (totally ramified) prime above $3$ in $F$ and furthermore $(Cl^{S}_F)_3\cong \mathbb{Z}/3\mathbb{Z}$. Then by Proposition \ref{jau}, for any $i\geq1$, we deduce that $W\!K^{\acute{e}t}_ {2i}(F)_3=0$ but the localization sequence for $K_{2i}(F)_3$ does not split. 
\end{example}

\begin{remark} (i) It seems reasonable to conjecture that, for any $i\geq 1$ and any rational prime $p$, there exist infinitely many number fields $F$ such that the localization sequence for $K_{2i}(F)_p$ does not split but $\mathrm{div}(K_{2i}(F)_p)=0$.\\
	(ii) As we have seen there exist a number field $F$ and a prime $p$ such that (for any $i\geq 1$) $\mathrm{div}(K_{2i}(F)_p)=0$ but the localization sequence for $K_{2i}(F)_p$ does not split. However, for any number field $F$, any prime $p$ and any $i\geq 1$, we have $\mathrm{div}(K_{2i}(F)_p)=0$ if and only if $K_{2i}(F)_p$ is isomorphic to a direct sum of finite cyclic groups. This follows from a theorem of Pr\"ufer (see \cite{Ka}, Theorem 11).
\end{remark}

\noindent
\textsc{Luca Caputo}\\
School of Mathematical Sciences\\
University College Dublin\\
Belfield, Dublin 4, Ireland\\
luca.caputo@ucd.ie


\begin{thebibliography}{999}
\linespread{0.7}
\begin{footnotesize}

\bibitem{Ba}\textsc{G. Banaszak}, \emph{Generalization of the Moore exact sequence and the wild kernel for higher K-groups}, Compositio Math. 86 no. 3 (1993), 281-305.

\bibitem{DS}\textsc{F. Diaz y Diaz and F. Soriano-Gafiuk}, \emph{Approche algorithmique du groupe des classes logarithmiques}, J. Number Theory 76 (1999), 1-15.

\bibitem{DF}\textsc{W.G. Dwyer and E.M. Friedlander}, \emph{Algebraic and étale $K$-theory}, Trans. Am. Math. Soc. 292 no. 1. (1985), 247-280.

\bibitem{Hu}\textsc{K. Hutchinson}, \emph{The 2-Sylow subgroup of the wild kernel of exceptional number fields}, J. Number Theory 87 (2001), 222-238.


\bibitem{Ja}\textsc{J.-F. Jaulent}, \emph{Classes logarithmiques des corps de nombres}, J. Théor. Nombres Bordeaux 6 (1994), 301-325.

\bibitem{JM}\textsc{J.-F. Jaulent and A. Michel}, \emph{Approche logarithmique des noyaux étales sauvages des corps de nombres}, J. Number Theory  120 no. 1 (2006), 72-91.

%

\bibitem{Ka}\textsc{I. Kaplansky}, \emph{Infinite abelian groups}, University of Michigan Publications n. 2, 1954.

\bibitem{Ke}\textsc{F. Keune}, \emph{On the structure of the $K_2$ of the ring of integers in a number field}, $K$-Theory 2 no. 5 (1989), 625-645.

%
\bibitem{Ku}\textsc{M. Kurihara}, \emph{Some remarks on conjectures about cyclotomic fields and $K$-groups of $\mathbb{Z}$}, Compositio Math. 81 no. 2 (1992), 223-236. 

\bibitem{Mi1}\textsc{J. Milnor}, \emph{Algebraic $K$-theory and quadratic forms}, Invent. Math. 9 (1969/70), 318-344.

\bibitem{Mi2}\textsc{J. Milnor}, \emph{Introduction to algebraic $K$-theory}, Priceton University Press and Tokyo University Press, 1971.

\bibitem{NSW}\textsc{J. Neukirch, A. Schmidt, K. Wingberg}, \emph{Cohomology of number fields}, Grundlehren der mathematischen Wissenschaften 323, Springer-Verlag, 2000.

\bibitem{Os}\textsc{P. A. \O stvaer}, \emph{Wild kernels at the prime $2$}, Acta Arith.  112  no. 2 (2004), 109-130.

\bibitem{PA} \textsc{PARI/GP, version 2.3.2}, Bordeaux, 2007, http://pari.math.u-bordeaux.fr/.

%

\bibitem{RW}\textsc{J. Rognes and C. Weibel}, \emph{\'Etale descent for two-primary algebraic $K$-theory of totally imaginary number fields}, $K$-Theory 16 no. 2 (1999), 101-104.

\bibitem{Sc}\textsc{P. Schneider}, \emph{Über gewisse Galoiscohomologiegruppen}, Math. Z. 168 no. 2 (1979), 181-205.

\bibitem{So}\textsc{C. Soulé}, \emph{$K$-théorie des anneaux d'entiers des corps de nombres et cohomologie étale}, Invent. Math. 55 (1979), 251-295.

\bibitem{Ta1}\textsc{J. Tate}, \emph{Symbols in Arithmetic}, in \emph{Actes du Congrès International des Mathématiciens (Nice, 1970)}, Tome 1, Gauthier-Villars 1971, 201-211.

\bibitem{Ta2}\textsc{J. Tate}, \emph{Relation between $K_2$ and Galois cohomology}, Invent. Math. 36 (1976), 257-274. 




\bibitem{Wa}\textsc{L. C. Washington}, \emph{Introduction to cyclotomic fields}, GTM 83, Springer-Verlag 1997.

\bibitem{We1}\textsc{C. Weibel}, \emph{Étale Chern classes at the prime $2$}, in \emph{Algebraic $K$-theory and algebraic topology (Lake Louise, AB, 1991)}, NATO Adv. Sci. Inst. Ser. C Math. Phys. Sci., 407, Kluwer Acad. Publ., 1993, 249-286. 
\bibitem{We2}\textsc{C. Weibel}, \emph{Higher wild kernels and divisibility in the $K$-theory of number fields}, J. Pure Appl. Algebra 206 no. 1-2 (2006), 222-244.

\end{footnotesize}
\end{thebibliography}
\end{document}